\def\gc{\overset{\text{GC}}{\rightarrow}}  
\def\ngc{\overset{\text{GC}}{\nrightarrow}}  
\def\pwgc{\overset{\text{PW}}{\rightarrow}}  
\def\npwgc{\overset{\text{PW}}{\nrightarrow}}  
\def\gcg{\mathcal{G}}  
\def\gcge{\mathcal{E}}  
\def\VAR{\mathsf{VAR}}  
\def\MA{\mathsf{MA}}  
\def\B{\mathsf{B}}  
\def\A{\mathsf{A}}  
\def\H{\mathcal{H}}  
\newcommand{\linE}[2]{\hat{\E}[#1\ |\ #2]}  
\newcommand{\linEerr}[2]{\xi[#1\ |\ #2]}  
\newcommand{\pa}[1]{pa(#1)}  
\newcommand{\anc}[1]{\mathcal{A}(#1)}  
\newcommand{\ancn}[2]{\mathcal{A}_{#1}(#2)}  
\newcommand{\gpn}[2]{gp_{#1}(#2)}  
\newcommand{\wtalpha}[2]{\widetilde{\alpha}(#1, #2)}  
\newcommand{\gcgpath}[2]{#1 \rightarrow \cdots \rightarrow #2}  
\providecommand{\keywords}[1]{\textbf{\textit{Keywords---}} #1}
\theoremstyle{plain}  %
\newtheorem{theorem}{Theorem}
\newtheorem{corollary}{Corollary}
\newtheorem{proposition}{Proposition}
\newtheorem{lemma}{Lemma}
\theoremstyle{definition}
\newtheorem{remark}{Remark}
\newtheorem{definition}{Definition}
\newtheorem{example}{Example}
\newtheorem{assumption}{Assumption}
\providecommand{\keywords}[1]{\textbf{\textit{Keywords---}} #1}
\def\defeq{\overset{\Delta}{=}}  %
\def\cl{\mathsf{cl\ }}  %
\def\ln{\mathsf{ln\ }}  %
\DeclarePairedDelimiter{\ceil}{\lceil}{\rceil}  %
\def\H{\mathcal{H}}  %
\def\E{\mathbb{E}}  %
\def\P{\mathbb{P}}  %
\def\Z{\mathbb{Z}}  %
\def\R{\mathbb{R}}  %
\def\N{\mathbb{N}}  %
\def\tr{\mathsf{tr }}  %
\def\T{\mathsf{T}}  %
\def\c{\mathsf{c}}  %
\def\Dg{\mathsf{Dg }}   %
\def\<{\langle}  %
\def\>{\rangle}  %
\newcommand{\inner}[2]{\langle #1, #2 \rangle}  %
\title{Graph Topological Aspects of Granger Causal Network Learning}
\begin{document}

\author{R. J. Kinnear\footnote{Correspondence: Ryan@Kinnear.ca.  Code: github.com/RJTK/granger\_causality} }

\author{R. R. Mazumdar}
\affil{University of Waterloo\\ 200 University Avenue West Waterloo, Ontario, Canada, N2L 3G1\\ Department of Electrical and Computer Engineering}

\maketitle

\abstract{We study Granger causality in the context of wide-sense
  stationary time series, where our focus is on the topological
  aspects of the underlying causality graph.  We establish sufficient
  conditions (in particular, we develop the notion of a ``strongly
  causal'' graph topology) under which the true causality graph can be
  recovered via pairwise causality testing alone, and provide examples
  from the gene regulatory network literature suggesting that our
  concept of a strongly causal graph may be applicable to this field.
  We implement and detail finite-sample heuristics derived from our
  theory, and establish through simulation the efficiency gains (both
  statistical and computational) which can be obtained (in comparison
  to LASSO-type algorithms) when structural assumptions are met.}

\keywords{causality graph, Granger causality, network learning, time
  series, vector autoregression, LASSO}

\paragraph{Acknowledgement}

We acknowledge the support of the Natural Sciences and Engineering Research Council of Canada (NSERC), [funding reference number 518418-2018].  Cette recherche a été financée par le Conseil de recherches en sciences naturelles et en génie du Canada (CRSNG), [numéro de référence 518418-2018].

\section{Introduction and Review}
\label{sec:introduction}
In this paper we study the notion of Granger causality
\cite{granger1969investigating} \cite{Granger1980329} as a means of
uncovering an underlying causal structure in multivariate time series.
Though the underlying causality graph cannot be observed directly,
it's presence is inferred as a latent structure among observed time
series data.  This notion is leveraged in a variety of applications
e.g. in Neuroscience as a means of recovering interactions amongst
brain regions \cite{bressler2011wiener}, \cite{anna_paper2008},
\cite{david2008identifying}; in the study of the dependence and
connectedness of financial institutions \cite{NBERw16223}; gene
expression networks \cite{Fujita2007},
\cite{methods_for_inferring_gene_regulatory_networks_from_time_series_expression_data},
\cite{grouped_graphical_granger_modelling_for_gene_expression_regulatory_networks_discovery},
\cite{discovering_graphical_Granger_causality_using_the_truncating_lasso_penalty};
and power system design \cite{Misyrlis2016450}, \cite{yuan2014root}.

Granger causality can generally be formulated by searching for the
``best'' graph structure consistent with observed data, which is in
general an extremely challenging problem (i.e. it may be framed as a
best subset selection problem, see \cite{hastie_bss_comp} for recent
improvements in BSS methods), moreover, the comparison of quality
between different structures, and hence the notion of ``best'' needs
qualification.  In applications where we are interested merely in
minimizing the mean squared error of a linear one-step-ahead
predictor, then we will be satisfied with an entirely dense graph of
connections, since each edge can only serve to reduce estimation
error.  However, since the number of edges scales quadratically in $n$
(the number of nodes) it becomes imperative to infer a sparse
causality graph for large systems, both to avoid overfitting observed
data, as well as to aid the interpretability of the results.

A fairly early approach to the problem in the context of large systems
is provided by \cite{bach2004learning}, where the authors apply a
local search heuristic to the Whittle likelihood with an AIC
penalization.  The local search heuristic is a common approach to
combinatorial optimization due to it's simplicity, but is liable to
get stuck in shallow local minima.

A second and wildly successful heuristic is the LASSO regularizer
\cite{tibshirani1996regression}, which can be understood as a natural
convex relaxation to penalizing the count of the non-zero edges.  The
LASSO enjoys fairly strong theoretical guarantees
\cite{wainwright2009sharp}, extending largely to the case of
stationary time series data with a sufficiently fast rate of
dependence decay \cite{basu2015} \cite{wong2016lasso}
\cite{autoregressive_process_modelling_via_the_lasso_procedure}, and
variations on the LASSO have been applied in a number of different
time series contexts as well as Granger causality
\cite{DBLP:journals/corr/HallacPBL17} \cite{haufe2008sparse}
\cite{bolstad2011causal} \cite{he2013stationary}
\cite{grouped_graphical_granger_modelling_for_gene_expression_regulatory_networks_discovery}.
One of the key improvements to the original LASSO algorithm is the
adaptive (i.e. weighted) ``adaLASSO'' \cite{adaptive_lasso_zou2006},
for which oracle results (i.e. asymptotic support recovery) are
established under less restrictive conditions than for the vanilla
LASSO.


In the context of time series data, sparsity assumptions remain
important, but there is significant additional structure that may
arise as a result of considering the topology of the underlying
Granger causality graph.  The focus of this paper is to shed light on
some of these topological questions, in particular, we study a
particularly simple notion of graph topology which we term ``strongly
causal'' and show that stationary times series whose underlying
causality graph has this structure satisfy natural intuitive notions
of ``information flow'' through the graph.  Moreover, we show that
such graphs are perfectly recoverable with only \textit{pairwise}
Granger causality tests, which would otherwise suffer from serious
confounding problems (see \cite{tam2013gene_pwgc} for earlier work on
pairwise testing and \cite{datta2014mmse} for earlier work on some of
the problems considered here).  Aside from being an interesting
theoretical perspective, prior assumptions about the underlying graph
(similarly to sparsity assumptions) can greatly improve upon the
statistical power of causality graph recovery algorithms when the
assumptions are met.

Detailed study of Granger causality for star structured graphs has
been carried out in \cite{jozsa2018relationship}.  See as well
\cite{barnett2015granger}, \cite{jozsa2018_phd_thesis} for state space
formulations.  

In the case of gene expression networks, we show examples from the
literature which suggest our concept of a ``strongly causal graph''
topology may have application in this field (see Section
\ref{sec:strongly_causal_graphs}).

The principal contributions of this paper are as follows: firstly, in
section \ref{sec:theory} we study \textit{pairwise} Granger causality
relations, providing novel theorems connecting the structure of the
causality graph to the pairwise ``causality flow'' in the system, as
well as an interpretation in terms of the graph topology of the
sparsity pattern of matrices arising in the Wold decomposition,
generalizing in some sense the notion of ``feedback-free'' processes
studied by \cite{caines1975feedback} in close connection with
Granger causality.  We establish sufficient conditions (sections
\ref{sec:strongly_causal_graphs}, \ref{sec:persistent_systems}) under
which a fully conditional Granger causality graph can be recovered
from pairwise tests alone (sec \ref{sec:pairwise_algorithm}).  We
report a summary of simulation results in \ref{sec:simulation}, with
additional results reported in the supplementary material Section
\ref{apx:simulation}.  Our simulation results establish that there is
significant potential for improvement over existing methods, and that
the graph-topological aspects of time series analysis are relevant for
both theory and practice.  Concluding remarks on further open problems
and extensions are provided in Section \ref{sec:conclusion}.  The
proofs of each proposition and theorem are also relegated to the
supplementary material, simple corollaries have proofs included in the
main text.

\section{Graph Topological Aspects of Granger causality}
\label{sec:theory}
\subsection{Formal Setting}
Consider the space $L_2(\Omega)$, the usual Hilbert space of finite
variance random variables over a probability space
$(\Omega, \mathcal{F}, \mathbb{P})$ having inner product
$\inner{x}{y} = \E[xy]$.  We will work with a discrete time and
wide-sense stationary (WSS) $n$-dimensional vector valued process
$x(t)$ (with $t \in \Z$) where the $n$ elements take values in $L_2$.
We suppose that $x(t)$ has zero mean, $\E x(t) = 0$, and has
absolutely summable matrix valued covariance sequence
$R(\tau) \overset{\Delta}{=} \E x(t)x(t - \tau)^\T$, and an absolutely continuous
spectral density.

We will also work frequently with the spaces spanned by the values of
such a process

\begin{equation}
  \label{eq:hilbert_space_defn}
  \begin{aligned}
    \H_t^{(x)} &= \cl \{\sum_{\tau = 0}^p a_\tau^\T x(t - \tau)\ |\ a_\tau \in \R^n, p \in \N\} \subseteq L_2(\Omega)\\
    H_t^{(x)} &= \{a x(t)\ |\ a \in \R\} \subseteq L_2(\Omega),
  \end{aligned}
\end{equation}

where the closure is naturally in mean-square.  We will often omit the
superscript $x$ which should be clear from context.  Evidently these
spaces are separable, and as closed subspaces of a Hilbert space they
are themselves Hilbert.  We will denote the spaces generated in
analogous ways by particular components of $x$ as e.g.
$\H_t^{(i, j)}$, $\H_t^{i}$ or by all but a particular component as
$\H_t^{(-j)}$.

As a consequence of the Wold decomposition theorem (see
e.g. \cite{lindquist}), every WSS sequence has the moving average
$\MA(\infty)$ representation

\begin{equation}
  \label{eqn:wold}
  x(t) = c(t) + \sum_{\tau = 0}^\infty A(\tau) v(t - \tau),
\end{equation}

where $c(t)$ is a purely deterministic sequence, $v(t)$ is an
uncorrelated sequence and $A(0) = I$.  We will assume that $c(t) = 0$.
Given our setup, this representation can be inverted to yield the
$\VAR(\infty)$ form

\begin{equation}
  \label{eqn:ar_representation}
  x(t) = \sum_{\tau = 1}^\infty B(\tau) x(t - \tau) + v(t).
\end{equation}

The Equations \eqref{eqn:wold}, \eqref{eqn:ar_representation} can be
represented as $x(t) = \A(z)v(t) = \B(z)x(t) + v(t)$ via the action
(convolution) of the operators (LTI filters)
$$\A(z) \defeq \sum_{\tau = 0}^\infty A(\tau)z^{-\tau}$$ and
$$\B(z) \defeq \sum_{\tau = 1}^\infty B(\tau)z^{-\tau},$$ where the
operator $z^{-1}$ is the back shift operator acting on
$\ell_2^n(\Omega, \mathcal{F}, \mathbb{P})$, that is:

\begin{equation}
  \label{eqn:filter_action}
  \B_{ij}(z)x_j(t) \defeq \sum_{\tau = 1}^\infty B_{ij}(\tau)x_j(t - \tau).
\end{equation}

Finally, we have the inversion formula

\begin{equation}
  \label{eqn:lsi_inversion}
  \A(z) = (I - \B(z))^{-1} = \sum_{k = 0}^\infty \B(z)^k.
\end{equation}

The aforementioned assumptions are quite weak.  The strongest
assumption we require is finally that $\Sigma_v$ is a diagonal
positive-definite matrix, which is referred to as a lack of
instantaneous feedback in $x(t)$.  We formally state our setup as a
definition, which is the setup for the remainder of the paper:

\begin{definition}[Basic Setup]
  \label{def:basic_setup}
  The process $x(t)$ is an $n$ dimensional wide sense stationary
  process having invertible $\VAR(\infty)$ representation
  \eqref{eqn:ar_representation} where $v(t)$ is sequentially
  uncorrelated and has a diagonal positive-definite covariance matrix.
  The $\MA(\infty)$ representation of Equation \eqref{eqn:wold} has
  $c(t) = 0$ and $A(0) = I$.
\end{definition}

\subsection{Granger Causality}

\begin{definition}[Granger Causality]
  \label{def:granger_causality}
  For the WSS series $x(t)$ satisfying the assumptions of Definition
  \ref{def:basic_setup} we will say that component $x_j$
  \textit{Granger-Causes} (GC) component $x_i$ (with respect to $x$)
  and write $x_j \gc x_i$ if

\begin{equation}
  \linEerr{x_i(t)}{\H_{t - 1}} < \linEerr{x_i(t)}{\H^{(-j)}_{t - 1}},
\end{equation}

where $\xi[x \ |\ \H] \defeq \E (x - \linE{x}{\H})^2$ is the mean
squared estimation error and $\linE{x}{\H} \defeq \text{proj}_{\H}(x)$
denotes the (unique) projection onto the Hilbert space $\H$.
\end{definition}

This notion captures the idea that the process $x_j$ provides
information about $x_i$ that is not available from elsewhere.  The
caveat ``with respect to $x$'' is important in that GC relations can
change when components are added to or removed from our collection $x$
of observations, e.g. new GC relations can arise if we remove the
observations of a common cause, and existing GC relations can
disappear if we observe a new mediating series. The notion is closely
related to the information theoretic measure of transfer entropy,
indeed, if the distribution of $v(t)$ is known to be Gaussian then
they are equivalent \cite{barnett2009granger}.

The notion of conditional orthogonality is the essence of
Granger causality, and enables us to obtain results for a fairly
general class of WSS processes, rather than simply $\VAR(p)$ models.

\begin{definition}[Conditional Orthogonality]
  \label{lem:conditional_orthogonality_equivalence}
  Consider three closed subspaces of a Hilbert space $\mathcal{A}$,
  $\mathcal{B}$, $\mathcal{X}$.  We say that $\mathcal{A}$ is
  conditionally orthogonal to $\mathcal{B}$ given $\mathcal{X}$
  and write $\mathcal{A} \perp \mathcal{B}\ |\ \mathcal{X}$ if

    \begin{equation*}
      \inner{a - \linE{a}{\mathcal{X}}}{b - \linE{b}{\mathcal{X}}} = 0\ \forall a \in \mathcal{A}, b \in \mathcal{B}.
    \end{equation*}

  An equivalent condition is that (see \cite{lindquist} Proposition 2.4.2)
  \begin{equation*}
    \linE{\beta}{\mathcal{A} \vee \mathcal{X}} = \linE{\beta}{\mathcal{X}}\ \forall \beta \in \mathcal{B}
  \end{equation*}
\end{definition}

\begin{theorem}[Granger Causality Equivalences]
  \label{thm:granger_causality_equivalences}
  The following are equivalent:

  \begin{enumerate}
    \item{$x_j \ngc x_i$}
    \item{$\forall \tau \in \N_+\ B_{ij}(\tau) = 0$ i.e. $\B_{ij}(z) = 0$}
    \item{$H_t^{i} \perp \H_{t - 1}^{(j)}\ |\ \H_{t - 1}^{(-j)}$}
    \item{$\linE{x_i(t)}{\H_{t - 1}^{(-j)}} = \linE{x_i(t)}{\H_{t - 1}}$}
  \end{enumerate}
\end{theorem}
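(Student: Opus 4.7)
The plan is to use condition (4) as a central hub, establishing (1) $\iff$ (4), (3) $\iff$ (4), and (2) $\iff$ (4) separately, each via a different tool: nested-projection arithmetic, the definition of conditional orthogonality, and the $\VAR(\infty)$ representation, respectively. For (1) $\iff$ (4), note that $\H_{t-1}^{(-j)} \subseteq \H_{t-1}$. Writing $\pi = \linE{x_i(t)}{\H_{t-1}^{(-j)}}$ and $\Pi = \linE{x_i(t)}{\H_{t-1}}$, the difference $\Pi - \pi$ lies in $\H_{t-1}$ and is orthogonal to $\H_{t-1}^{(-j)}$, so the Pythagorean theorem gives
\[
\linEerr{x_i(t)}{\H_{t-1}^{(-j)}} = \linEerr{x_i(t)}{\H_{t-1}} + \|\Pi - \pi\|^2 .
\]
Equality of the two MSEs (the negation of $x_j \gc x_i$) is therefore equivalent to $\Pi = \pi$, which is exactly (4).

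For (3) $\iff$ (4), I would apply the conditional orthogonality definition with $\mathcal{A} = H_t^i$, $\mathcal{B} = \H_{t-1}^{(j)}$, $\mathcal{X} = \H_{t-1}^{(-j)}$. Since the inner-product condition in the definition is symmetric in $\mathcal{A}$ and $\mathcal{B}$, the cited projection characterization also holds with the roles exchanged, i.e.\ $\linE{\alpha}{\mathcal{B} \vee \mathcal{X}} = \linE{\alpha}{\mathcal{X}}$ for every $\alpha \in \mathcal{A}$. Since $H_t^i$ is the one-dimensional subspace generated by $x_i(t)$ and since $\H_{t-1}^{(j)} \vee \H_{t-1}^{(-j)} = \H_{t-1}$, this single-generator statement collapses precisely to (4).

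For (2) $\iff$ (4), I first need to identify $\linE{x_i(t)}{\H_{t-1}}$ with the VAR sum. The diagonal positive-definite $\Sigma_v$ together with sequential uncorrelatedness give $v_i(t) \perp v_k(s)$ for every $k$ and every $s \leq t-1$; substituting the $\MA(\infty)$ expansion of each $x_k(s)$ into past innovations then shows $v_i(t) \perp \H_{t-1}$, so the $i$-th row of $x(t) = \B(z)x(t) + v(t)$ identifies $\linE{x_i(t)}{\H_{t-1}} = \sum_{k,\tau} B_{ik}(\tau)\, x_k(t-\tau)$. The direction (2) $\Rightarrow$ (4) is then immediate: if $B_{ij}(\tau) = 0$ for all $\tau$, then this sum already lies in $\H_{t-1}^{(-j)}$, so it must coincide with the projection onto the smaller subspace as well.

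The direction (4) $\Rightarrow$ (2) is, as I expect, the main obstacle. Assuming (4), the remaining $x_j$-terms satisfy $\sum_\tau B_{ij}(\tau)\, x_j(t-\tau) \in \H_{t-1}^{(-j)}$, and the challenge is to rule out a non-trivial cancellation between $x_j$ and the other components. I would handle this by appealing to the uniqueness of the Wold decomposition: the invertibility hypothesis yields $\B(z) = I - \A(z)^{-1}$ with $\B(z)$ uniquely determined, and the absolutely continuous spectral density precludes the kind of degeneracy (proper containment between $\H_{t-1}^{(-j)}$ and $\H_{t-1}$) that would allow a non-zero $B_{ij}(\tau)$ to be absorbed into the remaining $k \neq j$ coefficients, forcing $B_{ij}(\tau) = 0$ for every $\tau$.
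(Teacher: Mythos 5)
Your proposal is correct, and it takes a genuinely different route from the paper. The paper proves a single cycle $(1)\Rightarrow(2)\Rightarrow(3)\Leftrightarrow(4)\Rightarrow(1)$: it gets $(1)\Rightarrow(2)$ by asserting that ``the best estimate is necessarily the coefficients of the model'' (uniqueness of orthogonal projection), gets $(2)\Rightarrow(3)$ by observing that when $\B_{ij}(z)=0$ the residual $x_i(t)-\linE{x_i(t)}{\H_{t-1}^{(-j)}}$ is exactly $v_i(t)\perp\H_{t-1}$, and closes the loop with the cited projection characterization of conditional orthogonality and the definition of GC. You instead hub everything at $(4)$: your $(1)\Leftrightarrow(4)$ via the Pythagorean identity for nested subspaces is cleaner and more explicit than the paper's appeal to projection uniqueness; your $(3)\Leftrightarrow(4)$ via the symmetrized form of the projection characterization matches the paper's step; and your $(2)\Rightarrow(4)$ uses the same key fact ($v_i(t)\perp\H_{t-1}$, hence the projection onto $\H_{t-1}$ \emph{is} the VAR sum). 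The one genuinely hard implication --- ruling out a nonzero $\B_{ij}(z)$ whose contribution happens to lie in $\H_{t-1}^{(-j)}$ --- appears in your $(4)\Rightarrow(2)$ and, in disguise, in the paper's $(1)\Rightarrow(2)$; both treatments are sketchy, but you are the more honest of the two in naming the failure mode (absorption of the $j$-coefficients into the others) and pointing at the hypotheses (invertibility, positive-definite diagonal $\Sigma_v$, absolutely continuous spectrum) that give identifiability of the filter coefficients. To fully close that step one would note that the spectral density $\A(e^{i\omega})\Sigma_v\A(e^{i\omega})^*$ is a.e.\ positive definite, so no nontrivial causal linear combination of the components vanishes in $L_2$; neither you nor the paper writes this out, so your argument is at least at parity with the published one, and its structure more clearly isolates where the real work lies.
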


\subsection{Granger Causality Graphs}
We establish some graph theoretic notation and terminology, collected
formally in definitions for the reader's convenient reference.

\begin{definition}[Graph Theory Review]
  A \textit{graph} $\gcg = (V, \gcge)$ is simply a
  tuple of sets respectively called \textit{nodes} and \textit{edges}.
  Throughout this paper, we have in all cases
  $V = [n] \defeq \{1, 2, \ldots, n\}$.  We will also focus solely on
  \textit{directed} graphs, where the edges
  $\gcge \subseteq V \times V$ are \textit{ordered} pairs.

  A (directed) \textit{path} (of length $r$) from node $i$ to node
  $j$, denoted $\gcgpath{i}{j}$, is a sequence
  $a_0, a_1, \ldots, a_{r - 1}, a_r$ with $a_0 = i$ and $a_r = j$ such
  that $\forall\ 0 \le k \le r\ (a_k, a_{k + 1}) \in \gcge$, and where
  $(a_k, a_{k - 1})$ are \textit{distinct} for $0 \le k < r$.

  A \textit{cycle} is a path of length $2$ or more between a node and
  itself.  An edge between a node and itself $(i, i) \in \gcge$ (which
  we do not consider to be a cycle) is referred to as a \textit{loop}.

  A graph $\gcg$ is a \textit{directed acyclic graph} (DAG) if it is a
  directed graph and does not contain any cycles.
\end{definition}

\begin{definition}[Parents, Grandparents, Ancestors]
  A node $j$ is a \textit{parent} of node $i$ if $(j, i) \in \gcge$.
  The set of all $i$'s parents will be denoted $\pa{i}$, and we
  explicitly exclude loops as a special case, that is,
  $i \not\in \pa{i}$ even if $(i, i) \in \gcge$.

  The set of level $\ell$ \textit{grandparents} of node $i$, denoted
  $\gpn{\ell}{i}$, is the set such that $j \in \gpn{\ell}{i}$ if and
  only if there is a \textit{directed path} of length $\ell$ in $\gcg$
  from $j$ to $i$.  Clearly, $\pa{i} = \gpn{1}{i}$.

  Finally, the set of \textit{level $\ell$ ancestors} of $i$:
  $\ancn{\ell}{i} = \bigcup_{\lambda \le \ell}\gpn{\lambda}{i}$ is the
  set such that $j \in \ancn{\ell}{i}$ if and only if there is a
  directed path of length $\ell$ \textit{or less} in $\gcg$ from $j$
  to $i$.  The set of \textit{all ancestors} of $i$
  (i.e. $\ancn{n}{i}$) is denoted simply $\anc{i}$.

  Recall that we do not allow a node to be it's own parent, although
  unless $\gcg$ is a DAG, a node \textit{can} be it's own ancestor.
  We will occasionally need to explicitly exclude $i$ from $\anc{i}$,
  in which case we will write $\anc{i}\setminus \{i\}$.
\end{definition}

Our principal object of study will be a graph determined by
Granger causality relations as follows.

\begin{definition}[Causality graph]
  We define the Granger causality graph $\gcg = ([n], \gcge)$ to be the directed
  graph formed on $n$ vertices where an edge $(j, i) \in \gcge$ if and
  only if $x_j$ Granger-causes $x_i$ (with respect to $x$).  That is,
  $$(j, i) \in \gcge \iff j \in \pa{i} \iff x_j \gc x_i.$$
\end{definition}

The edges of the Granger causality graph $\gcg$ can be given a general
notion of ``weight'' by associating an edge $(j, i)$ with the
\textit{strictly causal} LTI filter $\B_{ij}(z)$ (see Equation
\eqref{eqn:filter_action}).  Thence, the matrix $\B(z)$ is analogous
to a \textit{weighted adjacency matrix}\footnote{\footnotesize We are using the
  convention that $\B_{ij}(z)$ is a filter with input $x_j$ and output
  $x_i$ so as to write the action of the system as $\B(z)x(t)$ with
  $x(t)$ as a column vector.  This competes with the usual convention
  for adjacency matrices where $A_{ij} = 1$ if there is an edge
  $(i, j)$.  In our case, the sparsity pattern of $\B_{ij}$ is the
  \textit{transposed} conventional adjacency matrix.} for the graph $\gcg$.  And,
in the same way that the $k^{\text{th}}$ power of an adjacency matrix
counts the number of paths of length $k$ between nodes,
$(\B(z)^k)_{ij}$ is a filter isolating the ``action'' of $j$ on $i$ at
a time lag of $k$ steps, this is exemplified in the inversion formula
\eqref{eqn:lsi_inversion}.

From the $\VAR$ representation of $x(t)$ there is clearly a tight
relationship between each node and it's parent nodes, the relationship
is quantified through the sparsity pattern of $B(z)$.  Similarly, the
following proposition is analogous to the definition of feedback free
processes of \cite{caines1975feedback} and provides an interpretation
of the sparsity pattern of $A(z)$ (from the MA representation of
$x(t)$) in terms of the causality graph $\gcg$.

\begin{proposition}[Ancestor Expansion]
  \label{prop:parent_expanding}
  The component $x_i(t)$ of $x(t)$ can be represented in terms of it's
  parents in $\gcg$:

  \begin{equation}
    \label{eqn:parent_expansion}
    x_i(t) = v_i(t) + \B_{ii}(z)x_i(t) + \sum_{k \in \pa{i}}\B_{ik}(z)x_k(t).
  \end{equation}

  Moreover, $x_i$ can be expanded in terms of it's ancestor's $v(t)$
  components only:

  \begin{equation}
    \label{eqn:ancestor_expansion}
    x_i(t) = \A_{ii}(z)v_i(t) + \sum_{\substack{k \in \anc{i} \\ k \ne i}}\A_{ik}(z)v_k(t),
  \end{equation}

  where $\A(z) = \sum_{\tau = 0}^\infty A(\tau)z^{-\tau}$ is the filter from
  the Wold decomposition representation of $x(t)$, Equation
  \eqref{eqn:wold}.
\end{proposition}

This statement is ultimately about the sparsity pattern in the Wold
decomposition matrices $A(\tau)$ since
$x_i(t) = \sum_{\tau = 0}^\infty \sum_{j = 1}^n A_{ij}(\tau)v_j(t -
\tau)$.  The proposition states that if $j \not \in \anc{i}$ then
$\A_{ij}(z) = 0$.  

\subsection{Pairwise Granger Causality}
\label{sec:pwgc}
Recall that Granger causality in general must be understood with
respect to a particular universe of observations.  If $x_j \gc x_i$
with respect to $x_{-k}$, it may not hold with respect to $x$.  For
example, $x_k$ may be a common ancestor which when observed, completely
explains the connection from $x_j$ to $x_i$.  In this section we study
\textit{pairwise} Granger causality, and seek to understand when
knowledge of pairwise relations is sufficient to deduce the true fully
conditional relations of $\gcg$.

\begin{definition}[Pairwise Granger causality]
  We will say that $x_j$ pairwise Granger-causes $x_i$ and write
  $x_j \pwgc x_i$ if $x_j$ Granger-causes $x_i$ with respect only to
  $(x_i, x_j)$.
\end{definition}

This notion is of interest for a variety of reasons.  From a purely
conceptual standpoint, we will see how the notion can in some sense
capture the idea of ``flow of information'' in the underlying graph,
in the sense that if $j \in \anc{i}$ we expect that $j \pwgc i$.  It may
also be useful for reasoning about the conditions under which
\textit{unobserved} components of $x(t)$ may or may not interfere with
inference in the actually observed components.  Finally, motivated
from a practical standpoint to analyze causation in large systems,
practical estimation procedures based purely on pairwise causality
tests are of interest since the computation of such pairwise relations
is substantially easier.

The following propositions are essentially lemmas used for the proof
of the upcoming Proposition \ref{prop:ancestor_properties}, but
remain relevant for providing intuitive insight into the problems at
hand.

\begin{proposition}
  Consider distinct nodes $i, j$ in a Granger causality graph
  $\gcg$.  If

  \begin{enumerate}[label=(\alph*)]
    \item{$j \not\in \anc{i}$ and $i \not\in \anc{j}$}
    \item{$\anc{i}\cap\anc{j} = \emptyset$}
  \end{enumerate}

  then $\H_t^{(i)} \perp \H_t^{(j)}$, that is,
  $\forall s, \tau \in \Z_+\ \E[x_i(t - s)x_j(t - \tau)] = 0$.  Moreover,
  this means that $j \npwgc i$ and $\linE{x_j(t)}{\H_t^{(i)}} = 0$.
\end{proposition}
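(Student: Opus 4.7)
The plan is to apply the ancestor expansion from Proposition~\ref{prop:parent_expanding} and then exploit the diagonal, sequentially uncorrelated structure of the innovation process $v(t)$. Write $S_i \defeq \{i\}\cup\anc{i}$ and analogously $S_j$. Proposition~\ref{prop:parent_expanding} represents each shifted value $x_i(t-s)$ as a linear combination of $\{v_k(t-s-\tau) : k \in S_i,\ \tau \ge 0\}$, and symmetrically for $x_j(t-\tau)$ with index set $S_j$. The first step is therefore to establish $S_i \cap S_j = \emptyset$: condition~(a) rules out $i \in S_j$ and $j \in S_i$, which covers also the cyclic edge cases $i\in\anc{i}$ or $j\in\anc{j}$, while condition~(b) excludes any remaining overlap within the ancestor sets themselves.

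Second, using that $v(t)$ is sequentially uncorrelated with diagonal covariance $\Sigma_v$, we have $\E[v_k(r)v_l(s)] = 0$ whenever $k \ne l$ or $r \ne s$. For any $k \in S_i$ and $l \in S_j$ the disjointness forces $k \ne l$, so all such cross-correlations vanish. Extending through bilinearity and mean-square continuity of the inner product (the filter coefficients $A_{ik}(\tau)$ are square summable because $v(t)$ has finite variance and $x(t)$ is in $L_2$), I obtain $\E[x_i(t-s)x_j(t-\tau)] = 0$ for all $s,\tau \ge 0$. Passing to closed linear spans then yields $\H_t^{(i)} \perp \H_t^{(j)}$, which is the main claim.

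The two remaining assertions fall out immediately. Since $x_j(t) \in \H_t^{(j)}$ is orthogonal to the entire subspace $\H_t^{(i)}$, we have $\linE{x_j(t)}{\H_t^{(i)}} = 0$. For $j \npwgc i$, the orthogonality $\H_{t-1}^{(i)} \perp \H_{t-1}^{(j)}$ implies that the projection of $x_i(t)$ onto the bivariate history $\H_{t-1}^{(i,j)} = \H_{t-1}^{(i)} \vee \H_{t-1}^{(j)}$ decomposes as a sum of individual projections, and the $\H_{t-1}^{(j)}$ component vanishes because $x_i(t) \in \H_t^{(i)} \perp \H_t^{(j)} \supseteq \H_{t-1}^{(j)}$; hence the pairwise and univariate prediction errors for $x_i(t)$ coincide and pairwise GC fails by Definition~\ref{def:granger_causality}. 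I do not anticipate a real obstacle; the one point needing care is the bookkeeping around nodes in cycles possibly belonging to their own ancestor sets, which is precisely what condition~(a) is designed to rule out.
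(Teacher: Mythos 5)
Your proposal is correct and follows essentially the same route as the paper's proof: expand both $x_i(t-s)$ and $x_j(t-\tau)$ via the ancestor expansion of Proposition~\ref{prop:parent_expanding} and observe that conditions (a) and (b) force the two index sets $\{i\}\cup\anc{i}$ and $\{j\}\cup\anc{j}$ to be disjoint, so every cross term vanishes by the diagonal, sequentially uncorrelated structure of $v(t)$. Your explicit treatment of the ``moreover'' claims via the decomposition of the projection onto the orthogonal join is a correct (and slightly more detailed) rendering of what the paper leaves implicit.
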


\begin{remark}
  It is possible for components of $x(t)$ to be correlated at some
  time lags without resulting in pairwise causality.  For instance,
  the conclusion $j \npwgc i$ of Proposition
  \ref{prop:separated_ancestor_uncorrelated} will still hold even if
  $i \in \anc{j}$, since $j$ cannot provide any information about $i$
  that is not available from observing $i$ itself.
\end{remark}

\begin{proposition}
  Consider distinct nodes $i, j$ in a Granger causality graph $\gcg$.
  If

  \begin{enumerate}[label=(\alph*)]
    \item{$j \not\in \anc{i}$}
    \item{$\anc{i}\cap\anc{j} = \emptyset$}
  \end{enumerate}

  then $j \npwgc i$.
\end{proposition}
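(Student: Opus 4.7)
The plan is to recast the claim as an orthogonality statement and exploit the ancestor-based innovation structure supplied by Proposition~\ref{prop:parent_expanding}. Using Theorem~\ref{thm:granger_causality_equivalences} applied within the pairwise universe $(x_i,x_j)$, the statement $j \npwgc i$ is equivalent to $\linE{x_i(t)}{\H_{t-1}^{(i)}} = \linE{x_i(t)}{\H_{t-1}^{(i,j)}}$, which in turn is equivalent to the univariate prediction residual $u_i(t) := x_i(t) - \linE{x_i(t)}{\H_{t-1}^{(i)}}$ being orthogonal to $\H_{t-1}^{(j)}$.

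Next I would invoke Proposition~\ref{prop:parent_expanding} to confine $u_i(t)$ and $\H_{t-1}^{(j)}$ to innovation spans. The ancestor expansion yields $x_i(t) = v_i(t) + r$ where $r$ lies in the closed span of $\{v_k(s) : k \in \anc{i}\cup\{i\},\ s \le t-1\}$, and $\H_{t-1}^{(j)}$ is contained in the closed span of $\{v_k(s) : k \in \anc{j}\cup\{j\},\ s \le t-1\}$. Since $\linE{x_i(t)}{\H_{t-1}^{(i)}}$ also lies in the former span, we may write $u_i(t) = v_i(t) + w$ with $w$ in that same span. The piece $v_i(t)$ is orthogonal to all of $\H_{t-1}$ by whiteness, hence to $\H_{t-1}^{(j)}$, so it remains to show $w \perp \H_{t-1}^{(j)}$. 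Because the innovations $v_k(s)$ are mutually orthogonal (diagonal $\Sigma_v$ together with temporal whiteness), this reduces to analysing the intersection of index sets, which, using (a), (b), and $i \neq j$, simplifies to
\begin{equation*}
(\anc{i}\cup\{i\}) \cap (\anc{j}\cup\{j\}) = \{i\} \cap \anc{j}.
\end{equation*}

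If $i \notin \anc{j}$ the intersection is empty and $w \perp \H_{t-1}^{(j)}$ is immediate. The subtler case $i \in \anc{j}$ is where I expect the main obstacle to lie: the expansions of $x_i$ and $x_j$ share the innovation $v_i$, so naive disjointness is unavailable. I would resolve this by observing that any $k \in \anc{i}$ would, by path composition through $i$ to $j$, also belong to $\anc{j}$, contradicting (b); hence $\anc{i} = \emptyset$, and Proposition~\ref{prop:parent_expanding} reduces $x_i$ to the scalar description $x_i(t) = \A_{ii}(z) v_i(t)$. The multivariate invertibility of $I - \B(z)$ assumed in Definition~\ref{def:basic_setup} restricts to invertibility of the scalar factor $1 - \B_{ii}(z)$ (since the $i$-th row of $I - \B(z)$ is $[0,\ldots,1-\B_{ii}(z),\ldots,0]$, this factor divides the determinant), so the scalar $\VAR$ for $x_i$ is invertible and $\H_{t-1}^{(i)} = \text{cl span}\{v_i(s) : s \le t-1\}$. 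Consequently $u_i(t) = v_i(t)$, and orthogonality to $\H_{t-1}^{(j)}$ again follows from whiteness, completing the argument.
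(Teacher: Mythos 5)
Your proof is correct and rests on the same two pillars as the paper's: the Hilbert-space characterization of non-causality from Theorem \ref{thm:granger_causality_equivalences} and the ancestor expansion of Proposition \ref{prop:parent_expanding}, with orthogonality of the innovations $v_k(s)$ doing the final work. The bookkeeping is dual---you expand the residual $x_i(t) - \linE{x_i(t)}{\H_{t-1}^{(i)}}$ into innovations indexed by $\anc{i}\cup\{i\}$ and intersect index sets with the span $\anc{j}\cup\{j\}$ carrying $\H_{t-1}^{(j)}$, whereas the paper expands $\Phi(z)x_j(t)$ over $\anc{j}\cup\{j\}$ and kills each term via Lemma \ref{lem:vj_perp}---but the underlying content is identical. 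One point genuinely in your favour: you explicitly isolate the case $i \in \anc{j}$, which hypotheses (a) and (b) do not exclude. The paper disposes of all terms by citing Lemma \ref{lem:vj_perp}, but that lemma is stated for an index distinct from $i$ with the non-ancestry condition, so the term $k = i$ (arising exactly when $i \in \anc{j}$) is not literally covered by it. Your observation that (b) then forces $\anc{i} = \emptyset$, whence $1 - \B_{ii}(z)$ divides $\det(I - \B(z))$ and is invertible, so that $\H_{t-1}^{(i)}$ coincides with the closed span of $\{v_i(s) : s \le t-1\}$ and the residual collapses to $v_i(t)$, is the right way to close that gap; your argument is, if anything, slightly more complete than the one in the paper.
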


The previous result can still be strengthened significantly; notice
that it is possible to have some $k \in \anc{i} \cap \anc{j}$ where still
$j \npwgc i$, an example is furnished by the three node graph
$k \rightarrow i \rightarrow j$ where clearly
$k \in \anc{i}\cap\anc{j}$ but $j \npwgc i$.  We must introduce the concept
of a \textit{confounding} variable, which effectively eliminates the
possibility presented in this example.

\begin{definition}[Confounder]
  A node $k$ will be referred to as a \textit{confounder} of nodes
  $i, j$ (neither of which are equal to $k$) if
  $k \in \anc{i} \cap \anc{j}$ and there exists a path
  $\gcgpath{k}{i}$ not containing $j$, and a path $\gcgpath{k}{j}$ not
  containing $i$. A simple example is furnished by the ``fork'' graph
  $i \leftarrow k \rightarrow j$.
\end{definition}

\begin{proposition}
  \label{prop:ancestor_properties}
  If in a Granger causality graph $\gcg$ where $j \pwgc i$ then
  $j \in \anc{i}$ or $\exists k \in \anc{i} \cap\anc{j}$ which is a
  confounder of $(i, j)$.
\end{proposition}

\begin{remark}
  The interpretation of this proposition is that for $j \pwgc i$ then
  there must either be ``causal flow'' from $j$ to $i$
  ($j \in \anc{i}$) or there must be a confounder $k$ through which
  common information is received.
\end{remark}

An interesting corollary is the following:

\begin{corollary}
  If the graph $\gcg$ is a DAG then $j \pwgc i, i \pwgc j \implies \exists k \in \anc{i} \cap \anc{j}$ confounding $(i, j)$.
\end{corollary}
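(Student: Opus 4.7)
The plan is to apply Proposition \ref{prop:ancestor_properties} twice, once to each pairwise relation, and then use the DAG hypothesis to eliminate the cases in which no confounder is produced.

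Applying Proposition \ref{prop:ancestor_properties} to $j \pwgc i$, we get the dichotomy: either (A) $j \in \anc{i}$, or (B) there exists some $k \in \anc{i}\cap\anc{j}$ which is a confounder of $(i,j)$. Similarly, applying it to $i \pwgc j$ yields: either (A$'$) $i \in \anc{j}$, or (B$'$) there exists a confounder $k' \in \anc{i}\cap\anc{j}$ of $(i,j)$. In either of the cases (B) or (B$'$) we are already done, so the only situation left to exclude is the simultaneous occurrence of (A) and (A$'$).

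The key observation is that $\gcg$ being a DAG precludes the simultaneous occurrence of (A) and (A$'$): by definition, $j \in \anc{i}$ means there is a directed path $\gcgpath{j}{i}$, and $i \in \anc{j}$ means there is a directed path $\gcgpath{i}{j}$. Concatenating these two paths yields a closed directed walk, hence a directed cycle through $i$ and $j$, contradicting the DAG hypothesis. Therefore at least one of the two applications must land in the confounder branch, giving the desired $k \in \anc{i} \cap \anc{j}$ confounding $(i,j)$.

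There isn't really a hard step here; the only subtlety is to be careful that in the mixed cases (A) $\wedge$ (B$'$) or (B) $\wedge$ (A$'$), the confounder provided by one application of Proposition \ref{prop:ancestor_properties} is already the witness we need, since the statement only asserts the existence of \emph{some} confounder, not that both possible routes yield one. With that observation, the corollary reduces to a one-line case analysis on top of Proposition \ref{prop:ancestor_properties}.
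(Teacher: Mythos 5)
Your proof is correct and is exactly the argument the paper intends: the corollary is stated without an explicit proof as an immediate consequence of Proposition \ref{prop:ancestor_properties}, and your case analysis (both applications cannot land in the ancestor branch since $j \in \anc{i}$ and $i \in \anc{j}$ together would force a directed cycle) is the same reasoning the paper uses for the neighbouring Corollary \ref{cor:parent_corollary}. Your remark that the confounder from either application suffices, since the confounder notion is symmetric in $i$ and $j$, is a correct and worthwhile check.
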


It seems reasonable to expect a converse of Proposition
\ref{prop:ancestor_properties} to hold, i.e.
$j \in \anc{i} \Rightarrow j \pwgc i$.  Unfortunately, this is not the
case in general, as different paths through $\gcg$ can lead to
cancellation (see Figure \ref{fig:diamond_cancellation}).  In fact, we
do not even have $j \in \pa{i} \Rightarrow j \pwgc i$ (see Figure
\ref{fig:lag_cancellation}).

\begin{example}
  \label{ex:diamond_cancellation}
  Firstly, on $n = 4$ nodes, ``diamond'' shapes can lead to cancellation on paths of length 2:

\begin{equation*}
  x(t) =
  \left[
    \begin{array}{cccc}
      0 & 0 & 0 & 0\\
      a & 0 & 0 & 0\\
      -a & 0 & 0 & 0\\
      0 & 1 & 1 & 0\\
    \end{array}
  \right] x(t - 1) + v(t),
\end{equation*}

with $\E v(t) = 0,\ \E v(t)v(t - \tau)^\T = \delta_\tau I$.

By directly calculating

\begin{align*}
  x_4(t) &= x_2(t - 1) + x_3(t - 1) + v_4(t)\\
         &= ax_1(t - 2) + av_2(t - 1) - ax_1(t - 2) -av_3(t - 1) + v_4(t)\\
         &= a(v_2(t - 1) - v_3(t - 1)) + v_4(t),
\end{align*}

we see that, since $v(t)$ is isotropic white noise, $1 \npwgc 4$.  The problem here is that there are multiple paths from $x_1$ to $x_4$.
\end{example}

\begin{figure}
  \centering
  \caption{Examples illustrating the difficulty of obtaining a converse to Proposition \ref{prop:ancestor_properties}}
  \begin{subfigure}[b]{0.35\textwidth}
    \includegraphics[width=\linewidth]{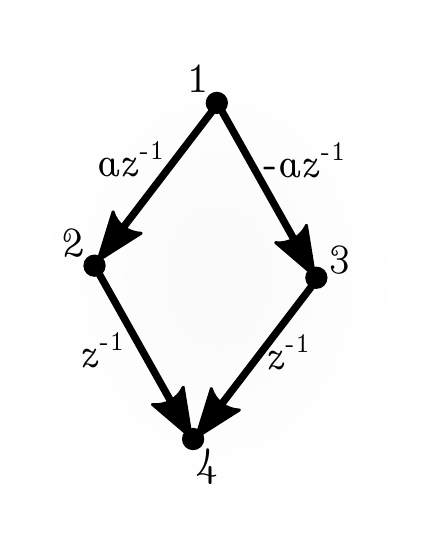}
    \caption{Path cancellation: $j \in \anc{i} \nRightarrow j \pwgc i$}
    \label{fig:diamond_cancellation}
  \end{subfigure}
  \begin{subfigure}[b]{0.35\textwidth}
    \includegraphics[width=\linewidth]{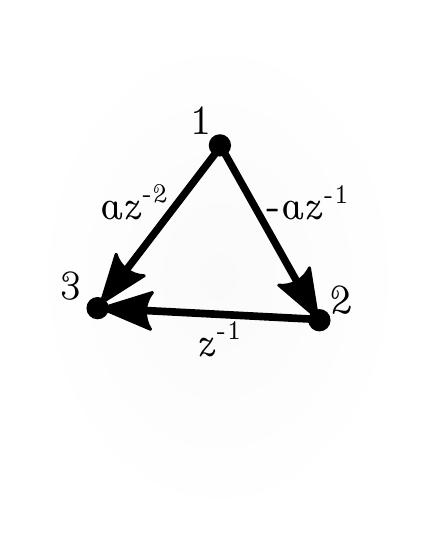}
    \caption{Cancellation from time lag: $j \in \pa{i} \nRightarrow j \pwgc i$}
    \label{fig:lag_cancellation}
  \end{subfigure}
\end{figure}

\begin{example}
  \label{ex:lag_cancellation}
  A second example on $n = 3$ nodes is also worth examining, in this case
  cancellation is a result of differing time lags.

\begin{equation*}
  x(t) =
  \left[
    \begin{array}{ccc}
      0 & 0 & 0\\
      -a & 0 & 0\\
      0 & 1 & 0\\
    \end{array}
  \right] x(t - 1) +
  \left[
    \begin{array}{ccc}
      0 & 0 & 0\\
      0 & 0 & 0\\
      a & 0 & 0\\
    \end{array}
  \right] x(t - 2) + v(t)
\end{equation*}

Then

\begin{align*}
  x_2(t) &= v_2(t) - ax_1(t - 1)\\
  x_3(t) &= v_3(t) + x_2(t - 1) + ax_1(t - 2)\\
  \Rightarrow x_3(t) &= v_2(t - 1) + v_3(t),
\end{align*}

and again $1 \npwgc 3$.
\end{example}

\subsection{Strongly Causal Graphs}
\label{sec:strongly_causal_graphs}

In this section and the next we will seek to understand when converse
statements of Proposition \ref{prop:ancestor_properties} \textit{do}
hold.  One possibility is to restrict the coefficients of the system
matrix, e.g. by requiring that $B_{ij}(\tau) \ge 0$.  Instead,
we think it more meaningful to focus on the defining feature of
time series networks, that is, the topology of $\gcg$.

\begin{definition}[Strongly Causal]
  \label{def:strongly_causal}
  We will say that a Granger causality graph $\gcg$ is
  \textit{strongly causal} if there is at most one directed path between
  any two nodes.  Strongly Causal Graphs will be referred to as SCGs.
\end{definition}

Examples of strongly causal graphs include directed trees (or
forests), DAGs where each node has at most one parent, and Figure
\ref{fig:example_fig3} of this paper.  A complete bipartite graph with
$2n$ nodes is also strongly causal, demonstrating that the number of
edges of such a graph can still scale quadratically with the number of
nodes.  It is evident that the strong causal property is inherited by
subgraphs.

\begin{example}
  Though examples of SCGs are easy to construct in theory, should
  practitioners expect SCGs to arise in application?  While a positive
  answer to this question is not \textit{necessary} for the concept to
  be useful, it is certainly sufficient.  Though the answer is likely
  to depend upon the particular application area, examples appear to
  be available in biology, in particular, the authors of
  \cite{discovering_graphical_Granger_causality_using_the_truncating_lasso_penalty}
  cite an example of the so called ``transcription regulatory network
  of \textit{E.coli}'', and
  \cite{learning_genome_scale_regulatory_networks} study a much larger
  regulatory network of \textit{Saccharomyces cerevisiae}.  These
  networks, which we reproduce in Figure \ref{fig:gene_networks},
  appear to have at most a small number of edges which violate the
  strong-causality condition.

  \begin{figure}[h]
    \centering
    \caption{Transcription Regulatory Networks}
    \label{fig:gene_networks}
    \begin{subfigure}[b]{0.45\textwidth}
      \caption{\textit{E.Coli} Network
        \cite{discovering_graphical_Granger_causality_using_the_truncating_lasso_penalty}}
      \label{fig:gene_network1}
      \includegraphics[width=\linewidth, height=\linewidth]{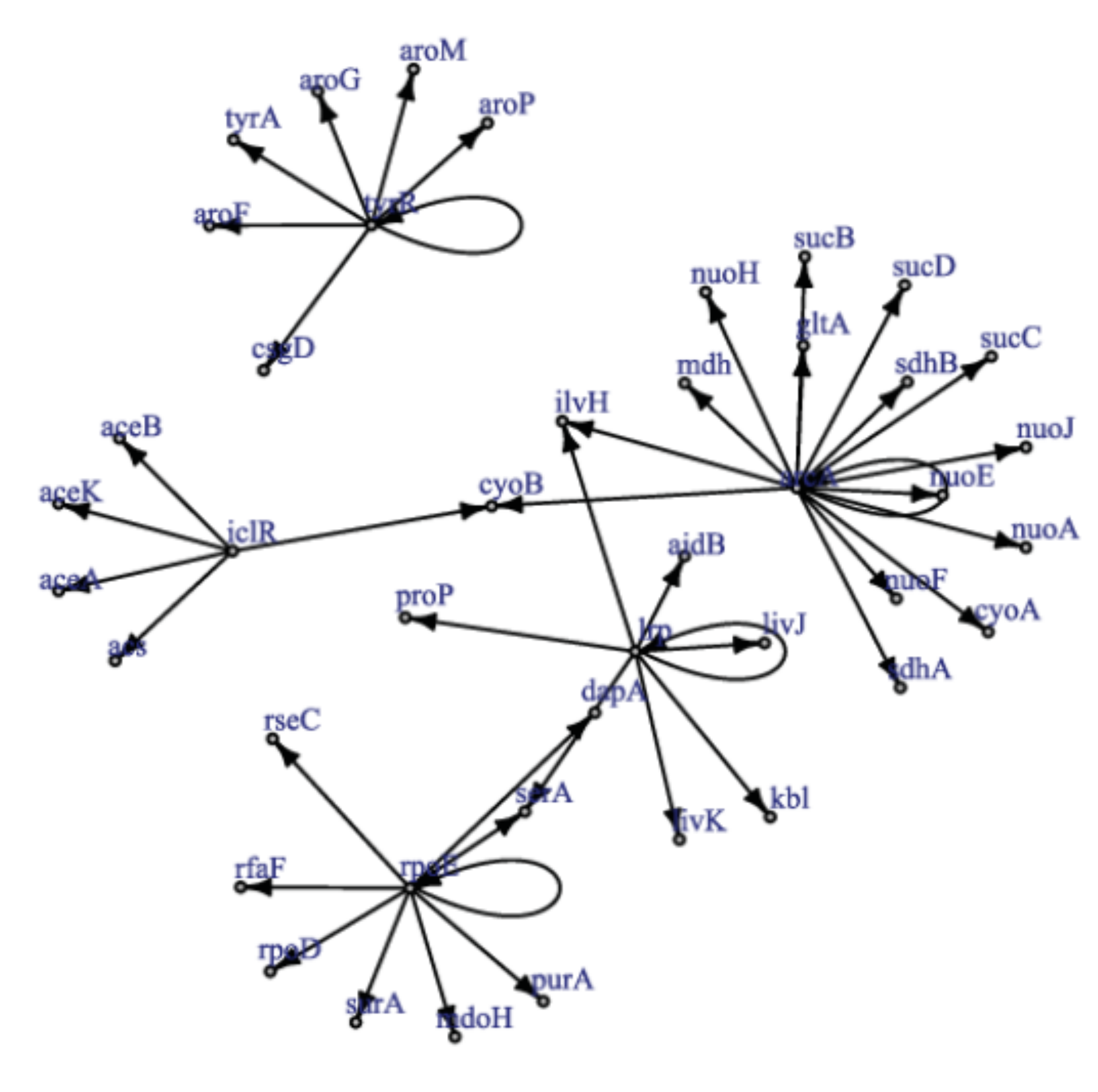}
    \end{subfigure}
    \begin{subfigure}[b]{0.45\textwidth}
      \caption{\textit{Saccharomyces cerevisiae} Network
        \cite{learning_genome_scale_regulatory_networks}}
      \label{fig:gene_network2}
      \includegraphics[width=\linewidth, height=\linewidth]{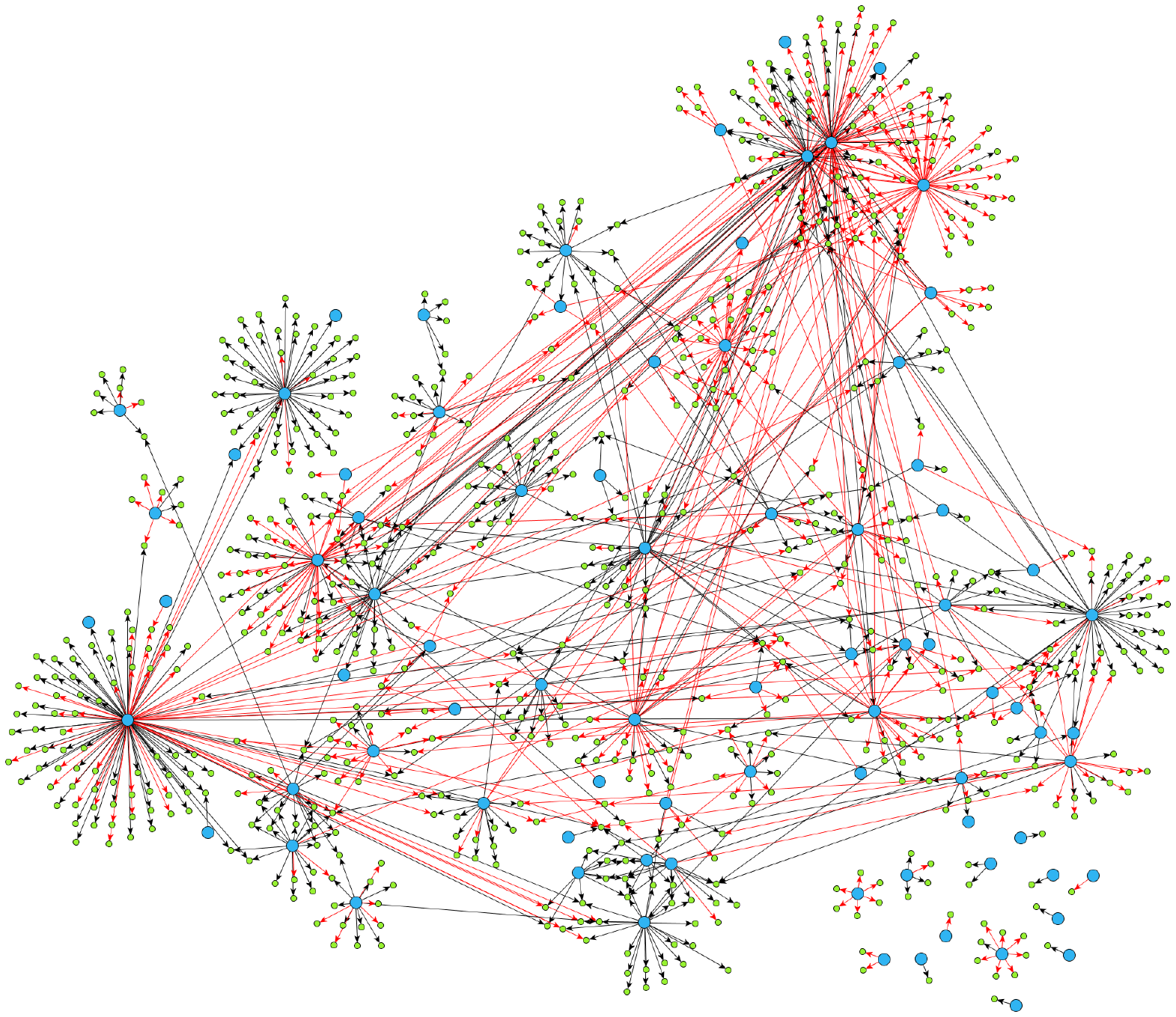}
    \end{subfigure}

    {\footnotesize Figure \ref{fig:gene_network1} has only one edge
      violating the strong-causality assumption, and Figure
      \ref{fig:gene_network2} appears qualitatively to be nearly
      strongly causal.  In particular, most of the edges are emanating
      from network hubs, and many of the other edges are colliders.

      Figure \ref{fig:gene_network1}
      is reproduced under the Creative Commons Attribution
      Non-Commercial License
      (http://creativecommons.org/licenses/by-nc/2.5) and Figure
      \ref{fig:gene_network2} under the Creative Commons Attribution
      License (https://creativecommons.org/licenses/by/4.0/)}
  \end{figure}
\end{example}

For later use, and to get a feel for the topological implications of
strong causality, we explore a number of properties of such graphs
before moving into the main result of this section.  The following
important property essentially strengthens Proposition
\ref{prop:ancestor_properties} for the case of strongly causal graphs.

\begin{proposition}
  \label{prop:sc_graph_common_anc}
  In a strongly causal graph if $j \in \anc{i}$ then any
  $k \in \anc{i} \cap \anc{j}$ is not a confounder, that is,
  the unique path from $k$ to $i$ contains $j$.
\end{proposition}

\begin{corollary}
  \label{cor:parent_corollary}
  If $\gcg$ is a strongly causal DAG then $i \pwgc j$ and $j \in \anc{i}$ are
  \textit{alternatives}, that is $i \pwgc j \Rightarrow j \notin \anc{i}$.
\end{corollary}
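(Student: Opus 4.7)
The plan is to prove this by contradiction, leveraging Propositions \ref{prop:ancestor_properties} and \ref{prop:sc_graph_common_anc} as the two key hammers. Assume for contradiction that $i \pwgc j$ and simultaneously $j \in \anc{i}$. Applying Proposition \ref{prop:ancestor_properties} to the relation $i \pwgc j$ (swapping the roles of $i$ and $j$ in the statement) yields two mutually exclusive possibilities: either $i \in \anc{j}$, or there exists some $k \in \anc{i} \cap \anc{j}$ that is a confounder of $(i,j)$.

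In the first case, $i \in \anc{j}$ together with our assumption $j \in \anc{i}$ would produce directed paths $\gcgpath{i}{j}$ and $\gcgpath{j}{i}$, whose concatenation is a cycle through $i$. This directly contradicts the DAG hypothesis.

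In the second case, we invoke the strongly causal hypothesis. By Proposition \ref{prop:sc_graph_common_anc}, because $\gcg$ is strongly causal and $j \in \anc{i}$, any $k \in \anc{i} \cap \anc{j}$ has the property that the (unique) directed path from $k$ to $i$ must pass through $j$. But the definition of ``confounder of $(i,j)$'' demands the existence of a path $\gcgpath{k}{i}$ \emph{not} containing $j$. Since strong causality forces this path to be the unique one, we have a contradiction.

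Both cases being impossible, we conclude $j \notin \anc{i}$, giving the stated alternative. The argument is essentially a direct assembly of the two prior results, so I do not anticipate any genuine obstacle; the only subtle point is remembering to apply Proposition \ref{prop:ancestor_properties} with the roles of $i$ and $j$ swapped (so that the conclusion about ancestors is $i \in \anc{j}$, which is what pairs correctly with the DAG assumption) and to carefully use the uniqueness of paths under strong causality to rule out the confounder case.
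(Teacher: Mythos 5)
Your proof is correct and follows essentially the same route as the paper's: apply Proposition \ref{prop:ancestor_properties} to $i \pwgc j$, eliminate the $i \in \anc{j}$ branch via acyclicity, and eliminate the confounder branch via Proposition \ref{prop:sc_graph_common_anc}. (The two possibilities from Proposition \ref{prop:ancestor_properties} are not ``mutually exclusive'' as you state---it is an inclusive or---but this has no bearing on the argument, since you refute both.)
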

\begin{proof}
  Suppose that $i \pwgc j$ and $j \in \anc{i}$.  Then since $\gcg$ is
  acyclic $i \not\in \anc{j}$, and by Proposition
  \ref{prop:ancestor_properties} there is some
  $k \in \anc{i}\cap\anc{j}$ which is a confounder.  However, by
  Proposition \ref{prop:sc_graph_common_anc} $k$ cannot be a
  confounder, a contradiction.
\end{proof}



  
\begin{corollary}
  \label{cor:bidirectional_edge}
  If $\gcg$ is a strongly causal DAG such that $i \pwgc j$ and
  $j \pwgc i$, then $i \not\in \anc{j}$ and $j \not\in \anc{i}$.  In
  particular, a pairwise bidirectional edge indicates the absence of
  any edge in $\gcg$.
\end{corollary}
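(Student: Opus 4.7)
The plan is to derive this corollary as a direct symmetric application of the preceding Corollary \ref{cor:parent_corollary}. That corollary tells us that in a strongly causal DAG, the conjunction of $i \pwgc j$ with $j \in \anc{i}$ is impossible. Since the hypothesis $i \pwgc j$ and $j \pwgc i$ is symmetric in $(i,j)$, we can simply invoke this impossibility result twice, once for each orientation, to knock out both $j \in \anc{i}$ and $i \in \anc{j}$.

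In more detail, the first step is to apply Corollary \ref{cor:parent_corollary} with the roles $(i,j)$ as written in its statement to the hypothesis $i \pwgc j$, which immediately yields $j \notin \anc{i}$. The second step is to apply the same corollary with the roles of $i$ and $j$ swapped to the hypothesis $j \pwgc i$, which yields $i \notin \anc{j}$. The ``in particular'' clause then requires one small observation: if there were any edge $(i,j) \in \gcge$ or $(j,i) \in \gcge$, then one of $i \in \pa{j} \subseteq \anc{j}$ or $j \in \pa{i} \subseteq \anc{i}$ would hold, contradicting what we just established. Hence no edge between $i$ and $j$ can appear in $\gcg$.

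There is no real obstacle in this argument; the entire content rests on Corollary \ref{cor:parent_corollary}, whose proof is the substantive step (and has already been given by combining Proposition \ref{prop:ancestor_properties} with Proposition \ref{prop:sc_graph_common_anc} to rule out confounders in the strongly causal DAG setting). The only thing worth flagging is that the DAG assumption is essential here: without acyclicity, $j \in \anc{i}$ and $i \in \anc{j}$ can coexist, and the contrapositive route used in Corollary \ref{cor:parent_corollary}, which relied on $i \notin \anc{j}$ being automatic from acyclicity once $j \in \anc{i}$, would break. Thus the corollary should be stated and proved exactly in the DAG regime, and the proof itself reduces to two invocations of the previous corollary plus the short edge-to-ancestor observation above.
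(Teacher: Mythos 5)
Your proof is correct and matches the paper's own argument exactly: the paper proves this corollary by "applying Corollary \ref{cor:parent_corollary} to $i \pwgc j$ and $j \pwgc i$," which is precisely your two symmetric invocations. The small additional observation you make for the ``in particular'' clause (an edge gives a parent, hence an ancestor) is the intended, if unstated, reading.
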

\begin{proof}
  This follows directly from applying Corollary
  \ref{cor:parent_corollary} to $i \pwgc j$ and $j \pwgc i$.
\end{proof}

In light of Proposition \ref{prop:sc_graph_common_anc}, the following
provides a partial converse to Proposition
\ref{prop:ancestor_properties}, and supports the intuition of ``causal
flow'' through paths in $\gcg$.

\begin{proposition}
  \label{prop:pwgc_anc}
  If $\gcg$ is a strongly causal DAG then $j \in \anc{i} \Rightarrow j \pwgc i$.
\end{proposition}

We immediately obtain the corollary, which we remind the reader is,
surprisingly, not true in a general graph.

\begin{corollary}
  \label{cor:gc_implies_pwgc}
  If $\gcg$ is a strongly causal DAG then $j \gc i \Rightarrow j \pwgc i$.
\end{corollary}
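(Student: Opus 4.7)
The plan is to chain together two results already established in the paper, using only the definitions of $\gc$, $\pa{\cdot}$, and $\anc{\cdot}$ as a bridge. The key observation is that a parent is a (level-$1$) ancestor, so $\pa{i} \subseteq \anc{i}$ follows directly from the definition of $\ancn{\ell}{i}$ as the union $\bigcup_{\lambda \le \ell}\gpn{\lambda}{i}$.

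First I would unpack $j \gc i$ via Theorem \ref{thm:granger_causality_equivalences}: the equivalence of items (1) and (2) there gives $j \gc i \iff \B_{ij}(z) \ne 0 \iff j \in \pa{i}$. Next I would invoke the containment $\pa{i} \subseteq \anc{i}$ to conclude $j \in \anc{i}$. Finally, since $\gcg$ is assumed to be a strongly causal DAG, Proposition \ref{prop:pwgc_anc} applies and yields $j \pwgc i$, completing the argument.

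There is essentially no obstacle here; the corollary is a direct concatenation of the Granger-causality equivalences with Proposition \ref{prop:pwgc_anc}. The only ``subtlety'' worth mentioning explicitly in the write-up is that the strongly causal DAG hypothesis is required to invoke Proposition \ref{prop:pwgc_anc} (without it, Examples \ref{ex:diamond_cancellation} and \ref{ex:lag_cancellation} show that even $j \in \pa{i}$ need not imply $j \pwgc i$), which justifies the corollary being flagged as ``surprisingly, not true in a general graph.''
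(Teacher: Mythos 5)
Your proposal is correct and matches the paper's (implicit) argument exactly: the corollary is stated immediately after Proposition \ref{prop:pwgc_anc} precisely because $j \gc i$ means $j \in \pa{i} = \gpn{1}{i} \subseteq \anc{i}$, at which point that proposition applies. Nothing further is needed.
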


\begin{example}
  As a final remark of this subsection we note that a complete
  converse to Proposition \ref{prop:ancestor_properties} is not
  possible without additional conditions.  Consider the ``fork''
  system on $3$ nodes (i.e. $2 \leftarrow 1 \rightarrow 3$) defined by

  \begin{equation*}
    x(t) =
    \left[
      \begin{array}{cccc}
        0 & 0 & 0\\
        a & 0 & 0\\
        a & 0 & 0\\
      \end{array}
    \right] x(t - 1) + v(t).
  \end{equation*}

  In this case, node $1$ is a confounder for nodes $2$ and $3$, but
  $x_3(t) = v_3(t) - v_2(t) + x_2(t)$ and $2 \npwgc 3$ (even
  though $x_2(t)$ and $x_3(t)$ are contemporaneously correlated)

  If we were to augment this system by simply adding an autoregressive
  component (i.e. some ``memory'') to $x_1(t)$ e.g.
  $x_1(t) = v_1(t) + b x_1(t - 1)$ then we \textit{would} have
  $2 \pwgc 3$ since then
  $x_3(t) = v_3(t) + av_1(t - 1) - bv_2(t - 1) + bx_2(t - 1)$.  We
  develop this idea further in the next section.
\end{example}

\subsection{Persistent Systems}
\label{sec:persistent_systems}
In section \ref{sec:strongly_causal_graphs} we obtained a converse to
part $(a)$ of Proposition \ref{prop:ancestor_properties} via the
notion of a strongly causal graph topology (see Proposition
\ref{prop:pwgc_anc}).  In this section, we study conditions under
which a converse to part $(b)$ will hold.

\begin{definition}[Lag Function]
  Given a causal filter $\B(z) = \sum_{\tau = 0}^\infty b(\tau)z^{-\tau}$
  define 

  \begin{align}
    \tau_0(\B) &= \text{inf }\{\tau \in \Z_+\ |\ b(\tau) \ne 0\},\\
    \tau_{\infty}(\B) &= \text{sup }\{\tau \in \Z_+\ |\ b(\tau) \ne 0\}.
  \end{align}

  i.e. the ``first'' and ``last'' coefficients of the filter $\B(z)$,
  where $\tau_\infty(\B) \defeq \infty$ if the filter has an infinite
  length, and $\tau_0(\B) \defeq \infty$ if $\B(z) = 0$.
\end{definition}

\begin{definition}[Persistent]
  We will say that the process $x(t)$ with Granger causality graph
  $\gcg$ is \textit{persistent} if for every $i \in [n]$ and every
  $k \in \anc{i}$ we have $\tau_0(\A_{ik}) < \infty$ and $\tau_\infty(\A_{ik}) = \infty$.
\end{definition}

\begin{remark}
  In the context of Granger causality, ``most'' systems should be
  persistent.  In particular, $\mathsf{\VAR}(p)$ models are likely to
  be persistent since these naturally result in an equivalent
  $\mathsf{MA}(\infty)$ representation, see Example
  \ref{ex:persistent_system}.

  Moreover, persistence is not the weakest condition necessary for the
  results of this section, the condition that for each $i, j$ there is
  some $k \in \anc{i}\cap\anc{j}$ such that
  $\tau_0(\A_{jk}) < \tau_\infty(\A_{ik})$ is enough.  The intuition
  being that nodes $i$ and $j$ are not receiving temporally disjoint
  information from $k$.

  The etymology for the persistence condition can be explained by
  supposing that the two nodes $i, j$ each have a loop (i.e.
  $\B_{ii}(z) \ne 0, \B_{jj}(z) \ne 0$) then this autoregressive
  component acts as ``memory'', and so the influence from the
  confounder $k$ \textit{persists} in $x_i(t)$, and
  $\tau_\infty(\A_{ik}) = \infty$ for each confounder is expected.
\end{remark}

\begin{example}
  \label{ex:persistent_system}
  Consider a process $x(t)$ generated by the $\VAR(1)$
  model\footnote{Recall that any $\VAR(p)$ model with $p < \infty$ can
    be written as a $\VAR(1)$ model, so we lose little generality in
    considering this case.}  having $\B(z) = Bz^{-1}$.  If $B$ is
  diagonalizable, and has at least $2$ distinct eigenvalues, then
  $x(t)$ is persistent.

  See the supplementary material for an analysis of this example.
\end{example}

In order to eliminate the possibility of a particular sort of
cancellation, an ad-hoc assumption is required.  Strictly speaking,
the persistence condition is not a necessary or sufficient condition
for the following, but cases where the following fails to hold, and
persistence \textit{does} hold, are unavoidable pathologies.

\begin{assumption}
  \label{ass:T_causality}
  Fix $i, j \in [n]$ and let $H_i(z)$ be the strictly-causal filter
  such that

  \begin{equation*}
    H_i(z)x_i(t) = \linE{x_i(t)}{\H_{t - 1}^{(i)}},
  \end{equation*}

  and similarly for $H_j(z)$.  Then define

  \begin{equation}
    \label{eqn:T_filter}
    T_{ij}(z) = \sum_{k \in \anc{i} \cap \anc{j}}\sigma_k^2\A_{ik}(z^{-1})(1 - H_i(z^{-1}))(1 - H_j(z))\A_{jk}(z),
  \end{equation}

  where $\sigma_k^2 = \E v_k(t)^2$.

  We will say that Assumption \ref{ass:T_causality} is satisfied if
  for every $i, j \in [n]$, $T_{ij}(z)$ is either constant over $z$
  (i.e. each $z^k$ coefficient for $k \in \Z \setminus \{0\}$ is $0$),
  or is \textit{neither} causal (i.e. containing only $z^{-k}$ terms,
  for $k \ge 0$) \textit{or} anti-causal (i.e. containing only $z^k$
  terms, for $k \ge 0$).  Put succinctly, $T_{ij}(z)$ must be
  two-sided.
\end{assumption}

\begin{remark}
  Under the condition of persistence, the only way for Assumption
  \ref{ass:T_causality} to fail is through cancellation in the terms
  defining $T_{ij}(z)$.  For example, the condition is assured if
  $x(t)$ is persistent, and there is only a single confounder.
  Unfortunately, some pathological behaviour resulting from
  confounding nodes seems to be unavoidable without some assumptions
  about the parameters of the $\MA(\infty)$ system defining $x(t)$.
\end{remark}

\begin{proposition}
  \label{prop:persistence_converse}
  Fix $i, j \in [n]$ and suppose $\exists k \in \anc{i} \cap \anc{j}$
  which confounds $i, j$.  Then, if $T_{ij}(z)$ is not causal we have
  $j \pwgc i$, and if $T_{ij}(z)$ is not anti-causal we have
  $i \pwgc j$.  Moreover, if Assumption \ref{ass:T_causality} is
  satisfied, then $j \pwgc i \iff i \pwgc j$.
\end{proposition}

\begin{remark}
  The importance of this result is that when $i \pwgc j$ is a result
  of a confounder $k$, then $i \pwgc j \iff j \pwgc i$.  This
  implies that in a strongly causal graph every bidirectional pairwise
  causality relation must be the result of a confounder.  Therefore,
  in a strongly causal graph, pairwise causality analysis is
  \textit{immune to confounding} (since we can safely remove all
  bidirectional edges).
\end{remark}

\subsection{Recovering $\gcg$ via Pairwise Tests}
\label{sec:pairwise_algorithm}
We arrive at the main conclusion of the theoretical analysis in this
paper.

\begin{theorem}[Pairwise Recovery]
  \label{thm:scg_recovery}
  If the Granger causality graph $\gcg$ for the process $x(t)$ is a
  strongly causal DAG and Assumption \ref{ass:T_causality} holds, then
  $\gcg$ can be inferred from pairwise causality tests.  The procedure
  can be carried out, assuming we have an oracle for pairwise
  causality, via Algorithm (\ref{alg:pwgr}).
\end{theorem}

\begin{algorithm}
  \SetKwInOut{Input}{input}
  \SetKwInOut{Output}{output}
  \SetKwInOut{Initialize}{initialize}
  \DontPrintSemicolon

  \caption{Pairwise Granger Causality Algorithm}
  \label{alg:pwgr}
  \TitleOfAlgo{Pairwise Graph Recovery}
  \Input{Pairwise Granger causality relations between a persistent
  process of dimension $n$ whose joint Granger causality
  relations are known to form a strongly causal DAG $\gcg$.}

  \Output{Edges $\gcge = \{(i, j) \in [n] \times [n]\ |\ i \gc j \}$ of
    the graph $\gcg$.}
  \Initialize{$S_0 = [n]$  \texttt{\# unprocessed nodes}\\
    $E_0 = \emptyset$  \texttt{\# edges of }$\gcg$\\

    $k = 1$ \texttt{\# a counter used only for notation}}
  \BlankLine
  $W \leftarrow \{(i, j)\ |\ i \pwgc j, j \npwgc i\}$  \texttt{\# candidate edges}\\
  $P_0 \leftarrow \{i \in S_0\ |\ \forall s \in S_0\ (s, i) \not\in W\}$  \texttt{\# parent-less nodes}\\
  \While{$S_{k - 1} \ne \emptyset$}{
    $S_k \leftarrow S_{k - 1} \setminus P_{k - 1}$ \texttt{\# remove nodes with depth }$k - 1$\\
    $P_k \leftarrow \{i \in S_k\ |\ \forall s \in S_k\ (s, i) \not\in W\}$   \texttt{\# candidate children}\\

    $D_{k0} \leftarrow \emptyset$\\
    \For{$r = 1, \ldots, k$} 
    {
      $Q \leftarrow E_{k - 1} \cup \big(\bigcup_{\ell = 0}^{r - 1} D_{k\ell}\big)$ \texttt{\# currently known edges}\\
      $D_{kr} \leftarrow \{(i, j) \in P_{k - r} \times P_k\ |\ (i, j) \in W,\ \text{no } i \rightarrow j \text{ path in } Q\}$
    }
    $E_k \leftarrow E_{k - 1} \cup \big(\bigcup_{r = 1}^k D_{kr}\big)$ \texttt{\# update } $E_k$ \texttt{ with new edges}\\
    $k \leftarrow k + 1$
  }
  \Return{$E_{k - 1}$}
\end{algorithm}

The theorem is proven in the supplementary material by establishing
the correctness of Algorithm (\ref{alg:pwgr}).  The idea is to
iteratively ``peel away layers'' of nodes by removing the nodes that
have no parents remaining.  The requirement of strong causality
ensures that all actual edges of $\gcg$ manifest in some way as
pairwise relations (by Proposition \ref{prop:pwgc_anc}), and the
no-cancellation condition of Assumption \ref{ass:T_causality} allows
confounding to be eliminated by removing bidirectional edges (by
Proposition \ref{prop:persistence_converse} and Corollary
\ref{cor:bidirectional_edge}).  Without Assumption
\ref{ass:T_causality}, then each confounded pair would give rise to
$4$ possible pairwise topologies consistent with $\gcg$, one for each
type of pairwise edge (no edge, unidirectional, bidirectional).

\begin{example}
  The set $W$ collects ancestor relations in $\gcg$ (see Lemma
  \ref{lem:W_subset_E}).  In reference to Figure
  \ref{fig:example_fig3}, each of the solid black edges, as well as
  the dotted red edges will be included in $W$, but \textit{not} the
  bidirectional green dash-dotted edges, which we are able to exclude
  as results of confounding.  The groupings $P_0, \ldots, P_3$ are also
  indicated in Figure \ref{fig:example_fig3}.

  The algorithm proceeds first with the parent-less nodes $1, 2$ on the
  initial iteration where the edge $(1, 3)$ is added to $E$.  On the
  next iteration, the edges $(3, 4), (2, 4), (3, 5)$ are added, and
  the false edges $(1, 4), (1, 5)$ are excluded due to the paths
  $1 \rightarrow 3 \rightarrow 4$ and $1 \rightarrow 3 \rightarrow 5$
  already being present.  Finally, edge $(4, 6)$ is added, and the false
  $(1, 6), (3, 6), (2, 6)$ edges are similarly excluded due to the
  ordering of the inner loop.
  
  \begin{figure}
    \centering
    \caption{Example graph for Algorithm \ref{alg:pwgr}}
    {\footnotesize{Black arrows indicate true parent-child
        relations.  Red dotted arrows indicate pairwise causality (due to
        non-parent relations), green dash-dotted arrows indicate
        bidirectional pairwise causality (due to the confounding node
        $1$).  Blue groupings indicate each $P_k$ in Algorithm
        \ref{alg:pwgr}.}}
    \label{fig:example_fig3}
    
    \begin{subfigure}[b]{0.45\textwidth}
      \includegraphics[width=\linewidth]{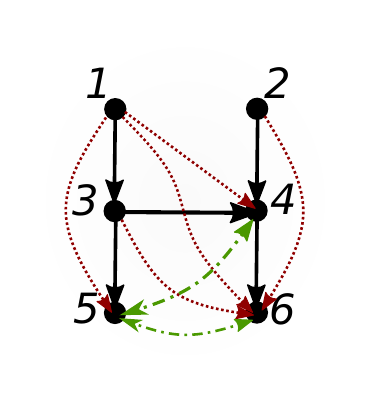}
    \end{subfigure}
    \begin{subfigure}[b]{0.45\textwidth}
      \includegraphics[width=\linewidth]{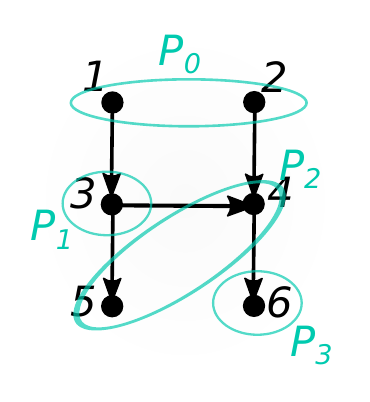}
    \end{subfigure}
  \end{figure}

  That we need to proceed backwards through $P_{k - r}$ as in the
  inner loop on $r$ can also be seen from this example, where if
  instead we simply added the set

  \begin{equation*}
    D_k' = \{(i, j) \in \Big(\bigcup_{r = 1}^k P_{k - r}\Big) \times P_k\ |\ i \pwgc j \}
  \end{equation*}

  to $E_k$ then we would infer the false positive edge
  $1 \rightarrow 4$.  Moreover, the same example shows that simply
  using the set

  \begin{equation*}
    D_k'' = \{(i, j) \in P_{k - 1} \times P_k\ |\ i \pwgc j \}  ,
  \end{equation*}

  causes the edge $1 \rightarrow 3$ to be missed.
\end{example}

\section{Simulation}
\label{sec:simulation}
We implement an heuristic inspired by Algorithm \ref{alg:pwgr} by
replacing the population statistics with finite sample tests, the
details of which can be found in the supplementary material Section
\ref{sec:finite_pwgc} (see Algorithm \ref{alg:finite_pwgc}).  The
heuristic is essentially controlling the false discovery rate
substantially below what it would be with a threshold based pairwise
scheme.  The methods are easily parallelizable, and can scale to
graphs with thousands of nodes on a single machine.  By contrast,
scaling the LASSO to this large of a network (millions of variables)
is nontrivial and extremely computationally demanding.

We run experiments using two separate graph topologies having $n = 50$
nodes: a strongly causal graph (SCG) and a directed acyclic graph
(DAG).  Consult Section \ref{apx:simulation} for details on how data
is generated from these models.

We compare our results against the adaptive LASSO
\cite{adaptive_lasso_zou2006}, which outperformed both the LASSO and
the grouped LASSO by a large margin.  Motivated by scaling, we split the squared error
term into separate terms, one for each group of incident edges on a
node, and estimate the collection of $n$ incident filters
$\big\{\B_{ij}(z)\big\}_{j = 1}^n$ that minimizes
$\xi_i^{\text{LASSO}}$ in the following:

\begin{equation}
  \begin{aligned}
  \xi_i^{\text{LASSO}}(\lambda) &= \underset{B}{\text{min}}\ \frac{1}{T}\sum_{t = p_{\text{max}} + 1}^T\big(x_i(t) - \sum_{\tau = 1}^{p_{\text{max}}}\sum_{j = 1}^n B_{i, j}(\tau) x(t - \tau)\big)^2 + \lambda \sum_{\tau = 1}^{p_{\text{max}}} \sum_{j = 1}^n |B_{ij}(\tau)|\\
  \xi_i^{\text{LASSO}} &= \underset{\lambda \ge 0}{\text{min}}\ \xi_i^{\text{LASSO}}(\lambda) + \mathsf{BIC}\big(B_i^{\text{LASSO}}(\lambda)\big)\\
  \end{aligned}
\end{equation}

where we are choosing $\lambda$, the regularization parameters, via the BIC.
This is similar to the work of \cite{arnold2007temporal}, except that
we have replacing the LASSO with the Adaptive LASSO, which provides
dramatically superior performance.

\begin{figure}
  \centering
  \caption{Representative Random Graph Topologies on $n = 50$ Nodes}
  \label{fig:random_graph_topologies}
  \begin{subfigure}[b]{0.3\textwidth}
    \caption{SCG}
    \includegraphics[width=\linewidth]{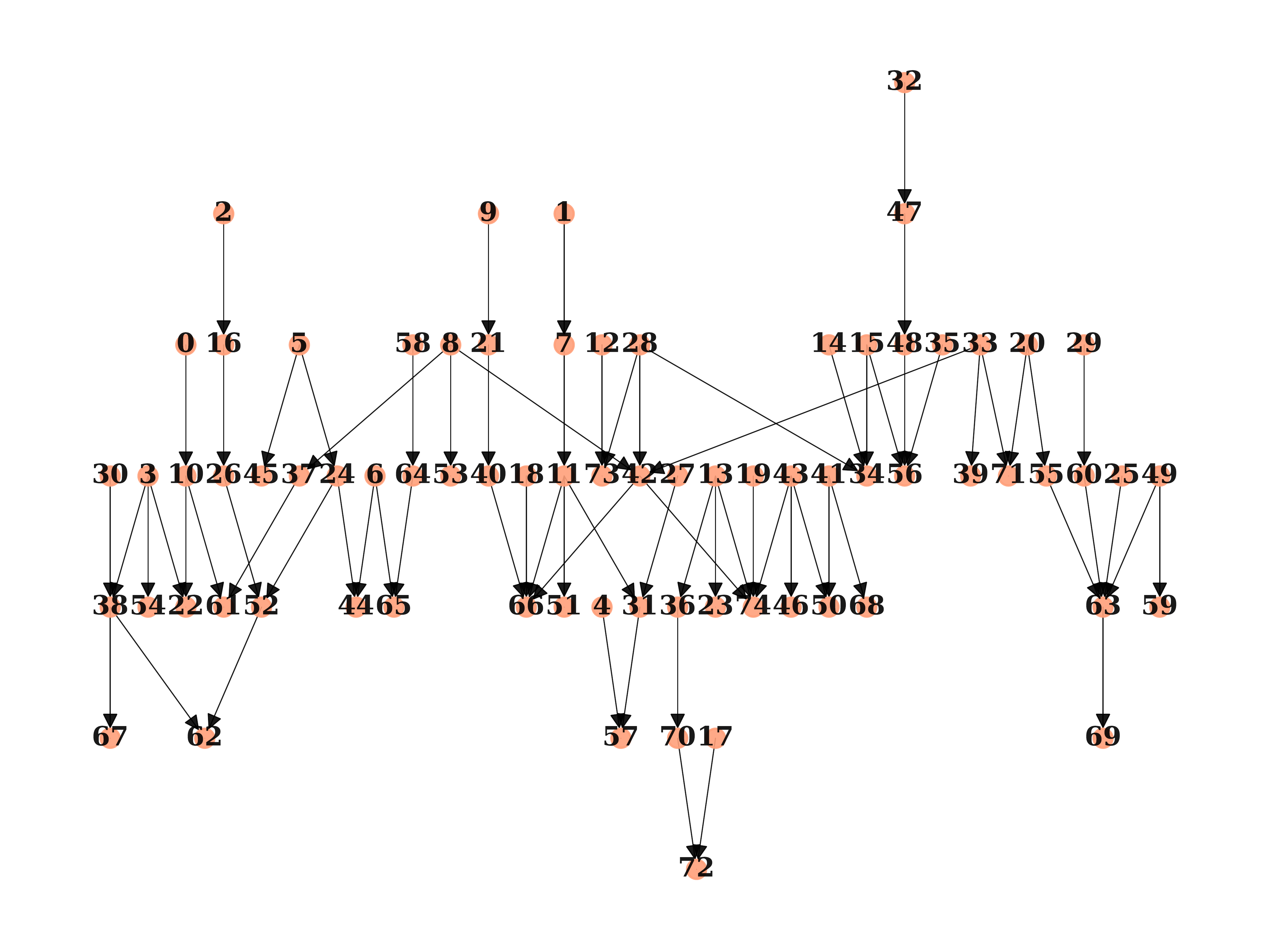}
  \end{subfigure}
  \begin{subfigure}[b]{0.3\textwidth}
    \caption{DAG $(q = \frac{2}{n})$}
    \includegraphics[width=\linewidth]{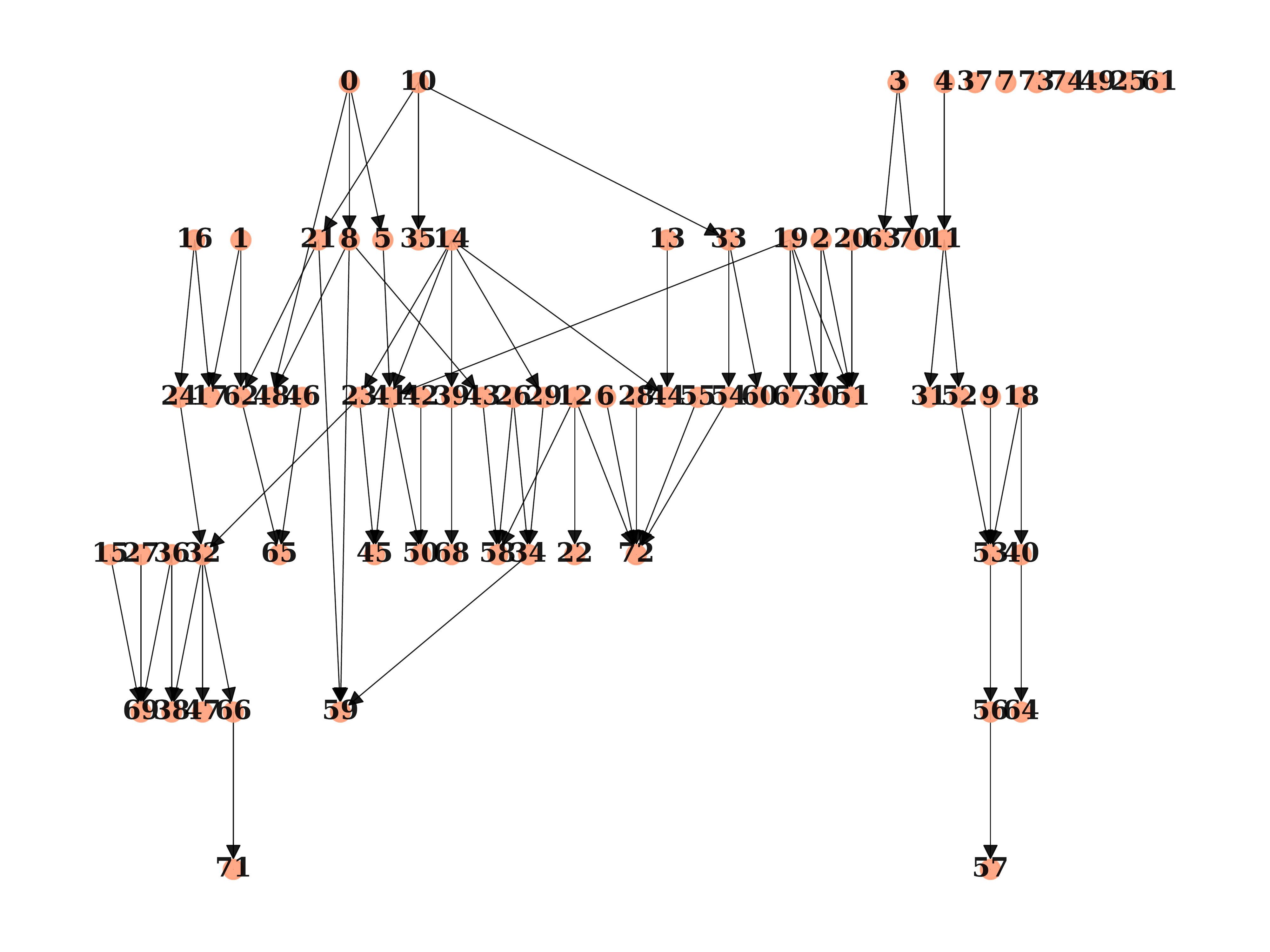}
  \end{subfigure}
  \begin{subfigure}[b]{0.3\textwidth}
    \caption{DAG $(q = \frac{4}{n})$}
    \includegraphics[width=\linewidth]{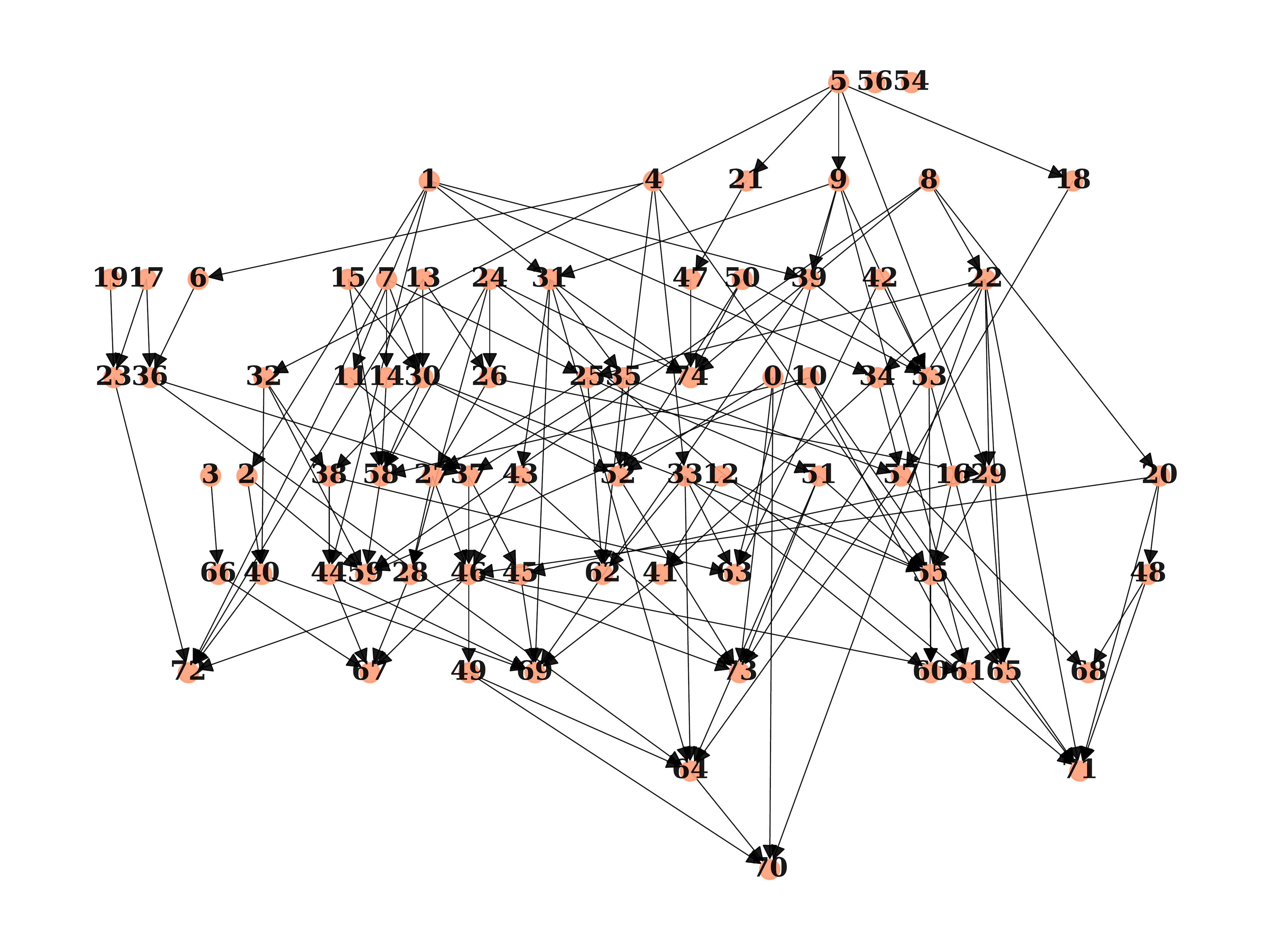}
  \end{subfigure}
\end{figure}

\begin{remark}[Graph Topologies]
  We depict in Figure \ref{fig:random_graph_topologies} the topologies
  of random graphs used in our empirical evaluation.  For values of
  $q$ close to $\frac{2}{n}$, the resulting random graphs tend to have
  a topology which is, at least qualitatively, close to the SCG.  As
  the value of $q$ increases, the random graphs deviate farther from
  the SCG topology, and we therefore expect the LASSO to outperform
  PWGC for larger values of $q$.
\end{remark}

\begin{remark}[MCC as a Support Recovery Measurement]
  We apply Matthew's Correlation Coefficient (MCC)
  \cite{matthews1975comparison} as a statistic for measuring support
  recovery performance (see also \cite{chicco2017ten} tip \# 8).  This
  statistic synthesizes the confusion matrix into a single score
  appropriate for unbalanced labels and is calibrated to fall into the
  range $[-1, 1]$ with $1$ being perfect performance, $0$ being the
  performance of random guessing, and $-1$ being perfectly opposed.
\end{remark}

\begin{remark}[Error Measurement]
  We estimate the 1-step ahead prediction error by forming the variance matrix estimate

  \begin{equation*}
    \widehat{\Sigma}_v \defeq \frac{1}{T_{\text{out}}} \sum_{t = 1}^{T_{\text{out}}} (x(t) - \widehat{x}(t))(x(t) - \widehat{x}(t))^\T
  \end{equation*}

  on a long stream of out-of-sample data.  We then report the quantity

  \begin{equation*}
    \frac{\ln \tr \widehat{\Sigma}_v}{\ln \tr \Sigma_v}
  \end{equation*}

  where $\widehat{\Sigma}_v = \Sigma_v$ is the best possible performance.
\end{remark}

\subsection{Results}
\begin{figure}[!h]
  \centering
  \caption{Simulation Results: PWGC vs AdaLASSO}
  \label{tab:simulation_table}

  \begin{tabular}{|ll||ll|ll|ll|}
    \toprule
    &\textbf{Algorithm}&alasso&pwgc&alasso&pwgc&alasso&pwgc\\
    &\textbf{Metric}&LRE&&FDP&&MCC&\\
    \textbf{T}&\textbf{q}&&&&&&\\
    \midrule
    \multirow{4}{*}{\textbf{50}}
    &\textbf{SCG}&1.71&\textbf{1.55}&0.52&\textbf{0.08}&0.46&\textbf{0.55}\\
    &\textbf{0.04}&1.97&\textbf{1.77}&0.57&\textbf{0.10}&0.41&\textbf{0.53}\\
    &\textbf{0.08}&2.95&\textbf{2.72}&0.50&\textbf{0.23}&0.36&\textbf{0.39}\\
    &\textbf{0.32}&9.02&\textbf{8.17}&\textbf{0.53}&0.56&\textbf{0.14}&0.10\\
    \midrule
    \multirow{4}{*}{\textbf{250}}
    &\textbf{SCG}&1.30&\textbf{1.18}&0.29&\textbf{0.06}&0.70&\textbf{0.81}\\
    &\textbf{0.04}&1.40&\textbf{1.31}&0.30&\textbf{0.07}&0.68&\textbf{0.76}\\
    &\textbf{0.08}&2.49&\textbf{2.21}&0.32&\textbf{0.16}&0.55&\textbf{0.57}\\
    &\textbf{0.32}&8.67&\textbf{7.62}&0.48&0.46&\textbf{0.18}&0.15\\
    \midrule
    \multirow{4}{*}{\textbf{1250}}
    &\textbf{SCG}&1.20&\textbf{1.11}&0.41&\textbf{0.07}&0.68&\textbf{0.88}\\
    &\textbf{0.04}&1.28&\textbf{1.20}&0.46&\textbf{0.07}&0.64&\textbf{0.84}\\
    &\textbf{0.08}&2.12&2.05&0.36&\textbf{0.14}&0.60&\textbf{0.64}\\
    &\textbf{0.32}&7.78&\textbf{7.39}&0.49&\textbf{0.37}&\textbf{0.21}&0.18\\
    \bottomrule
  \end{tabular}

  {\footnotesize Results of Monte Carlo simulations comparing PWGC and
    AdaLASSO $(n = 50, p = 5, p_{\text{max}} = 10)$ for small
    samples and when the SCG assumption doesn't hold.  The superior
    result is bolded when the difference is statistically
    significant, as measured by \texttt{scipy.stats.ttest\_rel}.
    100 iterations are run for each
    set of parameters.\\

    LRE: Log-Relative-Error, i.e. the log sum of squared errors at
    each node relative to the strength of the driving noise
    $\frac{\ln \tr \widehat{\Sigma}_v}{\ln \tr \Sigma_v}$.  FDP: False
    Discovery Proportion.  MCC: Matthew's Correlation Coefficient.\\

    Values of $q$ (edge probability) range between
    $2 / n, 4 / n, 16 / n$ where $2 / n$ has the property that the
    random graphs have on average the same number of edges as the
    SCG.}
\end{figure}

Our simulation results are summarized in Table
\ref{tab:simulation_table}, with additional figures provided in the
supplementary material Section \ref{apx:simulation}.  It is clear that
the superior performance of PWGC in comparison to AdaLASSO is as a
result of limiting the false discovery rate.  It is unsurprising that
PWGC exhibits superior performance when the graph is an SCG, but even
in the case of more general DAGs, the PWGC heuristic is still able to
more reliably uncover the graph structure for small values of $q$.  We
would conjecture that for small $q$, random graphs are ``likely'' to
be ``close'' to SCGs in some appropriate sense.  As $q$ increases,
there are simply not enough edges allowed by the SCG topology for it
to be possible to accurately recover $\gcg$.

Interestingly, we can observe that the AdaLASSO appears to perform
marginally better on strongly causal graphs than directed acyclic
graphs with an equivalent number of edges ($q \approx 0.04$ is chosen for
this purpose).  This provides supporting evidence for one of the main
assertions of this work: that the topological structure of $\gcg$ is
an important distinguishing feature of time series networks in
comparison to classical multivariate regression where it is only the
sparsity rate which is considered.

\section{Conclusion}
\label{sec:conclusion}
In this paper we have argued that considering particular topological
properties of Granger causality networks can provide substantial
insights into the structure of causality graphs with potential for
providing improvements to causality graph estimation when structural
assumptions are met.  In particular, the notion of a strongly-causal
graph has been exploited to establish conditions under which pairwise
causality testing alone is sufficient for recovering a complete
Granger causality graph.  Moreover, examples from the literature
suggest that such topological assumptions may be reasonable in some
applications.  And secondly, even when the strong-causality assumption
is not met, we have provided simulation evidence to suggest that our
pairwise testing algorithm PWGC can still outperform the LASSO and
adaLASSO, both of which are commonly employed in applications.

We emphasize that the causality graph topology is one of the key
defining features of time series analysis in comparison to standard
multivariate regression and therefore advocate for further study of
how different topological assumptions may impact the recovery of
causality graphs.  For example, are there provable guarantees on the
error rate of PWGC when applied to non strongly-causal graphs?  Can
constraint systems or cunning adaptive weighting schemes impose useful
prior knowledge about graph topology for the LASSO algorithm?
Finally, the work of \cite{barnett2015granger} has established the
superiority of Granger causality testing by state space models (as
opposed to pure autoregressions) in many cases.  Combining this work
with our PWGC algorithm (by modifying the approach described in
Section \ref{sec:pairwise_hypothesis_testing} to instead utilize
state-space Granger causality testing) therefore is likely to enable
application to very large networks of time series data which are not
well approximated by finite $\VAR(p)$ models.  Moreover, our
heuristics are in principle applicable in a model-free context.  As
long as a primitive for testing the pairwise causation between two
components of a multivariate time series is available, our methods may
be useful.

\appendix

\begin{appendices}
  \numberwithin{equation}{section}


\section{Overview}
We restate our main results and provide detailed proofs.  Simple
Corollaries have their proofs in the main text, and are occasionally
referenced here.  The main Theorem is proven in Section
\ref{apx:proof_main_theorem}, and all of the building blocks are
established in Section \ref{apx:ancillary_results}.

We detail the methods used for our simulations and finite sample
implementation in Section \ref{sec:structure_learning} and provide
additional simulation results in Section \ref{apx:simulation}.

References to equations, lemmas, etc., in this document are prefixed
with their section, whereas prefix-free equation numbers refer to the
main document.  e.g. ``Equation (1)'' refers to the first equation in
the main document, and ``Equation (A.1)'' refers to the first equation
in this document.

Code will be made available at
\url{https://github.com/RJTK/granger_causality}, as well as
accompanying this supplementary material.

\section{Proofs}
\subsection{Preparatory Results}
\label{apx:ancillary_results}
\begin{theorem}[Granger Causality Equivalences \ref{thm:granger_causality_equivalences}]
  The following are equivalent:

  \begin{enumerate}
  \item{$x_j \ngc x_i$}
  \item{$\forall \tau \in \N_+\ B_{ij}(\tau) = 0$ i.e. $\B_{ij}(z) = 0$}
  \item{$H_t^{(i)} \perp \H_{t - 1}^{(j)}\ |\ \H_{t - 1}^{(-j)}$}
  \item{$\linE{x_i(t)}{\H_{t - 1}^{(-j)}} = \linE{x_i(t)}{\H_{t - 1}}$}
  \end{enumerate}
\end{theorem}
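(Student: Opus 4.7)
My plan is to establish the chain $(1) \iff (4) \iff (3)$ together with $(4) \iff (2)$, so that all four statements are mutually equivalent. The equivalences $(1) \iff (4)$ and $(3) \iff (4)$ are essentially Hilbert-space bookkeeping. For the first, I would use uniqueness of the orthogonal projection: since $\H_{t-1}^{(-j)} \subseteq \H_{t-1}$, equality of the two projection errors forces the two projections themselves to coincide (the minimizer over the smaller closed subspace already lies in the larger one), and the converse is immediate. For the second, I would invoke the alternate form of conditional orthogonality from Definition \ref{lem:conditional_orthogonality_equivalence}, using its symmetry in the first two arguments and the identity $\H_{t-1}^{(j)} \vee \H_{t-1}^{(-j)} = \H_{t-1}$: this collapses $H_t^{(i)} \perp \H_{t-1}^{(j)} \mid \H_{t-1}^{(-j)}$ to $\linE{x_i(t)}{\H_{t-1}} = \linE{x_i(t)}{\H_{t-1}^{(-j)}}$, which is (4).

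For $(2) \Rightarrow (4)$, I would just read the projection off the VAR representation. The hypotheses in Definition \ref{def:basic_setup} (sequential uncorrelation and diagonal $\Sigma_v$) imply $v_i(t) \perp \H_{t-1}$, so $\linE{x_i(t)}{\H_{t-1}} = \sum_k \B_{ik}(z) x_k(t)$. When $\B_{ij}(z) = 0$ this sum already lies in $\H_{t-1}^{(-j)}$, and because the residual $v_i(t)$ is also orthogonal to $\H_{t-1}^{(-j)}$, this same element must equal $\linE{x_i(t)}{\H_{t-1}^{(-j)}}$, giving (4).

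The subtlest step, and the main obstacle, is $(4) \Rightarrow (2)$. The strategy is to produce a second $\VAR$-type representation and invoke uniqueness. Assuming (4), the original innovation satisfies $v_i(t) = x_i(t) - \linE{x_i(t)}{\H_{t-1}^{(-j)}}$, so in particular $v_i(t) \perp \H_{t-1}^{(-j)}$. By WSS of $x$, this projection is realized by a fixed causal LTI filter acting on $x_{-j}$, yielding an alternative decomposition $x_i(t) = \sum_{k \ne j} \tilde{\B}_{ik}(z) x_k(t) + v_i(t)$ with $v_i(t) \perp \H_{t-1}$ by (4). Comparing with the $i$-th row of the original $\VAR(\infty)$ representation and invoking uniqueness of the VAR coefficients (which follows from the invertibility assumption of Definition \ref{def:basic_setup}, equivalently the non-degeneracy of the spectral density) forces $\B_{ij}(z) = 0$. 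The main care needed here is justifying that stationarity does yield a time-invariant filter for the projection onto $\H_{t-1}^{(-j)}$, and ruling out that a nontrivial filter on $x_j$'s past could coincide with a filter on $x_{-j}$'s past; both are consequences of the non-deterministic, full-rank spectral structure baked into the basic setup.
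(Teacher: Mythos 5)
Your proposal is correct and uses essentially the same ingredients as the paper's proof (uniqueness of the orthogonal projection, orthogonality of the innovation $v_i(t)$ to $\H_{t-1}$ from the VAR representation, and the stated equivalence for conditional orthogonality), merely arranged as a hub at (4) rather than the paper's cycle $(1)\Rightarrow(2)\Rightarrow(3)\Leftrightarrow(4)\Rightarrow(1)$. Your $(4)\Rightarrow(2)$ step, with its explicit appeal to uniqueness of the time-invariant filter representation via the non-degenerate spectral density, is exactly the content the paper compresses into the remark that ``the best estimate is necessarily the coefficients of the model.''
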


\begin{proof}
  $(a) \Rightarrow (b)$ follows as a result of the uniqueness of orthogonal
  projection (i.e. the best estimate is necessarily the coefficients
  of the model).  $(b) \Rightarrow (c)$ follows since in computing
  $(y - \linE{y}{\H_{t - 1}^{(-j)}})$ for $y \in H_t^{(i)}$ it is sufficient
  to consider $y = x_i(t)$ by linearity, then since
  $H_{t - 1}^{(i)} \subseteq \H_{t - 1}^{(-j)}$ we have
  $(x_i(t) - \linE{x_i(t)}{\H_{t - 1}^{(-j)}}) = v_i(t)$ since
  $\B_{ij}(z) = 0$.  The result follows since
  $v_i(t) \perp \H_{t - 1}$.  $(c) \iff (d)$ is a result of the equivalence
  in Definition \ref{lem:conditional_orthogonality_equivalence}.  And,
  $(d) \implies (a)$ follows directly from the Definition.
\end{proof}

\begin{lemma}
  \label{lem:adj_matrix}
  Let $S$ be the transposed adjacency matrix\footnote{\footnotesize We
    are using the convention that $\B_{ij}(z)$ is a filter with input
    $x_j$ and output $x_i$ so as to write the action of the system as
    $\B(z)x(t)$ with $x(t)$ as a column vector.  This competes with
    the usual convention for adjacency matrices where $A_{ij} = 1$ if
    there is an edge $(i, j)$.  In our case, the sparsity pattern of
    $\B_{ij}$ is the \textit{transposed} conventional adjacency
    matrix.} of the Granger causality graph $\gcg$.  Then,
  $(S^k)_{ij}$ is the number of paths of length $k$ from node $j$ to
  node $i$.  Evidently, if $\forall k \in \N,\ (S^k)_{ij} = 0$ then
  $j \not\in \anc{i}$.
\end{lemma}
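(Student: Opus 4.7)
The plan is to prove this by a standard induction on the path length $k$, followed by a short deduction to obtain the ancestor statement as a corollary.

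First I would unfold the definition: since $S$ is the transposed adjacency matrix, $S_{ij} = 1$ if and only if $(j,i) \in \gcge$, i.e.\ there is an edge from $j$ to $i$, which is precisely the number of length-$1$ walks from $j$ to $i$. This handles the base case $k=1$. For the inductive step, assume $(S^k)_{\ell j}$ equals the number of length-$k$ walks from $j$ to $\ell$ for every pair $(\ell, j)$. Then by the definition of matrix multiplication,
\begin{equation*}
  (S^{k+1})_{ij} \;=\; \sum_{\ell = 1}^n S_{i\ell}\,(S^k)_{\ell j},
\end{equation*}
and each term $S_{i\ell}(S^k)_{\ell j}$ counts the number of length-$(k+1)$ walks from $j$ to $i$ whose penultimate vertex is $\ell$ (namely, a length-$k$ walk $j \to \cdots \to \ell$ followed by the edge $(\ell,i)$). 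Summing over $\ell$ partitions the set of length-$(k+1)$ walks from $j$ to $i$ by their penultimate vertex, so the total is exactly the number of such walks, completing the induction.

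The only subtlety to flag is the distinction between arbitrary walks and the paper's notion of \emph{path} (which may impose distinctness of consecutive edges or vertices). For the second claim, however, this distinction is harmless: every directed path in $\gcg$ is in particular a walk, so if no walk of any length from $j$ to $i$ exists, then certainly no path does. Thus if $(S^k)_{ij} = 0$ for all $k \in \N$, there is no directed path of any length from $j$ to $i$, so by the definition of $\anc{i} = \ancn{n}{i} = \bigcup_{\lambda \le n} \gpn{\lambda}{i}$, we conclude $j \notin \anc{i}$.

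The main obstacle is essentially nonexistent: this is textbook linear algebra combined with graph theory. The only thing requiring mild care is explicitly acknowledging that $(S^k)_{ij}$ counts walks (which may revisit vertices or edges) rather than simple paths, and noting that for the purpose of deducing $j \notin \anc{i}$ we only need the implication ``path $\Rightarrow$ walk,'' which is automatic. I would not bother with a bound like ``it suffices to check $k \le n$'' since the statement as given quantifies over all $k \in \N$.
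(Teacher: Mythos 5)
Your proof is correct and takes the same route as the paper, which simply states that the result is well known and follows by induction; you have filled in the standard inductive argument. Your remark distinguishing walks from the paper's notion of paths is a careful touch the paper glosses over, and you correctly observe that it is harmless for the ancestor conclusion.
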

\begin{proof}
  This is a well known theorem, proof follows by induction.
\end{proof}

\begin{proposition}[Ancestor Expansion]
  \label{prop:parent_expanding}
  The component $x_i(t)$ of $x(t)$ can be represented in terms of it's
  parents in $\gcg$:

  \begin{equation}
    \label{eqn:parent_expansion}
    x_i(t) = v_i(t) + \B_{ii}(z)x_i(t) + \sum_{k \in \pa{i}}\B_{ik}(z)x_k(t).
  \end{equation}

  Moreover, $x_i$ can be expanded in terms of it's ancestor's $v(t)$
  components only:

  \begin{equation}
    \label{eqn:ancestor_expansion}
    x_i(t) = \A_{ii}(z)v_i(t) + \sum_{\substack{k \in \anc{i} \\ k \ne i}}\A_{ik}(z)v_k(t),
  \end{equation}

  where $\A(z) = \sum_{\tau = 0}^\infty A(\tau)z^{-\tau}$ is the filter from
  the Wold decomposition representation of $x(t)$, Equation
  \eqref{eqn:wold}.
\end{proposition}

\begin{proof}
  Equation \eqref{eqn:parent_expansion} is immediate from the
  $\VAR(\infty)$ representation of Equation \eqref{eqn:ar_representation}
  and Theorem \ref{thm:granger_causality_equivalences}, we are left to
  demonstrate \eqref{eqn:ancestor_expansion}.
  
  From Equation \eqref{eqn:ar_representation}, which we are assuming
  throughout the paper to be invertible, we can write

  \begin{equation*}
    x(t) = (I - \B(z))^{-1} v(t),
  \end{equation*}

  where necessarily $(I - \B(z))^{-1} = \A(z)$ due to the uniqueness
  of the Wold decomposition.  Since $\B(z)$ is stable we have

  \begin{equation}
    \label{eqn:resolvant_inv}
    (I - \B(z))^{-1} = \sum_{k = 0}^\infty \B(z)^k.
  \end{equation}

  Invoking the Cayley-Hamilton theorem allows writing the infinite sum
  of \eqref{eqn:resolvant_inv} in terms of \textit{finite} powers of
  $\B$.

  Let $S$ be a matrix with elements in $\{0, 1\}$ which represents the
  sparsity pattern of $\B(z)$, from Lemma \ref{lem:adj_matrix} $S$ is
  the transpose of the adjacency matrix for $\gcg$ and hence
  $(S^k)_{ij}$ is non-zero if and only if $j \in \gpn{k}{i}$, and
  therefore $\B(z)^k_{ij} = 0$ if $j \not \in \gpn{k}{i}$.  Finally,
  since $\anc{i} = \bigcup_{k = 1}^n\gpn{k}{i}$ we see that
  $\A_{ij}(z)$ is zero if $j \not\in \anc{i}$.

  Therefore

  \begin{align*}
    x_i(t) &= [(I - \B(z))^{-1}v(t)]_i\\
           &= \sum_{j = 1}^n \A_{ij}(z) v_j(t)\\
           &= \A_{ii}(z) v_i(t) + \sum_{\substack{j \in \anc{i} \\ j \ne i}} \A_{ij}(z) v_j(t)
  \end{align*}
\end{proof}

\begin{proposition}
  \label{prop:separated_ancestor_uncorrelated}
  Consider distinct nodes $i, j$ in a Granger causality graph
  $\gcg$.  If

  \begin{enumerate}[label=(\alph*)]
  \item{$j \not\in \anc{i}$ and $i \not\in \anc{j}$}
  \item{$\anc{i}\cap\anc{j} = \emptyset$}
  \end{enumerate}

  then $\H_t^{(i)} \perp \H_t^{(j)}$, that is,
  $\forall s, \tau \in \Z_+\ \E[x_i(t - s)x_j(t - \tau)] = 0$.  Moreover,
  this means that $j \npwgc i$ and $\linE{x_j(t)}{\H_t^{(i)}} = 0$.
\end{proposition}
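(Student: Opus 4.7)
The plan is to invoke the Ancestor Expansion (Proposition \ref{prop:parent_expanding}) to rewrite both $x_i$ and $x_j$ as linear combinations of innovations drawn from disjoint index sets, and then exploit the diagonal-covariance, sequentially-uncorrelated structure of $v(t)$ (Definition \ref{def:basic_setup}) to conclude orthogonality. Concretely, for any $s \ge 0$ I will write
\[
x_i(t - s) \;=\; \A_{ii}(z) v_i(t - s) + \sum_{\substack{k \in \anc{i} \\ k \ne i}} \A_{ik}(z) v_k(t - s),
\]
and the analogous expansion for $x_j(t - \tau)$ indexed over $\{j\} \cup \anc{j}$.

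The first key step is to observe that the two index sets $\{i\} \cup \anc{i}$ and $\{j\} \cup \anc{j}$ are disjoint: hypothesis (b) rules out common ancestors, hypothesis (a) rules out $j \in \anc{i}$ and $i \in \anc{j}$, and $i \ne j$ handles the last pairing. Once this disjointness is in hand, expanding $\E[x_i(t - s) x_j(t - \tau)]$ as a double sum over time lags $\alpha, \beta$ of terms proportional to $A_{ik}(\alpha) A_{j\ell}(\beta) \E[v_k(t - s - \alpha) v_\ell(t - \tau - \beta)]$ produces only cross-index terms $k \ne \ell$; every such term vanishes because $\Sigma_v$ is diagonal (and $v$ is sequentially uncorrelated), independently of the time lags. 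Continuity of the inner product then lifts this from raw time-domain samples to the full closed subspaces, yielding $\H_t^{(i)} \perp \H_t^{(j)}$.

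The two secondary conclusions follow almost immediately. For $\linE{x_j(t)}{\H_t^{(i)}} = 0$, I note that $x_j(t) \in \H_t^{(j)}$ and that $\H_t^{(j)}$ has just been shown to be orthogonal to $\H_t^{(i)}$, so the projection of $x_j(t)$ onto $\H_t^{(i)}$ is zero by uniqueness of orthogonal projection. For $j \npwgc i$, pairwise Granger causality compares $\linE{x_i(t)}{\H_{t-1}^{(i)} \vee \H_{t-1}^{(j)}}$ against $\linE{x_i(t)}{\H_{t-1}^{(i)}}$; since $\H_{t-1}^{(j)}$ is orthogonal to both $\H_{t-1}^{(i)}$ and $H_t^{(i)}$, the projection onto the direct sum splits, and the $\H_{t-1}^{(j)}$ component of $x_i(t)$ vanishes. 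The equivalence in Theorem \ref{thm:granger_causality_equivalences} then gives $j \npwgc i$.

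I do not foresee a substantive obstacle: the proposition is essentially a bookkeeping consequence of the Ancestor Expansion combined with the diagonal-innovation assumption. The only place to exercise care is remembering to include $i$ itself (and $j$ itself) in the index set of each expansion, so that the disjointness argument also handles the $\A_{ii}(z) v_i$ and $\A_{jj}(z) v_j$ terms rather than only the strict-ancestor contributions.
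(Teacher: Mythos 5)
Your proposal is correct and follows essentially the same route as the paper: expand both $x_i(t-s)$ and $x_j(t-\tau)$ via the Ancestor Expansion and kill every cross term using the diagonal, sequentially uncorrelated structure of $v(t)$, with hypotheses (a), (b) and $i \ne j$ guaranteeing that the two innovation index sets $\{i\}\cup\anc{i}$ and $\{j\}\cup\anc{j}$ are disjoint. Your explicit treatment of the two secondary conclusions (the vanishing projection and $j \npwgc i$ via the splitting of the projection onto orthogonal subspaces) is a correct elaboration of what the paper leaves implicit.
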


\begin{proof}
  We show directly that
  $\forall s, \tau \in \Z_+\ \E[x_i(t - s)x_j(t - \tau)] = 0$.  To this end, fix
  $s, \tau \ge 0$, then by expanding with Equation
  \eqref{eqn:ancestor_expansion} we have

  \begin{align*}
    \E x_i(t - s)x_j(t - \tau)
    &= \E \big(\A_{ii}(z)v_i(t - s)\big)\big(\A_{jj}(z)v_j(t - \tau)\big)\\
    &+ \sum_{\substack{k \in \anc{i} \\ k \ne i}}\E[\big(\A_{ik}(z)v_k(t - s)\big)\big(\A_{jj}(z)v_j(t - \tau)\big)]\\
    &+ \sum_{\substack{\ell \in \anc{j} \\ \ell \ne j}}\E[\big(\A_{ii}(z)v_i(t - s)\big) \big(\A_{j\ell}(z) v_\ell(t - \tau)\big)]\\
    &+ \sum_{\substack{k \in \anc{i} \\ k \ne i}}\sum_{\substack{\ell \in \anc{j} \\ \ell \ne j}}\E[\big(\A_{ik}(z)v_k(t - s)\big)\big(\A_{j\ell}(z)v_\ell(t - \tau)\big)].
  \end{align*}
  
  Keeping in mind that $v(t)$ is an isotropic and uncorrelated
  sequence we see that each of these above four terms are 0: the
  first term since $i \ne j$, the second and third since
  $j \not\in \anc{i}$ and $i \not\in \anc{j}$ and finally the fourth since
  $\anc{i} \cap \anc{j} = \emptyset$.
\end{proof}

\begin{lemma}
  \label{lem:vj_perp}
  Consider distinct nodes $i, j$ in a Granger causality graph $\gcg$.
  If $j \not \in \anc{i}$, then $\H_t^{(v_j)} \perp \H_{t}^{(i)}$, and therefore
  for any causal filter $\Phi(z)$ we have

  \begin{align*}
    \linE{\Phi(z)v_j(t)}{\H_{t - 1}^{(i)}} &= 0\\
    \inner{x_i(t)}{\Phi(z)v_j(t)} & = 0.
  \end{align*}
\end{lemma}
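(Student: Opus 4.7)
The plan is to reduce orthogonality of the entire Hilbert spaces to orthogonality at the level of the basic generators $x_i(t-s)$ and $v_j(t-\tau)$, then invoke the ancestor expansion (Proposition \ref{prop:parent_expanding}) to express $x_i$ purely in terms of innovations $v_k$ with $k \in \anc{i}$, none of which is $v_j$ by hypothesis. Since the innovation sequence $v$ has diagonal covariance and is sequentially uncorrelated by the Basic Setup (Definition \ref{def:basic_setup}), the cross-covariance between any such $v_k$ and $v_j$ vanishes at every lag, forcing the inner product to zero. Closedness and continuity of the inner product then extend the orthogonality from the generating set to the full closed span.

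More concretely, the first step is to fix $s, \tau \ge 0$ and expand
\begin{equation*}
  \inner{x_i(t - s)}{v_j(t - \tau)}
  = \inner{\A_{ii}(z)v_i(t - s) + \sum_{\substack{k \in \anc{i} \\ k \ne i}} \A_{ik}(z)v_k(t - s)}{v_j(t - \tau)}
\end{equation*}
using Proposition \ref{prop:parent_expanding}. Because $j \notin \anc{i}$ and $j \ne i$, every index $k$ appearing in the expansion is distinct from $j$. Using that $v$ has diagonal covariance $\E[v_k(t_1)v_j(t_2)] = 0$ whenever $k \ne j$, every term becomes zero after interchanging the (absolutely convergent) sums with the expectation, so $\inner{x_i(t-s)}{v_j(t-\tau)} = 0$ for all $s, \tau \ge 0$. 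By bilinearity of the inner product this extends to arbitrary finite linear combinations of the generators of $\H_t^{(i)}$ and $\H_t^{(v_j)}$, and by continuity of the inner product (equivalently, completeness of the closed spans) to all of $\H_t^{(v_j)} \perp \H_t^{(i)}$.

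Once the Hilbert-space orthogonality is in hand, the two displayed consequences are routine. For the projection statement, note that $\Phi(z)v_j(t) \in \H_t^{(v_j)}$ since $\Phi$ is causal and the space is closed under causal filtering; orthogonality to $\H_{t-1}^{(i)} \subseteq \H_t^{(i)}$ makes $0$ the best linear estimate, and uniqueness of orthogonal projection yields $\linE{\Phi(z)v_j(t)}{\H_{t-1}^{(i)}} = 0$. For the inner product statement, observe $x_i(t) \in \H_t^{(i)}$ and $\Phi(z)v_j(t) \in \H_t^{(v_j)}$, and apply orthogonality directly.

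The only delicate point, and the one I would be most careful about, is verifying that the (possibly infinite) sum in the ancestor expansion commutes with the inner product, i.e., that $\inner{\sum_k \A_{ik}(z)v_k(t-s)}{v_j(t-\tau)} = \sum_k \inner{\A_{ik}(z)v_k(t-s)}{v_j(t-\tau)}$. This is justified by the absolute summability of the covariance sequence and the $L_2$ convergence of the Wold representation, which together with Cauchy--Schwarz bound the tail, allowing term-by-term handling; this is standard but is the one place where the weak hypotheses of Definition \ref{def:basic_setup} really get used.
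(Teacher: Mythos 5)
Your proposal is correct and follows essentially the same route as the paper's proof: fix the lags, expand $x_i$ via the ancestor expansion of Proposition \ref{prop:parent_expanding}, and observe that every innovation index appearing there differs from $j$, so each cross term vanishes by the diagonal, sequentially uncorrelated structure of $v$. The paper's version is terser (it omits the explicit bilinearity/continuity extension and the interchange-of-limits justification you spell out), but the substance is identical.
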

\begin{proof}
  Fix $\tau, s \ge 0$, then by expanding with Equation \eqref{eqn:ancestor_expansion}

  \begin{align*}
    \E[x_i(t - \tau)v_j(t - s)] &= \E[\big(\A_{ii}(z)v_i(t) + \sum_{k \in \anc{i}}\A_{ik}(z)v_k(t) \big) v_j(t - s)]\\
    &= 0.
  \end{align*}

  This follows since $i \ne j$ and $j \not \in \anc{i}$ and $v(t)$ is
  isotrophic and uncorrelated.
\end{proof}

\begin{proposition}
  \label{prop:ancestor_uncorrelated}
  Consider distinct nodes $i, j$ in a Granger causality graph $\gcg$.
  If

  \begin{enumerate}[label=(\alph*)]
  \item{$j \not\in \anc{i}$}
  \item{$\anc{i}\cap\anc{j} = \emptyset$}
  \end{enumerate}

  then $j \npwgc i$.
\end{proposition}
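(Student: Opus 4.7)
The plan is to verify the pairwise analogue of equivalence (d) in Theorem \ref{thm:granger_causality_equivalences}: namely that $\linE{x_i(t)}{\H_{t-1}^{(i,j)}} = \linE{x_i(t)}{\H_{t-1}^{(i)}}$. Writing $e_i(t) := x_i(t) - \linE{x_i(t)}{\H_{t-1}^{(i)}}$, which by construction lies in $\H_t^{(i)}$ and is orthogonal to $\H_{t-1}^{(i)}$, uniqueness of the orthogonal projection onto $\H_{t-1}^{(i,j)}$ reduces the claim to showing $e_i(t) \perp x_j(s)$ for every $s \le t-1$. I would then split on whether $i \in \anc{j}$.

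If $i \notin \anc{j}$, then together with the hypotheses, all three conditions of Proposition \ref{prop:separated_ancestor_uncorrelated} are satisfied, giving $\H_t^{(i)} \perp \H_t^{(j)}$; since $e_i(t) \in \H_t^{(i)}$ and $x_j(s) \in \H_t^{(j)}$, the required orthogonality is immediate.

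The substantive case is $i \in \anc{j}$. Here the key topological observation is that concatenating any path $k \to \cdots \to i$ with a fixed path $i \to \cdots \to j$ shows $\anc{i} \subseteq \anc{j}$; combined with $\anc{i} \cap \anc{j} = \emptyset$ this forces $\anc{i} = \emptyset$, and in particular $\pa{i} = \emptyset$. Applying Equation (\ref{eqn:parent_expansion}) then collapses the $\VAR$ equation for $x_i(t)$ to $x_i(t) = \B_{ii}(z) x_i(t) + v_i(t)$, so $v_i(s) = (1 - \B_{ii}(z)) x_i(s) \in \H_s^{(i)} \subseteq \H_{t-1}^{(i)}$ for every $s \le t-1$. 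Expanding $x_j(s)$ via Equation (\ref{eqn:ancestor_expansion}) splits it into three pieces: $\A_{jj}(z) v_j(s)$, $\A_{ji}(z) v_i(s)$, and $\sum_{\ell \in \anc{j} \setminus \{j, i\}} \A_{j\ell}(z) v_\ell(s)$. Lemma \ref{lem:vj_perp} disposes of the first piece (since $j \notin \anc{i}$) and of each summand in the third (since $\ell \ne i$ and $\ell \notin \anc{i}$): all such terms live in a subspace orthogonal to $\H_t^{(i)} \ni e_i(t)$. The middle piece $\A_{ji}(z) v_i(s)$ is a causal filter of $\{v_i(s-\sigma)\}_{\sigma \ge 0}$, each factor of which lies in $\H_{t-1}^{(i)}$ by the preceding observation, so it is orthogonal to $e_i(t)$ by definition of the residual.

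The step I expect to need the most care is the inclusion $v_i(s) \in \H_s^{(i)}$ in the case $i \in \anc{j}$: this is precisely where the invertibility of the $\VAR(\infty)$ representation in Definition \ref{def:basic_setup} combines with the topological fact $\pa{i} = \emptyset$ to collapse the multivariate innovation $v_i$ onto the univariate past of $x_i$. Without this reduction one cannot rule out a correlation between $e_i(t)$ and the $\A_{ji}(z) v_i(s)$ term of $x_j(s)$, which is the one piece of $x_j(s)$ that does not automatically vanish against $e_i(t)$ by Lemma \ref{lem:vj_perp}.
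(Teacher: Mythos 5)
Your proof is correct, and its core mechanism is the same as the paper's: reduce $j \npwgc i$ to the orthogonality of the innovation $x_i(t) - \linE{x_i(t)}{\H_{t-1}^{(i)}}$ to $\H_{t-1}^{(j)}$, expand $x_j$ via the ancestor expansion \eqref{eqn:ancestor_expansion}, and kill each term $\A_{jk}(z)v_k$ with Lemma \ref{lem:vj_perp} using $k \notin \anc{i}$. Where you genuinely add something is the explicit case split on whether $i \in \anc{j}$. The paper's proof sums over $k \in \anc{j}\cup\{j\}$ and invokes Lemma \ref{lem:vj_perp} for every term, but that lemma requires the node in question to be \emph{distinct} from $i$; when $i \in \anc{j}$ the term $\A_{ji}(z)v_i$ appears and the lemma does not apply to it (indeed $\H_t^{(v_i)} \perp \H_t^{(i)}$ is false in general). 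Your observation that $i \in \anc{j}$ together with $\anc{i}\cap\anc{j}=\emptyset$ forces $\anc{i}=\emptyset$, hence $\pa{i}=\emptyset$, hence $v_i(s) = (1-\B_{ii}(z))x_i(s) \in \H_s^{(i)}$, is exactly the right way to dispose of that term: it lies in $\H_{t-1}^{(i)}$ and so is annihilated by the residual rather than by Lemma \ref{lem:vj_perp}. So your argument is not just a valid alternative — it closes a small gap that the paper's own proof leaves implicit. (In the complementary case $i\notin\anc{j}$ you could equally well have stopped after citing Proposition \ref{prop:separated_ancestor_uncorrelated}, whose conclusion already includes $j \npwgc i$.)
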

\begin{proof}
  By Theorem \ref{thm:granger_causality_equivalences} it suffices to show that

  \begin{equation*}
    \forall \psi \in \H_{t - 1}^{(j)}\ \inner{x_i(t) - \linE{x_i(t)}{\H_{t - 1}^{(i)}}}{\psi - \linE{\psi}{\H_{t - 1}^{(i)}}} = 0.
  \end{equation*}

  which by the orthogonality principle and by representing
  $\psi \in \H_{t - 1}^{(j)}$ via the action of some strictly causal filter
  $\Phi(z)$ on $x_j(t)$ is equivalent to

  \begin{equation}
    \label{eqn:proof_inner0}
    \inner{x_i(t)}{\Phi(z)x_j(t) - \linE{\Phi(z)x_j(t)}{\H_{t - 1}^{(i)}}} = 0.
  \end{equation}

  If we expand $x_j(t)$ using Equation \eqref{eqn:ancestor_expansion},
  the left hand side of \eqref{eqn:proof_inner0} becomes

  \begin{equation*}
    \inner{x_i(t)}{\sum_{k \in \anc{j} \cup \{j\}} \Big(\Phi(z)\A_{jk}(z)v_k(t) - \linE{\Phi(z)\A_{jk}(z)v_k(t)}{\H_{t - 1}^{(i)}}\Big)}.
  \end{equation*}

  We see that this is $0$ by Lemma \ref{lem:vj_perp} since
  $j \not \in \anc{i}$, and
  \[
    \anc{i} \cap \anc{j} = \emptyset \implies \forall k \in \anc{j}: k \not \in \anc{i}.
  \]
\end{proof}

\begin{remark}
  In order to prove Proposition \ref{prop:ancestor_properties} we
  require some additional notation, as well as another representation
  theorem.  The difficulty addressed by the following Definition
  \ref{def:scc} and Lemma \ref{lem:scc_expansion} is that in the
  representation of $x_j(t)$ in terms of it's parents (i.e. Equation
  \eqref{eqn:parent_expansion})

  \[
    x_i(t) = v_i(t) + \B_{ii}(z)x_i(t) + \sum_{k \in \pa{i}}\B_{ik}(z)x_k(t),
  \]

  the filter $\B_{ii}(z)$ need not be stable.  That is, the inverse
  filter $(1 - \B_{ii}(z))^{-1}$ need not exist.  An example of this
  issue is furnished by

  \[
    \B(z) = \left[
    \begin{array}{cc}
      \rho & -a\\
      a & 0\\
    \end{array}
  \right] z^{-1},
  \]

  for which, depending on the value of $a$, may still be stable even
  if $|\rho| > 1$.  This implies that it is not always possible to
  represent $x_i(t)$ in terms of $v_i(t)$ and $x_k(t), k \in \pa{i}$
  alone, i.e. as

  \[
    x_i(t) = (1 - \B_{ii}(z))^{-1}\big(v_i(t) + \sum_{k \in \pa{i}}\B_{ik}(z)x_k(t)\big).
  \]

  The difficulty presented by the non-existence of such a
  representation may become apparent upon studying the proof of
  Proposition \ref{prop:ancestor_properties}.
\end{remark}

\begin{definition}[Strongly Connected Components]
  \label{def:scc}
  In a graph $\gcg$, the \textit{ordered} (by the natural ordering on
  $\N$) subset $S \subseteq [n]$ is \textit{strongly connected} if
  $\forall i, j \in S$, $i \in \anc{j}$ and $j \in \anc{i}$.  We will
  denote by $S(j)$ (which may be the singleton $(j)$) the largest
  strongly connected component (SCC) containing $j$.  We will denote
  $x_{S(j)}(t)$ to be the vector of processes

  \[
    x_{S(j)}(t) = \big(x_s(t)\ |\ s \in S(j)\big),
  \]

  whose indices are given the same (natural) ordering as $S(j)$.
  Similarly, the sub-filter of $\B(z)$ acting on $x_{S(j)}(t)$ will be
  denoted $\B_{S(j)}(z)$.
\end{definition}

\begin{lemma}[Expansion in SCCs]
  \label{lem:scc_expansion}
  Given some $j \in [n]$, the process $x_{S(j)}(t)$ can be represented
  by

  \begin{equation}
    \label{eqn:scc_parent_expansion}
    x_{S(j)}(t) = v_{S(j)}(t) + \B_{S(j)}(z)x_{S(j)}(t) + \sum_{\substack{s \in S(j) \\ k \in \pa{s} \cap S(j)^\c}} B_{sk}(z)x_k(t)e_s^{S(j)},
  \end{equation}

  where $e_s^{S(j)}$ denotes the length $|S(j)|$ canonical basis
  vector with a $1$ in the component corresponding to $x_s$ in the
  vector $x_{S(j)}$, and the summation is a double sum on $s$ and $k$.

  Moreover, the filter $\B(z)$ is stable with $I - \B_{S(j)}(z)$
  invertible:

  \begin{equation}
    \label{eqn:scc_inversion}
    (I - \B_{S(j)}(z))^{-1} = \sum_{k = 0}^\infty \B_{S(j)}(z)^k,
  \end{equation}

  therefore

  \begin{equation}
    \label{eqn:scc_parent_expansion_inverted}
    x_{S(j)}(t) = (I - \B_{S(j)}(z))^{-1} \big(v_{S(j)}(t) + \sum_{\substack{s \in S(j) \\ k \in \pa{s} \cap S(j)^\c}} B_{sk}(z)x_k(t)e_s^{S(j)}\big).
  \end{equation}
  
\end{lemma}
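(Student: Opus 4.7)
The plan is to establish \eqref{eqn:scc_parent_expansion} by componentwise application of Proposition \ref{prop:parent_expanding}, and then to derive \eqref{eqn:scc_inversion} via a block-triangular decomposition of $\B(z)$ induced by the SCC condensation of $\gcg$. Since \eqref{eqn:scc_parent_expansion_inverted} is obtained by substituting \eqref{eqn:scc_inversion} into \eqref{eqn:scc_parent_expansion}, the invertibility step is the load-bearing part of the argument.

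For the first identity, for each $s \in S(j)$ Proposition \ref{prop:parent_expanding} gives
\[
  x_s(t) = v_s(t) + \B_{ss}(z)x_s(t) + \sum_{k \in \pa{s} \cap S(j)}\B_{sk}(z)x_k(t) + \sum_{k \in \pa{s} \cap S(j)^\c}\B_{sk}(z)x_k(t),
\]
where I have split the parents of $s$ into those lying inside $S(j)$ and those outside. Collecting the scalar equations over $s \in S(j)$ into a vector indexed by $S(j)$ with the natural ordering, the diagonal term $\B_{ss}(z)x_s(t)$ together with the first inner sum assemble into the matrix action $\B_{S(j)}(z)x_{S(j)}(t)$, while the second inner sum becomes the forcing summation on the right-hand side of \eqref{eqn:scc_parent_expansion}.

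For the second identity I will exploit the DAG of SCCs. Let $S_1, \ldots, S_m$ be the SCCs of $\gcg$ and pick any topological ordering of the condensation, then permute the coordinates of $x(t)$ accordingly. Under this permutation, $\B(z)$ is block lower triangular with diagonal blocks $\B_{S_1}(z), \ldots, \B_{S_m}(z)$: any cross-SCC edge $k \rightarrow s$ with $k \in S_b$, $s \in S_a$, $a \ne b$, could not be matched by any return edge from $S_a$ to $S_b$ (else $S_a$ and $S_b$ would collapse into a single SCC), so the topological ordering places $b$ strictly before $a$ and the corresponding filter entry lies in the lower triangle. The invertibility assumption of Definition \ref{def:basic_setup} provides $(I - \B(z))^{-1} = \A(z)$ as a stable causal filter; by the block-triangular inversion formula, each diagonal block $(I - \B_{S_k}(z))^{-1}$ must then exist and appear as a stable causal block of $\A(z)$. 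The Neumann series \eqref{eqn:scc_inversion} then converges to this inverse by standard arguments, and substitution into \eqref{eqn:scc_parent_expansion} yields \eqref{eqn:scc_parent_expansion_inverted}.

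The main obstacle, flagged in the Remark preceding the statement, is that a scalar inversion $(1 - \B_{ss}(z))^{-1}$ need not exist even when the full system is stable: an ``unstable'' self-loop $\B_{ss}(z)$ may be stabilized through cross-coupling with the other members of $S(j)$. This is precisely why the inversion can only be performed at the level of the entire SCC block, and why it is essential to first assemble the scalar equations into the vector form \eqref{eqn:scc_parent_expansion} and then invert the whole $|S(j)| \times |S(j)|$ matrix filter $I - \B_{S(j)}(z)$. The block-triangular viewpoint makes the inheritance of stability from the ambient $(I - \B(z))^{-1}$ to each $(I - \B_{S(j)}(z))^{-1}$ immediate.
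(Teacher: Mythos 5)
Your proof is correct and follows essentially the same route as the paper: both assemble the scalar parent expansions over $S(j)$ into the vector equation \eqref{eqn:scc_parent_expansion}, and both obtain \eqref{eqn:scc_inversion} by topologically sorting the SCC condensation so that $I - \B(z)$ becomes block triangular with $I - \B_{S(j)}(z)$ as a diagonal block (the paper phrases the inheritance of invertibility via $\det(I-\B(z)) = \prod_i \det(I-\B_{S_i}(z)) \ne 0$ on $|z^{-1}| \le 1$, which is the same fact as your block-triangular inversion step). Your closing observation about why the inversion must be performed at the level of the whole SCC block rather than scalar-by-scalar matches the motivating remark preceding the lemma.
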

\begin{proof}
  The representation of Equation \eqref{eqn:scc_parent_expansion}
  follows directly from the $\VAR$ representation of $x(t)$ (i.e. Equation
  \eqref{eqn:ar_representation})

  \[
    x(t) = \B(z)x(t) + v(t),
  \]

  which, when rearranged appropriately, can be written as

  \[
    \left[
      \begin{array}{c}
        x_{S(j)}(t)\\
        x_{S(j)^c}(t)
      \end{array}
    \right] =
    \left[
      \begin{array}{cc}
        \B_{S(j)}(z)& \B_{S(j), S(j)^\c}(z)\\
        \B_{S(j)^\c, S(j)}(z)& \B_{S(j)^\c}(z)
      \end{array}
    \right]
        \left[
      \begin{array}{c}
        x_{S(j)}(t)\\
        x_{S(j)^c}(t)
      \end{array}
    \right] +
    \left[
      \begin{array}{c}
        v_{S(j)}(t)\\
        v_{S(j)^c}(t)
      \end{array}
    \right].
  \]

  Theorem \ref{thm:granger_causality_equivalences} is invoked in order
  to restrict the summation to $k \in \pa{s}$ (since other elements
  are $0$).

  Now, we can partition $\gcg$ into it's maximal SCCs
  $S_1, \ldots, S_N$, (one of which is $S(j)$) and then consider the
  DAG formed on $N$ nodes with edges $I \rightarrow J$ included on the
  condition that
  $\exists j \in S_J, i \in S_I \text{ s.t. } i \in \anc{j}$.  By
  topologically sorting this DAG, we obtain an ordering $\sigma$ of
  $[n]$ such that $\B_\sigma(z)$ is block upper triangluar, with one
  of it's diagonal blocks consisting of the (possibly reordered)
  matrix $\B_{S(j)}(z)$.  So we have

  \begin{align*}
    \forall\ |z^{-1}| \le 1: \det \B(z) = \prod_{i = 1}^N \det \B_{S_i}(z) &\ne 0\\
    \implies \forall\ |z^{-1}| \le 1: \det \B_{S(j)}(z) &\ne 0,
  \end{align*}

  and therefore $\B_{S(j)}(z)$ is stable, invertible, and Equation
  \eqref{eqn:scc_inversion} holds.
\end{proof}

\begin{proposition}
  \label{prop:ancestor_properties}
  If in a Granger causality graph $\gcg$ where $j \pwgc i$ then
  $j \in \anc{i}$ or $\exists k \in \anc{i} \cap\anc{j}$ which is a
  confounder of $(i, j)$.
\end{proposition}

\begin{proof}
  We will prove by way of contradiction.  To this end, suppose that
  $j$ is a node such that:

  \begin{enumerate}[label=(\alph*)]
    \item{$j \not \in \anc{i}$}
    \item{every $k \in \anc{i} \cap \anc{j}$ every
        $k \rightarrow \cdots \rightarrow j$ path contains $i$.}
  \end{enumerate}

  Firstly, notice that every $u \in \big(\pa{j} \setminus \{i\}\big)$
  necessarily inherits these same two properties.  This follows since
  if we also had $u \in \anc{i}$ then $u \in \anc{i} \cap \anc{j}$ so by our
  assumption every $u \rightarrow \cdots \rightarrow j$ path must contain
  $i$, but $u \in \pa{j}$ so $u \rightarrow j$ is a path that doesn't contain
  $i$, and therefore $u \not\in \anc{i}$; moreover, if we consider
  $w \in \anc{i} \cap \anc{u}$ then we also have
  $w \in \anc{i} \cap \anc{j}$ so the assumption implies that every
  $w \rightarrow \cdots \rightarrow j$ path must contain $i$.  These properties therefore
  extend inductively to every $u \in \big(\anc{j} \setminus \{i\}\big)$.

  In order to deploy a recursive argument, define the following
  partition of $\pa{u}$, for some node $u$:

  \begin{align*}
    C_0(u) &= \{k \in \pa{u}\ |\ i \not\in \anc{k}, \anc{i} \cap \anc{k} = \emptyset, k \ne i\}\\
    C_1(u) &= \{k \in \pa{u}\ |\ i \in \anc{k} \text{ or } k = i\}\\
    C_2(u) &= \{k \in \pa{u}\ |\ i \not\in \anc{k}, \anc{i} \cap \anc{k} \ne \emptyset, k \ne i\}.
  \end{align*}

  We notice that for any $u$ having the properties $(a), (b)$ above,
  we must have $C_2(u) = \emptyset$ since if $k \in C_2(u)$ then
  $\exists w \in \anc{i} \cap \anc{k}$ (and
  $w \in \anc{i} \cap \anc{u}$, since $k \in \pa{u}$) such that
  $i \not \in \anc{k}$ and therefore there must be a path
  $\gcgpath{w}{k} \rightarrow u$ which does not contain $i$,
  contradicting property $(b)$.  Moreover, for any
  $u \in \big(\anc{j} \setminus \{i\}\big)$ and $k \in C_0(u)$,
  Proposition \ref{prop:ancestor_uncorrelated} shows that
  $H_t^{(i)} \perp \H_{t - 1}^{(j)} | \H_{t - 1}^{(i)}$.

  In order to establish $j \npwgc i$, choose an arbitrary element of
  $\H_{t - 1}^{(j)}$ and represent it via the action of a strictly causal
  filter $\Phi(z)$, i.e.  $\Phi(z) x_j(t) \in \H_{t - 1}^{(j)}$, by
  Theorem \ref{thm:granger_causality_equivalences} it suffices to show
  that

  \begin{equation}
    \label{eqn:sufficient_inner_prod}
    \inner{x_i(t)}{\Phi(z)x_j(t) - \linE{\Phi(z)x_j(t)}{\H_{t - 1}^{(i)}}} = 0.
  \end{equation}

  Denote $e_j \defeq e_j^{S(j)}$, we can write
  $x_j(t) = e_j^\T x_{S(j)}(t)$, and therefore from Equation
  \eqref{eqn:scc_parent_expansion_inverted} there exist strictly
  causal filters $\Gamma_s(z)$ and $\Lambda_{sk}(z)$ (defined for ease
  of notation) such that

  \[
    x_j(t) = \sum_{s \in S(j)} \Gamma_s(z)v_s(t) + \sum_{\substack{s \in S(j) \\ k \in \pa{s} \cap S(j)^\c}} \Lambda_{sk}(z)x_k(t).
  \]

  When we substitute this expression into the left hand side of
  Equation \eqref{eqn:sufficient_inner_prod}, we may cancel each term
  involving $v_s$ by Lemma \ref{lem:vj_perp}, and each $k \in C_0(s)$
  by our earlier argument, leaving us with

  \[
    \sum_{\substack{s \in S(j) \\ k \in C_1(s) \cap S(j)^\c}}\inner{x_i(t)}{\Phi(z)\Lambda_{sk}(z)x_k(t) - \linE{\Phi(z)\Lambda_{sk}(z)x_k(t)}{\H_{t - 1}^{(i)}}}.
  \]

  Since each $k \in C_1(s)$ with $k \ne i$ inherits properties $(a)$
  and $(b)$ above, we can recursively expand each $x_k$ of the above
  summation until reaching $k = i$ (which is garaunteed to terminate
  due to the definition of $C_1(u)$) which leaves us with some
  strictly causal filter $F(z)$ such that the left hand side of
  Equation \eqref{eqn:sufficient_inner_prod} is equal to

  \[
    \inner{x_i(t)}{\Phi(z)F(z)x_i(t) - \linE{\Phi(z)F(z)x_i(t)}{\H_{t - 1}^{(i)}}},
  \]

  and this is $0$ since $\Phi(z)F(z)x_i(t) \in \H_{t - 1}^{(i)}$.
\end{proof}

\begin{proposition}
  \label{prop:sc_graph_common_anc}
  In a strongly causal graph if $j \in \anc{i}$ then any
  $k \in \anc{i} \cap \anc{j}$ is not a confounder, that is,
  the unique path from $k$ to $i$ contains $j$.
\end{proposition}
\begin{proof}
  Suppose that there is a path from $k$ to $i$ which does not contain
  $j$.  In this case, there are multiple paths from $k$ to $i$ (one of
  which \textit{does} go through $j$, since $j \in \anc{i}$) which
  contradicts the assumption of strong causality.
\end{proof}

\begin{proposition}
  \label{prop:pwgc_anc}
  If $\gcg$ is a strongly causal DAG then $j \in \anc{i} \Rightarrow j \pwgc i$.
\end{proposition}
\begin{proof}
  We will show that for some $\psi \in \H_{t - 1}^{(j)}$ we have

  \begin{equation}
    \label{eqn:cond_ortho_proof}
    \inner{\psi - \linE{\psi}{\H_{t - 1}^{(i)}}}{x_i(t) - \linE{x_i(t)}{\H_{t - 1}^{(i)}}} \ne 0
  \end{equation}

  and therefore that $H_t^{(i)} \not\perp\ \H_{t - 1}^{(j)}\ |\ \H_{t - 1}^{(i)}$, which by Theorem (\ref{thm:granger_causality_equivalences}) is enough to establish that $j \pwgc i$.

  Firstly, we will establish a representation of $x_i(t)$ that involves $x_j(t)$.  Denote by $a_{r + 1} \rightarrow a_r \rightarrow \cdots \rightarrow a_1 \rightarrow a_0$ with $a_{r + 1} \defeq j$ and $a_0 \defeq i$ the \textit{unique} $\gcgpath{j}{i}$ path in $\gcg$, we will expand the representation of Equation \eqref{eqn:parent_expansion} backwards along this path:

  \begin{align*}
    x_i(t) &= v_i(t) + \B_{ii}(z) x_i(t) + \sum_{k \in \pa{i}}\B_{ik}(z) x_k(t)\\
           &= \underbrace{v_{a_0}(t) + \B_{a_0a_0}(z) x_i(t) + \sum_{\substack{k \in \pa{a_0} \\ k \ne a_1}}\B_{a_0 k}(z) x_k(t)}_{\defeq \wtalpha{a_0}{a_1}} + \B_{a_0a_1}(z)x_{a_1}(t)\\
           &= \wtalpha{a_0}{a_1} + \B_{a_0a_1}(z)\big[\wtalpha{a_1}{a_2} + \B_{a_1a_2}(z)x_{a_2}(t) \big]\\
           &\overset{(a)}{=} \sum_{\ell = 0}^r \underbrace{\Big(\prod_{m = 0}^{\ell - 1} \B_{a_m a_{m + 1}}(z) \Big)}_{\defeq F_\ell(z)} \wtalpha{a_\ell}{a_{\ell + 1}} + \Big(\prod_{m = 0}^{r}\B_{a_m a_{m + 1}}(z)\Big)x_{a_{r + 1}}(t)\\
           &= \sum_{\ell = 0}^r F_\ell(z) \wtalpha{a_\ell}{a_{\ell + 1}} + F_{r + 1}(z) x_j(t)
  \end{align*}

  where $(a)$ follows by a routine induction argument and where we define $\prod_{m = 0}^{-1} \bullet \defeq 1$ for notational convenience.

  Using this representation to expand Equation \eqref{eqn:cond_ortho_proof}, we obtain the following cumbersome expression:

  \begin{align*}
    &\inner{\psi - \linE{\psi}{\H_{t - 1}^{(i)}}}{F_{r + 1}(z)x_j(t) - \linE{F_{r + 1}(z)x_j(t)}{\H_{t - 1}^{(i)}}}\\
    &- \inner{\psi - \linE{\psi}{\H_{t - 1}^{(i)}}}{\linE{\sum_{\ell = 0}^r F_\ell(z)\wtalpha{a_\ell}{a_{\ell + 1}}}{\H_{t - 1}^{(i)}}}\\
    &+ \inner{\psi - \linE{\psi}{\H_{t - 1}^{(i)}}}{\sum_{\ell = 0}^r F_\ell(z)\wtalpha{a_\ell}{a_{\ell + 1}}}.
  \end{align*}

  Note that by the orthogonality principle, $\psi - \linE{\psi}{\H_{t - 1}^{(i)}} \perp \H_{t - 1}^{(i)}$, the middle term above is $0$.  Choosing now the particular value $\psi = F_{r + 1}(z)x_j(t) \in \H_{t - 1}^{(j)}$ we arrive at

  \begin{align*}
    &\inner{\psi - \linE{\psi}{\H_{t - 1}^{(i)}}}{x_i(t) - \linE{x_i(t)}{\H_{t - 1}^{(i)}}}\\
    &= \E|F_{r + 1}(z)x_j(t) - \linE{F_{r + 1}(z)x_j(t)}{\H_{t - 1}^{(i)}}|^2\\
    &+ \inner{F_{r + 1}(z)x_j(t) - \linE{F_{r + 1}(z)x_j(t)}{\H_{t - 1}^{(i)}}}{\sum_{\ell = 0}^r F_\ell(z) \wtalpha{a_\ell}{a_{\ell + 1}}}.
  \end{align*}

  Now since $F_{r + 1}(z) \ne 0$ by Theorem
  \ref{thm:granger_causality_equivalences}, and
  $F_{r + 1}(z)x_j(t) \not\in \H_{t - 1}^{(i)}$, we have by the
  Cauchy-Schwarz inequality that this expression is equal to $0$ if
  and only if

  \begin{equation*}
    \sum_{\ell = 0}^r F_\ell(z) \wtalpha{a_\ell}{a_{\ell + 1}} \overset{\text{a.s.}}{=} \linE{F_{r + 1}(z)x_j(t)}{\H_{t - 1}^{(i)}} - F_{r + 1}(z)x_j(t),
  \end{equation*}

  or by rearranging and applying the representation for $x_i(t)$
  obtained earlier, if and only if

  \begin{equation*}
    x_i(t) \overset{\text{a.s.}}{=} \linE{F_{r + 1}(z)x_j(t)}{\H_{t - 1}^{(i)}}.
  \end{equation*}

  But, this is impossible since $x_i(t) \not \in \H_{t - 1}^{(i)}$.
\end{proof}

\begin{example}
  Consider a process $x(t)$ generated by the $\VAR(1)$
  model\footnote{Recall that any $\VAR(p)$ model with $p < \infty$ can
    be written as a $\VAR(1)$ model, so we lose little generality in
    considering this case.}  having $\B(z) = Bz^{-1}$.  If $B$ is
  diagonalizable, and has at least $2$ distinct eigenvalues, then
  $x(t)$ is persistent.

  Pick any $i \in [n], j \in \anc{i} \setminus \{i\}$.  Then the
  stability of $B$ allows us to write

  \begin{equation*}
    \A(z) = \sum_{k = 0}^\infty B^k z^{-k},
  \end{equation*}

  whereby we see that $\exists k > 0$ such that $[B^k]_{ij} \ne 0$
  (since $j \in \anc{i}$).  Then consider

  \begin{equation*}
    \begin{aligned}
      [B^{rk}]_{ij} &= e_i^\T B^{rk} e_j \\
      &\overset{(a)}{=} \big((P^\T e_i)^\T J^{rk} P^{-1}e_j\big)\\
      &= \tr [(P^\T e_i)^\T J^{rk} P^{-1}e_j]\\
      &\overset{(b)}{=} \tr [(J^{rk}) (v u^\T)],
    \end{aligned}
  \end{equation*}

  where $(a)$ utilizes the Jordan Normal Form of $B$, and $(b)$
  denotes $u = P^\T e_i$ and $v = P^{-1}e_j$.  In order for
  $\tau_\infty(\A_{ij}) < \infty$, there must be some $N > 1$ such
  that $\forall r \ge N$, the above term is $0$.  This may be the case
  for instance if $B$ is a nilpotent matrix.  

  Using the supposition that $B$ is diagonalizable (i.e. $J$ is a
  diagonal matrix) with at least $2$ distinct eigenvalues (in this
  case $B$ is \textit{not} nilpotent), we can then rewrite the above
  as

  \begin{equation*}
    f(r) \defeq \tr [(J^{rk}) (v u^\T)] = \sum_{\nu = 1}^n \lambda_\nu ^{rk} v_\nu u_\nu \defeq \sum_{\nu = 1}^n \lambda_\nu^{rk} \beta_\nu
  \end{equation*}

  where $\lambda_\nu$ denotes the eigenvalues of $B$ and
  $\beta_\nu = u_\nu v_\nu$.  Note that $f(0) = 0$ since $i \ne j$ and
  $u$ is a row of $P$ and $v$ is a column of $P^{-1}$.  Moreover,
  $f(1) \ne 0$ by hypothesis.  But, in order for
  $f(r) = 0\ \forall r \ge N$, it would need to be the case that

  \begin{equation*}
    \Dg(\bm{\lambda})^r \bm{\lambda} = Vz
  \end{equation*}

  had a solution in $z$ for every $r \ge N$, where $V$ is an
  $n \times n - 1$ full-rank matrix whose columns span the nullspace of $\beta$,
  and $\bm{\lambda} = (\lambda_1, \ldots, \lambda_n)$. That is,
  iterates of $\Dg(\bm{\lambda})$ applied to $\bm{\lambda}$ would need to remain
  inside $\beta$'s nullspace.  This would imply that

  \begin{equation*}
    VV^\dagger \bm{\lambda}^{r + 1} = \bm{\lambda}^{r + 1},
  \end{equation*}

  i.e. that $\bm{\lambda}^{r + 1}$ is an eigenvector of $VV^\dagger$
  for an infinite number of integers $r$ (the exponentiation is to be
  understood as a point wise operation).  However, since there can only
  be a finite number of (unit length) eigenvectors, this cannot be the
  case unless every eigenvalue $(\lambda_1, \ldots, \lambda_n)$ were
  equal.

  We see from this example that the collection of $\VAR(1)$ systems
  which are not persistent are pathological, in the sense that their
  system matrices have zero measure when viewed as a subset of $\R^{n^2}$.
\end{example}

\begin{lemma}
  \label{lem:time_lag_cancellation}
  Suppose $v(t)$ is a scalar process with unit variance and zero
  autocorrelation and let $\A(z), \B(z)$ be nonzero and strictly
  causal (i.e. $1 \le \tau_0(\A) < \infty$,
  $1 \le \tau_0(\B) < \infty$) linear filters.  Then,

  \begin{equation}
    \inner{F(z)\A(z)v(t)}{\B(z)v(t)} = 0\ \forall \text{ strictly causal filters } F(z)
  \end{equation}

  if and only if $\tau_0(\A) \ge \tau_\infty(\B)$.
\end{lemma}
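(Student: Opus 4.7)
The plan is to reduce the inner product to a bilinear expression in the impulse responses of the three filters, then exploit the freedom in choosing $F$ to extract pointwise conditions on $\A$ and $\B$. The key observation is that for scalar unit-variance white noise, the inner product $\inner{G(z)v(t)}{H(z)v(t)}$ of two filtered versions of $v$ equals the $\ell_2$ inner product of their impulse responses, since $\E[v(t-m)v(t-n)] = \delta_{mn}$.

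Applying this with $G = F\A$ and $H = \B$ and writing $F(z) = \sum_{s \ge 1} f_s z^{-s}$, a direct expansion gives
\[
  \inner{F(z)\A(z)v(t)}{\B(z)v(t)} = \sum_{s \ge 1} f_s\, c_s, \qquad c_s \defeq \sum_{\tau \ge \tau_0(\A)} a_\tau b_{s + \tau}.
\]
Taking $F(z) = z^{-s}$ isolates each $c_s$, so the inner product vanishes for every strictly causal $F$ if and only if $c_s = 0$ for every $s \ge 1$. The two directions of the lemma then reduce to analysing when this happens.

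For the sufficiency direction, $\tau_0(\A) \ge \tau_\infty(\B)$ (which forces $\tau_\infty(\B) < \infty$) implies $s + \tau > \tau_\infty(\B)$ whenever $s \ge 1$ and $\tau \ge \tau_0(\A)$, so every $b_{s+\tau}$ appearing in $c_s$ is zero. Conversely, if $\tau_0(\A) < \tau_\infty(\B) < \infty$, I would pick the critical shift $s^\star = \tau_\infty(\B) - \tau_0(\A) \ge 1$: all terms in $c_{s^\star}$ with $\tau > \tau_0(\A)$ have $s^\star + \tau > \tau_\infty(\B)$ and drop out, leaving $c_{s^\star} = a_{\tau_0(\A)}\, b_{\tau_\infty(\B)} \ne 0$, which exhibits $F(z) = z^{-s^\star}$ as a witness.

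The main obstacle is the regime $\tau_\infty(\B) = \infty$: there the stated condition $\tau_0(\A) \ge \tau_\infty(\B)$ is vacuous, and one must still show some strictly causal $F$ yields a nonzero inner product. This is delicate because all $c_s$ can genuinely vanish even for infinite-length $\B$ --- for instance, $\A(z) = \tfrac12 z^{-1} + z^{-2}$ with $\B(z) = z^{-2}/(1 + \tfrac12 z^{-1})$ gives $\A(z^{-1})\B(z) \equiv 1$, so every $c_s$ is zero. The clean argument above therefore tacitly treats the FIR case, which is what the lemma needs downstream; a fully general version would require further hypotheses (or a minimality assumption on the realisations of $\A, \B$) to rule out such pole--zero cancellations.
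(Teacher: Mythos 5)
Your reduction to the cross--correlation coefficients $c_s = \sum_\tau a_\tau b_{s+\tau}$, together with the observation that vanishing of the inner product for every strictly causal $F$ is equivalent to $c_s = 0$ for all $s \ge 1$, is exactly the computation underlying the paper's own proof, and your handling of both directions when $\tau_\infty(\B) < \infty$ is correct. The substantive contribution of your write-up is the case $\tau_\infty(\B) = \infty$: your counterexample is valid. With $\A(z) = \tfrac12 z^{-1} + z^{-2}$ and $\B(z) = z^{-2}/(1+\tfrac12 z^{-1})$, so that $b_m = (-\tfrac12)^{m-2}$ for $m \ge 2$, one gets $c_s = \tfrac12 b_{s+1} + b_{s+2} = (-\tfrac12)^{s-1}\bigl(\tfrac12 - \tfrac12\bigr) = 0$ for every $s \ge 1$, while $\tau_0(\A) = 1 < \tau_\infty(\B) = \infty$. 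This refutes the ``only if'' direction of the lemma as stated. The paper's proof fails at precisely this point: it asserts without justification that ``the coefficients cannot be orthogonal for every choice of $F$,'' whereas your example shows they can, because $\A(z^{-1})\B(z)$ may collapse to a one-sided (here constant) series even though the shifted supports of $a$ and $b$ overlap. So the correct equivalent condition is not $\tau_0(\A) \ge \tau_\infty(\B)$ but rather that the two-sided series $\A(z^{-1})\B(z)$ has no strictly anti-causal part, and the lemma is true as stated only under an extra hypothesis (e.g.\ $\tau_\infty(\B) < \infty$, or a no-cancellation condition of the kind you mention).

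One correction to your closing remark, which has the situation backwards: the FIR case is \emph{not} the one the paper needs downstream. Corollary \ref{cor:time_lag_cancellation} is invoked in the proof of Proposition \ref{prop:persistence_converse} under the persistence hypothesis, which explicitly requires $\tau_\infty(\A_{ik}) = \infty$; the argument there runs ``persistence implies $\tau_0(\A_{jk}) \ge \tau_\infty(\A_{ik})$ fails, hence some $F$ yields a nonzero term'' --- exactly the direction, and exactly the regime, that your counterexample breaks. Assumption \ref{ass:T_causality} guards against cancellation \emph{across} the sum over confounders $k$, not against the per-confounder cancellation you exhibit, so the gap is load-bearing and would need to be patched either by strengthening the persistence definition or by restating the lemma with the one-sidedness of $\A(z^{-1})\B(z)$ as the operative condition.
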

\begin{proof}
  We have

  \begin{align}
    \inner{\A(z)v(t)}{\B(z)v(t)} &= \sum_{\tau = 1}^\infty \sum_{s = 1}^\infty a(\tau)b(s)\E[v(t - s)v(t - \tau)]\\
    &= \sum_{\tau = \text{max}(\tau_0(\A), \tau_0(\B))}^{\text{min}(\tau_\infty(\A), \tau_\infty(\B))} a(\tau) b(\tau),\\
  \end{align}

  since $\E[v(t - s)v(t - \tau)] = \delta_{s - \tau}$.  This expression is
  $0$ if and only if $\tau_0(\A) \ge 1 + \tau_\infty(\B)$ or if
  $\tau_0(\B) \ge 1 + \tau_\infty(\A)$ or if the coefficients are orthogonal along
  the common support.

  Specializing this fact to $\inner{F(z)\A(z)v(t)}{\B(z)v(t)}$ we
  see that the coefficients cannot be orthogonal for every choice of
  $F$, and that $\text{sup}_F \tau_\infty(F\A) = \infty$, leaving only
  the possibility that

  \begin{align*}
     \forall F\ \tau_0(F\A) \ge 1 + \tau_\infty(\B) &\overset{(a)}{\iff} \tau_0(\A) \ge 1 + \tau_\infty(\B) - \underset{F}{\text{min }} \tau_0(F)\\
    &\overset{(b)}{\iff} \tau_0(\A) \ge \tau_\infty(\B),
  \end{align*}

  where $(a)$ follows since $\tau_0(F\A) = \tau_0(F) + \tau_0(\A)$,
  and $(b)$ since $\text{min}_F\ \tau_0(F) = 1$.
\end{proof}

\begin{corollary}
  \label{cor:time_lag_cancellation}
  For $k \in \anc{i} \cap \anc{j}$ we have

  \begin{align*}
    \linE{F(z)\A_{jk}(z)v_k(t)}{\H_{t - 1}^{(i)}} &= 0\ \forall \text{ strictly causal } F(z)\\
    \iff \inner{F(z)\A_{jk}(z)v_k(t)}{\A_{ik}(z)v_k(t)} &= 0\ \forall \text{ strictly causal } F(z)\\
    \iff \tau_0(\A_{jk}) \ge \tau_\infty(\A_{ik})
  \end{align*}
\end{corollary}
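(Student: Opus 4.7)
The plan is to establish the three conditions as equivalent by leveraging the immediately preceding Lemma \ref{lem:time_lag_cancellation} together with the ancestor expansion from Proposition \ref{prop:parent_expanding}. As a preliminary, I would observe that $k \in \anc{i} \cap \anc{j}$ forces both $\A_{ik}$ and $\A_{jk}$ to be nonzero (by Proposition \ref{prop:parent_expanding}, since a nonzero entry in $\A$ witnesses an ancestor relation), so the hypotheses of Lemma \ref{lem:time_lag_cancellation} are met once $v_k$ is rescaled by $\sigma_k$ to obtain the unit-variance setting that the Lemma assumes.

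The equivalence of the second and third lines is essentially mechanical: apply Lemma \ref{lem:time_lag_cancellation} with $\A := \A_{jk}$ and $\B := \A_{ik}$, noting that the rescaling absorbs the $\sigma_k^2$ factor without affecting whether the inner product vanishes. For the equivalence of the first two lines, I would represent any element of $\H_{t-1}^{(i)}$ as $G(z) x_i(t)$ for some strictly causal filter $G$, then substitute the ancestor expansion $x_i(t) = \sum_{\ell \in \anc{i}} \A_{i\ell}(z) v_\ell(t)$. Because distinct $v_\ell$ components are uncorrelated and $F(z)\A_{jk}(z)v_k(t)$ lives in the span of $v_k$ alone, the inner product $\inner{F\A_{jk}(z)v_k(t)}{G(z) x_i(t)}$ collapses to $\inner{F\A_{jk}v_k(t)}{G\A_{ik}v_k(t)}$. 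The projection vanishing for every strictly causal $F$ is therefore equivalent to this scalar inner product vanishing for every pair of strictly causal $(F,G)$.

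The main obstacle is the final collapse step: bridging the gap between the joint quantifier ``for all strictly causal $F,G$'' produced by the projection and the single quantifier ``for all strictly causal $F$'' against the fixed element $\A_{ik}(z)v_k(t)$ in line 2. I would resolve this by passing through line 3: invoke Lemma \ref{lem:time_lag_cancellation} on line 2 to extract the support-gap condition $\tau_0(\A_{jk}) \ge \tau_\infty(\A_{ik})$, and then verify that under this gap the strictly causal shifts introduced by any $G$ cannot create overlap in the coefficient supports of $F\A_{jk}$ and $G\A_{ik}$ that would yield a nonzero convolutional inner product. This uses the stationarity of $v_k$ to move shifts between the two arguments. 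Since the gap criterion that certifies $(2)$ is the same criterion needed to certify the projection in $(1)$, the three conditions collapse to a single statement, closing the cycle.
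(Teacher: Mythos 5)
Your architecture matches the paper's: the second equivalence is read directly off Lemma \ref{lem:time_lag_cancellation}, and the first line is reduced, via the ancestor expansion and the mutual uncorrelatedness of the $v_\ell$, to an inner product against the $v_k$-component of $\H_{t - 1}^{(i)}$, with the cycle closed through the lag condition. One small correction before the main point: your claim that $k \in \anc{i}\cap\anc{j}$ forces $\A_{ik}, \A_{jk} \ne 0$ does not follow from Proposition \ref{prop:parent_expanding}, which gives only the implication $k \notin \anc{i} \Rightarrow \A_{ik}(z) = 0$; Example \ref{ex:diamond_cancellation} shows the converse fails. This is harmless here (if either filter vanishes, all three lines hold vacuously), but the citation is a converse error.

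The substantive problem is the collapse step you yourself flag as the main obstacle. The claim that under the gap $\tau_0(\A_{jk}) \ge \tau_\infty(\A_{ik})$ a strictly causal $G$ ``cannot create overlap'' between the supports of $F\A_{jk}$ and $G\A_{ik}$ is false: $G$ delays the support of $\A_{ik}$ arbitrarily far into the past, so $G = z^{-m}$ with $m$ large always produces overlap no matter how large the gap is. Concretely, take the fork $i \leftarrow k \rightarrow j$ with $\A_{ik}(z) = z^{-2}$ and $\A_{jk}(z) = z^{-10}$: the gap condition and line 2 both hold, yet $z^{-1}\A_{jk}(z)v_k(t) = v_k(t - 11)$ has nonzero inner product with $x_i(t - 9) \in \H_{t - 1}^{(i)}$, so the projection in line 1 is nonzero. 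The ``for all $F, G$'' condition is therefore strictly stronger than line 2, and your route cannot close the cycle as described. You have in fact put your finger on the weak point of the paper's own argument, which reduces line 1 to $\inner{F(z)\A_{jk}(z)v_k(t)}{\A_{ik}(z)v_k(t - \tau)} = 0$ for all $\tau \ge 1$ and all strictly causal $F$, and then invokes Lemma \ref{lem:time_lag_cancellation} as though the shift $\tau$ were absent; the same example defeats that step. The equivalence that is actually used downstream, in Proposition \ref{prop:persistence_converse}, is only that of the second and third lines, which both you and the paper establish correctly.
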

\begin{proof}
  The final equivalence follows immediately from Lemma \ref{lem:time_lag_cancellation}.  For the first equivalence we have

  \begin{align*}
    \linE{F(z)\A_{jk}(z)v_k(t)}{\H_{t - 1}^{(i)}} &= 0\ \forall \text{ strictly causal } F(z)\\
    \iff \inner{F(z)A_{jk}(z)v_k(t)}{x_i(t - \tau)} &= 0\ \forall \tau \ge 1, \text{ strictly causal } F(z),
  \end{align*}

  which can be expanded by Equation \eqref{eqn:ancestor_expansion} to
  obtain (after cancelling all ancestors of $i$ other than $k$)

  \begin{equation*}
    \inner{F(z)A_{jk}(z)v_k(t)}{\A_{ik}(z)v_k(t - \tau)} = 0\ \forall \tau \ge 1, \text{ strictly causal } F(z),
  \end{equation*}

  which by the Lemma is equivalent to $\tau_0(\A_{jk}) \ge \tau_\infty(\A_{ik})$ as stated.
\end{proof}

\begin{proposition}
  \label{prop:persistence_converse}
  Fix $i, j \in [n]$ and suppose $\exists k \in \anc{i} \cap \anc{j}$
  which confounds $i, j$.  Then, if $T_{ij}(z)$ is not causal we have
  $j \pwgc i$, and if $T_{ij}(z)$ is not anti-causal we have
  $i \pwgc j$.  Moreover, if Assumption \ref{ass:T_causality} is
  satisfied, then $j \pwgc i \iff i \pwgc j$.
\end{proposition}

\begin{proof}
  Recalling Theorem \ref{thm:granger_causality_equivalences}, consider
  some $\psi \in \H_{t - 1}^{(j)}$ and represent it as
  $\psi(t) = F(z)x_j(t)$ for some strictly causal filter $F(z)$.
  Then

  \begin{align*}
    &\inner{\psi(t) - \linE{\psi(t)}{\H_{t - 1}^{(i)}}}{x_i(t) - \linE{x_i(t)}{\H_{t - 1}^{(i)}}}\\
    &\overset{(a)}{=} \inner{F(z)x_j(t)}{x_i(t) - \linE{x_i(t)}{\H_{t - 1}^{(i)}}}\\
    &\overset{(b)}{=} \inner{F(z)\big(\A_{jj}(z)v_j(t) + \sum_{k \in \anc{j}}\A_{jk}(z)v_k(t)\big)}{(1 - H_i(z))\big(\A_{ii}(z)v_i(t) + \sum_{\ell \in \anc{i}}\A_{i\ell}(z)v_\ell(t)\big)}\\
    &\overset{(c)}{=} \sum_{k \in \anc{i}\cap\anc{j}}\inner{F(z)\A_{jk}(z)v_k(t)}{(1 - H_i(z))\A_{ik}(z)v_k(t)},
  \end{align*}

  where $(a)$ applies the orthogonality principle, $(b)$ expands with
  Equation \eqref{eqn:ancestor_expansion} with
  $H_i(z)x_i(t) = \linE{x_i(t)}{H_{t - 1}^{(i)}}$, and $(c)$ follows by
  performing cancellations of $v_k(t) \perp v_\ell(t)$ and noting that
  by the contrapositive of Proposition \ref{prop:sc_graph_common_anc}
  we cannot have $i \in \anc{j}$ or $j \in \anc{i}$.

  Through symmetric calculation, we can obtain the expression relevant
  to the determination of $i \pwgc j$ for $\phi \in \H_{t - 1}^{(i)}$ represented by the strictly causal filter $G(z): \phi(t) = G(z)x_i(t)$
  \begin{align*}
    &\inner{\phi(t) - \linE{\phi(t)}{\H_{t - 1}^{(j)}}}{x_j(t) - \linE{x_j(t)}{\H_{t - 1}^{(j)}}}\\
    &= \sum_{k \in \anc{i} \cap \anc{j}}\inner{G(z)\A_{ik}(z)v_k(t)}{(1 - H_j(z))\A_{jk}(z)v_k(t)},
  \end{align*}

  where $H_j(z)x_j(t) = \linE{x_j(t)}{\H_{t - 1}^{(j)}}.$

  We have therefore

  \begin{align}
      &(j \pwgc i): \exists F(z) \text{ s.t. } \sum_{k \in \anc{i} \cap \anc{j}}\inner{F(z)\A_{jk}(z)v_k(t)}{(1 - H_i(z))\A_{ik}(z)v_k(t)} \ne 0,\\
      &(i \pwgc j): \exists G(z) \text{ s.t. } \sum_{k \in \anc{i} \cap \anc{j}}\inner{G(z)\A_{ik}(z)v_k(t)}{(1 - H_j(z))\A_{jk}(z)v_k(t)} \ne 0.
  \end{align}

  The persistence condition, by Corollary
  \ref{cor:time_lag_cancellation}, ensures that for each
  $k \in \anc{i}\cap\anc{j}$ there is some $F(z)$ and some $G(z)$ such
  that at least one of the above terms constituting the sum over $k$
  is non-zero.  It remains to eliminate the possibility of
  cancellation in the sum.

  The adjoint of a linear filter $C(z)$ is simply $C(z^{-1})$, which
  recall is strictly anti-causal if $C(z)$ is strictly causal.  Using
  this, we can write

  \begin{align*}
    &\sum_{k \in \anc{i} \cap \anc{j}}\inner{F(z)\A_{jk}(z)v_k(t)}{(1 - H_i(z))\A_{ik}(z)v_k(t)}\\
    = &\sum_{k \in \anc{i} \cap \anc{j}}\inner{\A_{ik}(z^{-1})(1 - H_i(z^{-1}))F(z)\A_{jk}(z)v_k(t)}{v_k(t)}.\\
  \end{align*}

  Moreover, it is sufficient to find some strictly causal $F(z)$ of
  the form $F(z)(1 - H_j(z))$ (abusing notation) since $1 - H_j(z)$ is
  causal.  Similarly for $G(z)$, this leads to symmetric expressions
  for $j \pwgc i$ and $i \pwgc j$ respectively:

  \begin{equation}
    \label{eqn:T_F}
    \sum_{k \in \anc{i} \cap \anc{j}}\inner{\A_{ik}(z^{-1})(1 - H_i(z^{-1}))F(z)(1 - H_j(z))\A_{jk}(z)v_k(t)}{v_k(t)},
  \end{equation}
  \begin{equation}
    \label{eqn:T_G}
    \sum_{k \in \anc{i} \cap \anc{j}}\inner{\A_{ik}(z^{-1})(1 - H_i(z^{-1}))G(z^{-1})(1 - H_j(z))\A_{jk}(z)v_k(t)}{v_k(t)}.
  \end{equation}

  Recall the filter from Assumption \ref{ass:T_causality}

  \begin{equation}
    T_{ij}(z) = \sum_{k \in \anc{i} \cap \anc{j}} \sigma_k^2\A_{ik}(z^{-1})(1 - H_i(z^{-1}))(1 - H_j(z))\A_{jk}(z).
  \end{equation}

  Since each $v_k(t)$ is uncorrelated through time,
  $\inner{T_{ij}(z)v_k(t)}{v_k(t)} = \sigma_k^2T_{ij}(0)$, and
  therefore we have $j \pwgc i$ if $T_{ij}(z)$ is \textit{not} causal
  and $i \pwgc j$ if $T_{ij}(z)$ it \textit{not} anti-causal.
  Moreover, we have $i \npwgc j$ \textit{and} $j \pwgc i$ if
  $T_{ij}(z)$ is a constant.  Therefore, under Assumption
  \ref{ass:T_causality} $j \pwgc i \iff i \pwgc j$.

  This follows since if $T_{ij}(z)$ is not causal then
  $\exists k > 0$ such that the $z^k$ coefficient of $T_{ij}(z)$ is
  non-zero, and we can choose strictly causal $F(z) = z^{-k}$ such
  that \eqref{eqn:T_F} is non-zero and therefore $j \pwgc i$.
  
  Similarly, if $T_{ij}(z)$ is not anti-causal, then
  $\exists k > 0$ such that the $z^{-k}$ coefficient of $T_{ij}(z)$ is
  non-zero, and we can choose strictly causal $G(z)$ so that
  $G(z^{-1}) = z^k$, and then \refeq{eqn:T_G} is non-zero and therefore
  $i \pwgc j$.
\end{proof}

\subsection{The Main Theorem}
\label{apx:proof_main_theorem}

\begin{theorem}[Pairwise Recovery]
  \label{thm:scg_recovery}
  If the Granger causality graph $\gcg$ for the process $x(t)$ is a
  strongly causal DAG and Assumption \ref{ass:T_causality} holds, then
  $\gcg$ can be inferred from pairwise causality tests.  The procedure
  can be carried out, assuming we have an oracle for pairwise
  causality, via Algorithm (\ref{alg:pwgr}).
\end{theorem}

\begin{algorithm}[H]
  \SetKwInOut{Input}{input}
  \SetKwInOut{Output}{output}
  \SetKwInOut{Initialize}{initialize}
  \DontPrintSemicolon

  \caption{Pairwise Granger Causality Algorithm (PWGC)}
  \label{alg:pwgr}
  \Input{Pairwise Granger causality relations}
  \Output{Edges $\gcge = \{(i, j) \in [n] \times [n]\ |\ i \gc j \}$ of
    the graph $\gcg$.}
  \Initialize{$S_0 = [n]$  \texttt{\# unprocessed nodes}\\
    $E_0 = \emptyset$  \texttt{\# edges of }$\gcg$\\

    $k = 1$ \texttt{\# a counter used only for notation}}
  \BlankLine
  $W \leftarrow \{(i, j)\ |\ i \pwgc j, j \npwgc i\}$  \texttt{\# candidate edges}\\
  $P_0 \leftarrow \{i \in S_0\ |\ \forall s \in S_0\ (s, i) \not\in W\}$  \texttt{\# parentless nodes}\\
  \While{$S_{k - 1} \ne \emptyset$}{
    $S_k \leftarrow S_{k - 1} \setminus P_{k - 1}$ \texttt{\# remove nodes with depth }$k - 1$\\
    $P_k \leftarrow \{i \in S_k\ |\ \forall s \in S_k\ (s, i) \not\in W\}$   \texttt{\# candidate children}\\

    $D_{k0} \leftarrow \emptyset$\\
    \For{$r = 1, \ldots, k$} 
    {
      $Q \leftarrow E_{k - 1} \cup \big(\bigcup_{\ell = 0}^{r - 1} D_{k\ell}\big)$ \texttt{\# currently known edges}\\
      $D_{kr} \leftarrow \{(i, j) \in P_{k - r} \times P_k\ |\ (i, j) \in W,\ \text{no } i \rightarrow j \text{ path in } Q\}$
    }
    $E_k \leftarrow E_{k - 1} \cup \big(\bigcup_{r = 1}^k D_{kr}\big)$ \texttt{\# update } $E_k$ \texttt{ with new edges}\\
    $k \leftarrow k + 1$
  }
  \Return{$E_{k - 1}$}
\end{algorithm}

Our proof proceeds in 5 steps stated formally as lemmas.  Firstly, we
characterize the sets $W$ and $P_k$.  Then we establish a correctness
result for the inner loop on $r$, a correctness result for the outer
loop on $k$, and finally that the algorithm terminates in a finite
number of steps.

\begin{lemma}[$W$ Represents Ancestor Relations]
  \label{lem:W_subset_E}
  In Algorithm \ref{alg:pwgr} we have
  $(i, j) \in W$ if and only if $i \in \anc{j}$.  In particular,
  $W \subseteq \gcge$.
\end{lemma}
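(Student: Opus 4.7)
The plan is to prove the biconditional $(i, j) \in W \iff i \in \anc{j}$ directly; the ``in particular'' assertion then follows immediately, since every parent is an ancestor (so $\gcge$ and $W$ can be compared via the biconditional).

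For the forward direction, suppose $i \in \anc{j}$. Since $\gcg$ is a strongly causal DAG, Proposition \ref{prop:pwgc_anc} gives $i \pwgc j$ at once, so the content of the direction is to rule out $j \pwgc i$. Argue by contradiction: if $j \pwgc i$, then Proposition \ref{prop:ancestor_properties} forces one of two alternatives. The first, $j \in \anc{i}$, is impossible because $i \in \anc{j}$ already, and together these would form a cycle, violating the DAG hypothesis. The second, the existence of a confounder $k \in \anc{i} \cap \anc{j}$ of the pair $(i,j)$, is ruled out by Proposition \ref{prop:sc_graph_common_anc} applied with the roles of $i$ and $j$ exchanged: since $i \in \anc{j}$ and $\gcg$ is strongly causal, the unique $\gcgpath{k}{j}$ must pass through $i$, so $k$ fails the definition of a confounder of $(i,j)$. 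Both alternatives are thus eliminated, giving $j \npwgc i$.

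For the reverse direction, suppose $(i, j) \in W$, so $i \pwgc j$ and $j \npwgc i$. Apply Proposition \ref{prop:ancestor_properties} to $i \pwgc j$: either $i \in \anc{j}$ (and we are done), or there exists a confounder $k \in \anc{i} \cap \anc{j}$ of $(i,j)$. In the latter case, Assumption \ref{ass:T_causality} together with Proposition \ref{prop:persistence_converse} gives the symmetric equivalence $i \pwgc j \iff j \pwgc i$, which contradicts $j \npwgc i$. Hence $i \in \anc{j}$ in all cases.

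The main obstacle is bookkeeping: three structural propositions about confounders pull in different directions, and one must pair each with the correct hypothesis. In particular, Proposition \ref{prop:sc_graph_common_anc} eliminates confounders when one node is already known to be an ancestor of the other (used in the forward direction), whereas Proposition \ref{prop:persistence_converse} eliminates \emph{one-way} pairwise causalities arising from confounders (used in the reverse direction); invoking them with the roles of $i, j$ correctly swapped is the only delicate step. Everything else is a direct application of the preceding results, and no new computation is required.
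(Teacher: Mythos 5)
Your argument is correct and is essentially the paper's own proof: Proposition \ref{prop:pwgc_anc} gives $i \pwgc j$; Proposition \ref{prop:sc_graph_common_anc} together with acyclicity (the paper cites Corollary \ref{cor:parent_corollary}, which encodes the same fact) rules out both alternatives of Proposition \ref{prop:ancestor_properties} to get $j \npwgc i$; and the converse uses Proposition \ref{prop:persistence_converse} under Assumption \ref{ass:T_causality} to exclude the confounder case, exactly as in the paper. The only point worth flagging is the ``in particular'' clause: your parenthetical in fact establishes $\gcge \subseteq W$, which is the inclusion actually consistent with the biconditional ($W$ collects \emph{all} ancestor relations, so $W \subseteq \gcge$ as literally written cannot hold --- see the red dotted edges of Figure \ref{fig:example_fig3}); the paper's proof silently ignores that clause as well.
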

\begin{proof}
  Let $j \in [n]$ and suppose that $i \in \anc{j}$.  Then $i \pwgc j$
  by Proposition \ref{prop:pwgc_anc}.  Proposition
  \ref{prop:sc_graph_common_anc} ensures that $(i, j)$ are not
  confounded and Corollary \ref{cor:parent_corollary} that
  $j \not\in \anc{i}$ so $j \npwgc i$ by Proposition and therefore
  \ref{prop:ancestor_properties} $(i, j) \in W$.

  Conversely, suppose $(i, j) \in W$.  Then since $j \npwgc i$,
  Proposition \ref{prop:persistence_converse} ensures that $(j, i)$
  are not confounded and so by Proposition \ref{prop:ancestor_properties}
  we must have $i \in \anc{j}$.
\end{proof}

\begin{definition}[Depth]
  For our present purposes we will define the \textit{depth} $d(j)$ of
  a node $j$ in $\gcg$ to be the length of the \textit{longest} path
  from a node in $P_0$ to $j$, where $d(j) = 0$ if $j \in P_0$.  It is
  apparent that such a path will always exist.  For example, in Figure
  \ref{fig:example_fig3} we have $d(3) = 1$ and $d(4) = 2$.
\end{definition}

\begin{lemma}[Depth Characterization of $P_k$ and $S_k$]
  \label{lem:depth_lemma}
  $i \in P_k \iff d(i) = k$ and $j \in S_k \iff d(j) \ge k$.
\end{lemma}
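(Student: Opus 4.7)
My plan is to proceed by induction on $k$, using Lemma \ref{lem:W_subset_E} to translate the $W$-based conditions in the algorithm into statements about ancestors in $\gcg$. That lemma gives $(s, i) \in W \iff s \in \anc{i}$, and I will also rely on the basic acyclicity fact that any strict ancestor $a$ of a node $i$ satisfies $d(a) < d(i)$: extending a longest path terminating at $a$ by any $\gcgpath{a}{i}$ yields a path to $i$ of length at least $d(a) + 1$, forcing $d(i) \ge d(a) + 1$.

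The base case $k = 0$ is immediate. We have $S_0 = [n]$, so $j \in S_0 \iff d(j) \ge 0$ trivially. The defining condition of $P_0$, that no $s \in S_0$ satisfies $(s, i) \in W$, rewrites via Lemma \ref{lem:W_subset_E} as $\anc{i} \setminus \{i\} = \emptyset$, which is exactly the condition $d(i) = 0$.

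For the inductive step, assume both characterizations hold at level $k - 1$. The statement for $S_k$ is essentially free: $S_k = S_{k - 1} \setminus P_{k - 1}$ combined with the induction hypothesis gives $j \in S_k$ iff $d(j) \ge k - 1$ and $d(j) \ne k - 1$, i.e., $d(j) \ge k$. For $P_k$, suppose $i \in P_k$. Then $i \in S_k$ supplies $d(i) \ge k$, and via Lemma \ref{lem:W_subset_E} together with the just-established characterization of $S_k$, the quantifier in the definition of $P_k$ becomes ``no strict ancestor of $i$ has depth $\ge k$''. Taking the penultimate vertex $a$ of a longest $P_0 \rightarrow \cdots \rightarrow i$ path, which is a strict ancestor of $i$ with $d(a) \ge d(i) - 1$, forces $d(i) - 1 < k$, so $d(i) = k$. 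Conversely, if $d(i) = k$, then $i \in S_k$, and the acyclicity fact ensures every strict ancestor of $i$ has depth $< k$ and therefore lies outside $S_k$, so the $P_k$ condition holds vacuously and $i \in P_k$.

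The only delicate point is keeping the ``no ancestor in $S_k$'' quantifier aligned with the depth characterization of $S_k$ supplied by the induction hypothesis; once that alignment is in place, the acyclicity of $\gcg$ does all the remaining work in both directions of the $P_k$ equivalence. There is no quantitative estimation and no appeal to the analytic material of the paper here, so I expect this lemma to be the shortest and most purely combinatorial step in the correctness argument for Algorithm \ref{alg:pwgr}.
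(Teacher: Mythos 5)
Your proof is correct and follows essentially the same route as the paper's: induction on $k$, translating the $W$-membership conditions into ancestor relations via Lemma~\ref{lem:W_subset_E}, and exploiting the fact that in a DAG strict ancestors have strictly smaller depth. The only differences are organizational — you establish the $S_k$ characterization first and then reuse it, and you obtain the upper bound $d(i) \le k$ from the penultimate vertex of a longest path rather than from the paper's ``some parent of $i$ must remain in $S_k$'' argument — which if anything streamlines the paper's version.
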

\begin{proof}
  We proceed by induction, noting that $P_0$ is non-empty since $\gcg$
  is acyclic and therefore $\gcg$ contains nodes without parents.  The
  base case $i \in P_0 \iff d(i) = 0$ is by definition, and
  $j \in S_0 \iff d(j) \ge 0$ is trivial since $S_0 = [n]$.  So
  suppose that the lemma is true up to $k - 1$.

  ($i \in P_k \implies d(i) = k$): Let $i \in P_k$.  Suppose that
  $d(i) \ge k + 1$, then $\exists j \in \pa{i}$ such that
  $j \not\in \cup_{r \ge 1}P_{k - r}$ (otherwise $d(i) \le k$), this
  implies that $j \in S_k$ with $(j, i) \in W$ (by Lemma
  \ref{lem:W_subset_E}) which is not possible due to the construction of
  $P_k$ and therefore $d(i) \le k$.  Moreover,
  $P_k \subseteq S_k \subseteq S_{k - 1}$ implies that
  $d(i) \ge k - 1$ by the induction hypothesis, but if $d(i) = k - 1$
  then $i \in P_{k - 1}$ again by induction which is impossible since
  $i \in P_k$ and therefore $d(i) = k$.

  ($s \in S_k \implies d(s) \ge k$): Let
  $s \in S_k \subseteq S_{k - 1}$.  We have by induction that
  $d(s) \ge k - 1$, but again by induction (this time on $P_{k - 1}$)
  we have $d(s) \ne k - 1$ since $S_k = S_{k - 1} \setminus P_{k - 1}$
  and therefore $d(s) \ge k$.

  ($d(i) = k \implies i \in P_k$): Suppose $i \in [n]$ is such that
  $d(i) = k$.  Then $i \in S_{k - 1}$ by the hypothesis, but also
  $i \not\in P_{k - 1}$ so then
  $i \in S_k = S_{k - 1} \setminus P_{k - 1}$.  Now, recalling the
  definition of $P_k$

  \begin{equation*}
    P_k = \{i \in S_k\ |\ \forall s \in S_k\ (s, i) \not\in W \},
  \end{equation*}

  if $s \in S_k$ is such that $(s, i) \in W$ then $s \pwgc i$ and
  $i \npwgc s$ so that by Proposition \ref{prop:persistence_converse}
  there cannot be a confounder of $(s, i)$ (otherwise $i \pwgc s$) so
  then by Proposition \ref{prop:ancestor_properties} we have
  $s \in \anc{i}$.  We have shown that $s \in S_k \implies d(s) \ge k$
  and so we must have $d(i) > k$, a contradiction, therefore there is
  no such $s \in S_k$ so $i \in P_k$.

  ($d(j) \ge k \implies j \in S_k$): Let $j \in [n]$ such that
  $d(j) \ge k$, then by induction we have $j \in S_{k - 1}$.  This
  implies by the construction of $S_k$ that $j \not\in S_k$ only if
  $j \in P_{k - 1}$, but we have shown that this only occurs when
  $d(j) = k - 1$, but $d(j) > k - 1$ so $j \in S_k$.
\end{proof}

\begin{lemma}[Inner Loop]
  \label{lem:inner_loop_lemma}
  Fix an integer $k \ge 1$ and suppose that $(i, j) \in E_{k - 1}$ if
  and only if $(i, j) \in \gcge$ and $d(j) \le k - 1$.  Then, we have
  $(i, j) \in D_{kr}$ if and only if $(i, j) \in \gcge $, $d(j) = k$,
  and $d(i) = k - r$.
\end{lemma}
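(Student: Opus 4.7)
The plan is to prove the lemma by induction on the inner-loop index $r$, with the outer-loop induction hypothesis about $E_{k-1}$ taken as given. Throughout, the key tools will be Lemma \ref{lem:depth_lemma} (which identifies $P_m$ with the depth-$m$ nodes), Lemma \ref{lem:W_subset_E} (which identifies $W$ with ancestor relations), and the strong-causality hypothesis (at most one directed path between any two nodes).

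For the forward direction, suppose $(i,j) \in D_{kr}$. Directly from the construction of $D_{kr}$ we have $i \in P_{k-r}$ and $j \in P_k$, so by Lemma \ref{lem:depth_lemma} $d(i) = k-r$ and $d(j) = k$. Also $(i,j) \in W$, so by Lemma \ref{lem:W_subset_E} $i \in \anc{j}$. The nontrivial task is to upgrade $i \in \anc{j}$ to $(i,j) \in \gcge$. I argue by contradiction: if $i$ is not a parent of $j$, the unique $\gcgpath{i}{j}$ path $i = a_0 \to a_1 \to \cdots \to a_L = j$ has length $L \ge 2$. Since depth strictly increases along edges, each intermediate node satisfies $d(a_t) \le d(a_{L-1}) \le k-1$, so every edge $(a_{t-1}, a_t)$ for $t = 1, \ldots, L-1$ lies in $E_{k-1}$ by the outer induction hypothesis. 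For the final edge $(a_{L-1}, j)$, the chain from $i$ to $a_{L-1}$ of length $L-1 \ge 1$ forces $d(a_{L-1}) \ge d(i) + (L-1) \ge k - r + 1$, so writing $d(a_{L-1}) = k - r'$ yields $r' < r$; the inner-loop induction hypothesis then places $(a_{L-1}, j) \in D_{kr'} \subseteq Q$. This assembles a full $i \to j$ path in $Q$, contradicting $(i,j) \in D_{kr}$.

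For the reverse direction, suppose $(i,j) \in \gcge$ with $d(i) = k-r$, $d(j) = k$. Lemma \ref{lem:depth_lemma} gives $i \in P_{k-r}$ and $j \in P_k$, and Lemma \ref{lem:W_subset_E} gives $(i,j) \in W$. It remains to show no $i \to j$ path lies in $Q = E_{k-1} \cup \bigcup_{\ell=0}^{r-1} D_{k\ell}$. Strong causality of $\gcg$ implies the only $\gcgpath{i}{j}$ path in $\gcg$ is the single edge $(i,j)$ itself, so it suffices to show $(i,j) \notin Q$. But $(i,j) \in E_{k-1}$ would require $d(j) \le k-1$ by the outer induction hypothesis (contradiction since $d(j) = k$), and $(i,j) \in D_{k\ell}$ for some $\ell < r$ would require $d(i) = k - \ell \ne k - r$ by the inner induction hypothesis (again a contradiction). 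Hence $(i,j) \in D_{kr}$.

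The main obstacle is the forward direction, specifically showing that the unique path from $i$ to $j$ (in the case $(i,j) \notin \gcge$) is witnessed piece-by-piece inside $Q$. This requires two bookkeeping facts that I expect to check carefully: the monotonicity of depth along directed edges (so intermediate nodes have depth at most $k-1$), and the strict inequality $d(a_{L-1}) > d(i)$ (so that the final edge lands in an earlier $D_{k\ell}$ with $\ell < r$ rather than in $D_{kr}$ itself). Once those two observations are in hand, the two directions fall out cleanly from the outer hypothesis on $E_{k-1}$ and the sub-induction on $r$.
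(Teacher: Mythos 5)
Your proof is correct and follows essentially the same route as the paper's: induction on $r$, using the depth characterization of $P_m$ and the ancestor characterization of $W$, with the forward direction resolved by contradiction — assembling the unique $i \to j$ path inside $Q$ from edges already placed in $E_{k-1}$ (intermediate edges, whose endpoints have depth at most $k-1$) and in an earlier $D_{kr'}$ (the final edge into $j$). Your write-up is in fact slightly more explicit than the paper's at the two bookkeeping points you flag (strict monotonicity of depth along edges in a DAG, and $1 \le r' < r$ for the last edge), both of which check out.
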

\begin{proof}
  We prove by induction on $r$, keeping in mind the results of Lemmas
  \ref{lem:W_subset_E} and \ref{lem:depth_lemma}.  For the base case,
  let $r = 1$ and suppose that $(i, j) \in \gcge$ with $d(j) = k$ and
  $d(i) = k - 1$.  Then, by Corollary \ref{cor:gc_implies_pwgc}
  $(i, j) \in W$ and by our assumptions on $E_{k - 1}$ there is no
  $\gcgpath{i}{j}$ path in $E_{k - 1}$ and therefore
  $(i, j) \in D_{k1}$.  Conversely, suppose that $(i, j) \in D_{k1}$.
  Then, $d(i) = k - 1$ and $d(j) = k$ which, since
  $(i, j) \in W \implies i \in \anc{j}$ implies that $i \in \pa{j}$
  and $(i, j) \in \gcge$.

  Now, fix $r > 1$ and suppose that the result holds up to $r - 1$.
  Let $(i, j) \in \gcge$ with $d(j) = k$ and $d(i) = k - r$.  Then,
  $(i, j) \in W$ and by induction and strong causality there cannot
  already be an $\gcgpath{i}{j}$ path in
  $E_{k - 1} \cup \big(\bigcup_{\ell = 0}^{r - 1} D_{kr}\big)$,
  therefore $(i, j) \in D_{kr}$.  Conversely, suppose
  $(i, j) \in D_{kr}$.  Then we have $d(i) = k - r$, $d(j) = k$, and
  $i \in \anc{j}$.  Suppose by way of contradiction that
  $i \not\in \pa{j}$, then there must be some $u \in \pa{j}$ such that
  $i \in \anc{u}$.  But, this implies that $d(i) < d(u)$ and by
  induction that $(u, j) \in \bigcup_{\ell = 1}^{r - 1}D_{k\ell}$.
  Moreover, since $d(u) < k$ (otherwise $d(j) > k$) each edge in
  the $\gcgpath{i}{u}$ path must already be in $E_{k - 1}$, and so
  there must be an $\gcgpath{i}{j}$ path in
  $E_{k - 1}\cup\big(\bigcup_{\ell = 0}^{r - 1}D_{kr}\big)$, which is
  a contradiction since we assumed $(i, j) \in D_{kr}$.  Therefore
  $i \in \pa{j}$ and $(i, j) \in \gcge$.
\end{proof}

\begin{lemma}[Outer Loop]
  \label{lem:outer_loop_lemma}
  We have $(i, j) \in E_k$ if and only if $(i, j) \in \gcge$ and
  $d(j) \le k$.  That is, at iteration $k, E_k$ and $\gcge$ agree on
  the set of edges whose terminating node is at most $k$ steps away
  from $P_0$.
\end{lemma}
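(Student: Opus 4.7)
The plan is to prove this by induction on $k$, with the Inner Loop Lemma (Lemma \ref{lem:inner_loop_lemma}) doing essentially all of the real work. The key structural observation is that the hypothesis of that lemma, namely that $E_{k-1}$ already agrees with $\gcge$ on edges terminating at depth at most $k-1$, is precisely the outer lemma applied at index $k-1$; so the two lemmas together form a clean induction.

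For the base case $k=0$, we have $E_0 = \emptyset$ by initialization, and we must check that $\gcge$ contains no edge $(i,j)$ with $d(j) \le 0$. If $d(j)=0$ then $j \in P_0$ by Lemma \ref{lem:depth_lemma}, which by construction of $P_0$ means $(s,j) \notin W$ for every $s \in [n]$; by Lemma \ref{lem:W_subset_E} this gives $j$ no ancestors and in particular no parents. For the inductive step, assume the claim at $k-1$, which supplies the hypothesis of Lemma \ref{lem:inner_loop_lemma}. That lemma then characterizes $D_{kr}$ as exactly those $(i,j) \in \gcge$ with $d(j)=k$ and $d(i)=k-r$, so
\begin{equation*}
  \bigcup_{r=1}^k D_{kr} \;=\; \{(i,j) \in \gcge : d(j)=k,\; d(i) \in \{0,\ldots,k-1\}\}.
\end{equation*}
It remains only to observe that the side condition $d(i) \le k-1$ is automatic: if $(i,j) \in \gcge$ then appending the edge $(i,j)$ to any $P_0$-to-$i$ path produces a $P_0$-to-$j$ path of length $d(i)+1$, so $d(i) \le d(j)-1 = k-1$. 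Thus $\bigcup_r D_{kr}$ captures every $\gcge$ edge terminating at depth exactly $k$. Unioning with $E_{k-1}$ via the induction hypothesis yields $E_k = \gcge \cap \{(i,j) : d(j) \le k\}$, as required.

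The main obstacle here is really nothing substantive, since the combinatorial content has been pushed into the Inner Loop Lemma and into Lemmas \ref{lem:W_subset_E} and \ref{lem:depth_lemma}. The only delicate points are the elementary depth inequality $d(i) \le d(j)-1$ for parent-child pairs, and the base-case derivation that a node with no incoming $W$-edge really has no parents — which, although superficially obvious, requires Lemma \ref{lem:W_subset_E} and hence the nontrivial Propositions \ref{prop:ancestor_properties} and \ref{prop:persistence_converse} behind it.
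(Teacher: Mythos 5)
Your proof is correct and follows essentially the same route as the paper: induction on $k$, with the Inner Loop Lemma supplying the characterization of $\bigcup_r D_{kr}$ and the observation that a parent's depth is strictly less than its child's closing the argument. The only (welcome) difference is presentational — you spell out the base case that the paper dismisses as trivial, and you phrase the inductive step as a single set identity rather than two separate inclusions.
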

\begin{proof}
  We will proceed by induction.  The base case $E_0 = \emptyset$ is
  trivial, so fix some $k \ge 1$, and suppose that the lemma holds for
  all nodes of depth less than $k$.

  Suppose that
  $(i, j) \in E_k = E_{k - 1}\cup \big(\bigcup_{r = 1}^k D_{rk}
  \big)$.  Then clearly there is some $1 \le r \le k$ such that
  $(i, j) \in D_{kr}$ so that by Lemma \ref{lem:inner_loop_lemma} we
  have $(i, j) \in \gcge$ and $d(j) = k$.

  Conversely, suppose that $(i, j) \in \gcge$ and $d(j) \le k$.  If
  $d(j) < k$ then by induction $(i, j) \in E_{k - 1} \subseteq E_k$ so
  suppose further than $d(j) = k$.  Since $i \in \pa{j}$ we must have
  $d(i) < k$ (else $d(j) > k$) and again by Lemma
  \ref{lem:inner_loop_lemma} $(i, j) \in \bigcup_{r = 1}^k D_{kr}$
  which implies that $(i, j) \in E_k$.
\end{proof}

\begin{lemma}[Finite Termination]
  Algorithm \ref{alg:pwgr} terminates and returns the set
  $E_{k^\star - 1} = \gcge$ for some $k^\star \le n$.
\end{lemma}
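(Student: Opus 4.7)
The plan is to show first that the outer \textbf{while} loop cannot run for more than $n$ iterations, and then to conclude correctness from Lemma \ref{lem:outer_loop_lemma}. The key observation is that whenever $S_{k-1}$ is non-empty, the set $P_{k-1}$ is also non-empty, so that $|S_k| = |S_{k-1}| - |P_{k-1}| < |S_{k-1}|$. Since $|S_0| = n$, this strict decrease forces termination within at most $n$ iterations; we take $k^\star$ to be the first index at which $S_{k^\star} = \emptyset$.

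To establish non-emptiness of $P_{k-1}$ given $S_{k-1} \neq \emptyset$, I would invoke Lemma \ref{lem:W_subset_E}, which identifies $W$ with the ancestor relation of $\gcg$. Consider the subgraph of $\gcg$ induced on $S_{k-1}$. Since $\gcg$ is a DAG, this induced subgraph is also a DAG, and therefore contains at least one source, i.e., a node $i \in S_{k-1}$ having no ancestor (within $S_{k-1}$). By Lemma \ref{lem:W_subset_E} this means $\forall s \in S_{k-1},\ (s, i) \notin W$, which is exactly the defining condition for $i \in P_{k-1}$.

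Given finite termination at $k^\star \leq n$, the condition $S_{k^\star} = \emptyset$ combined with Lemma \ref{lem:depth_lemma} implies that every node $j \in [n]$ satisfies $d(j) < k^\star$, i.e.\ $d(j) \leq k^\star - 1$. Applying Lemma \ref{lem:outer_loop_lemma} at iteration $k^\star - 1$ then yields $(i, j) \in E_{k^\star - 1}$ if and only if $(i, j) \in \gcge$ and $d(j) \leq k^\star - 1$, but the depth side condition is now vacuous, so $E_{k^\star - 1} = \gcge$, which is precisely what the algorithm returns.

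The only subtle step is the source-existence argument for $P_{k-1}$: one must be careful that the restriction of $W$ to $S_{k-1}$ really coincides with the ancestor relation of the induced subgraph, which is guaranteed by Lemma \ref{lem:W_subset_E} together with the fact that any path witnessing $(s,i) \in W$ with $s,i \in S_{k-1}$ need not lie inside $S_{k-1}$ — but the definition of $P_{k-1}$ only quantifies over $s \in S_{k-1}$ and the relation $(s,i) \in W$, so the argument goes through without requiring closure under paths.
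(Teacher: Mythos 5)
Your overall strategy is sound and your correctness half ($S_{k^\star} = \emptyset$ forces $d(j) \le k^\star - 1$ for all $j$ via Lemma \ref{lem:depth_lemma}, whence Lemma \ref{lem:outer_loop_lemma} gives $E_{k^\star - 1} = \gcge$) is essentially the paper's argument. Your termination half is genuinely different: the paper gets termination for free from the depth characterization --- since every node satisfies $d(j) \le n - 1$ and $S_k = \{j : d(j) \ge k\}$, the set $S_n$ is empty --- whereas you argue by strict decrease of $|S_k|$, which requires showing $P_{k-1} \ne \emptyset$ whenever $S_{k-1} \ne \emptyset$. Your route is more self-contained in that it does not lean on the depth bound for termination, but it forces you to prove the non-emptiness claim, and that is where the one real weakness lies.

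The weak step is the source-existence argument. A source of the subgraph of $\gcg$ induced on $S_{k-1}$ is a node with no \emph{parent} in $S_{k-1}$; but by Lemma \ref{lem:W_subset_E}, membership $i \in P_{k-1}$ requires that no $s \in S_{k-1}$ be an \emph{ancestor} of $i$ in the full graph $\gcg$, where the witnessing path may leave $S_{k-1}$. Your closing paragraph correctly identifies this mismatch but the proposed resolution is backwards: the fact that $W$ encodes the full-graph ancestor relation is precisely why an induced-subgraph source could a priori fail to lie in $P_{k-1}$. Two clean repairs are available. Either take $i$ to be a minimal element of $S_{k-1}$ under the ancestor relation of $\gcg$ --- this relation is a strict partial order on $[n]$ because $\gcg$ is a DAG, so a finite non-empty subset has a minimal element, and minimality is exactly the condition $\forall s \in S_{k-1}\ (s,i) \notin W$. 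Or observe, using Lemma \ref{lem:depth_lemma}, that depth strictly increases along any path in a DAG, so every intermediate node $u$ on a path from $s \in S_{k-1}$ to $i \in S_{k-1}$ satisfies $d(u) > d(s) \ge k - 1$ and hence $u \in S_{k-1}$; thus the restriction of the ancestor relation to $S_{k-1}$ does coincide with the ancestor relation of the induced subgraph, and your source argument then goes through. With either patch the proof is complete.
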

\begin{proof}
  If $n = 1$, the algorithm is clearly correct, returning on the first
  iteration with $E_1 = \emptyset$.  When $n > 1$ Lemma
  \ref{lem:outer_loop_lemma} ensures that $E_k$ coincides with
  $\{(i, j) \in \gcge\ |\ d(j) \le k\}$ and since $d(j) \le n - 1$ for
  any $j \in [n]$ there is some $k^\star \le n$ such that
  $E_{k^\star - 1} = \gcge$.  We must have $S_{k^\star} = \emptyset$
  since $j \in S_{k^\star} \iff d(j) \ge k^\star$ (if $d(j) > k - 1$ then
  $E_{k^\star - 1} \ne \gcge$) and therefore the algorithm terminates.
\end{proof}

\section{Finite Sample Implementation}
\label{sec:structure_learning}
In this section we provide a review of our methods for implementing
Algorithm 1 given a \textit{finite} sample of $T$ data points.  We
apply the simplest reasonable methods in order to maintain a focus on
our main contributions (i.e. Algorithm \ref{alg:pwgr}), more
sophisticated schemes can only serve to improve the results.  Textbook
reviews of the following concepts are provided e.g. by
\cite{all_of_statistics}, \cite{murphy_mlp}, and elsewhere.

In subsection \ref{sec:pairwise_hypothesis_testing} we define pairwise
Granger causality hypothesis tests, in subsection
\ref{sec:model_order_selection} a model order selection criteria, in
subsection \ref{sec:efficient_model_estimation} an efficient
estimation algorithm, in subsection \ref{sec:error_rate_control} the
method for choosing an hypothesis testing threshold, and finally in
subsection \ref{sec:finite_pwgc} the unified finite sample algorithm.

\subsection{Pairwise Hypothesis Testing}
\label{sec:pairwise_hypothesis_testing}
In performing pairwise checks for Granger causality $x_j \pwgc x_i$ we
follow the simple scheme of estimating the following two linear models:

\begin{align}
  H_0:&\ \widehat{x}_i^{(p)}(t) = \sum_{\tau = 1}^{p} b_{ii}(\tau)x_i(t - \tau),\\
  H_1:&\ \widehat{x}_{i|j}^{(p)}(t) = \sum_{\tau = 1}^{p} b_{ii}(\tau)x_i(t - \tau) + \sum_{\tau = 1}^pb_{ij}(\tau)x_j(t - \tau).
\end{align}

We formulate the statistic 

\begin{equation}
  \label{eqn:gc_statistics}
  F_{ij}(p) = \frac{T}{p}\Big(\frac{\xi_i(p)}{\xi_{ij}(p)} - 1\Big),
\end{equation}

where $\xi_i(p)$ is the sample mean square of the
residuals\footnote{This quantity is often denoted $\widehat{\sigma}$,
  but we maintain notation from Definition
  \ref{def:granger_causality}.}  $x_i(t) - \widehat{x}^{(p)}_i(t)$,

\begin{equation*}
  \xi_i(p) = \frac{1}{T - p}\sum_{t = p + 1}^T (x_i(t) - \widehat{x}_i^{(p)}(t))^2,
\end{equation*}

and similarly for $\xi_{ij}(p)$.  We test $F_{ij}(p)$ against a
$\chi^2(p)$ distribution.

If the estimation procedure is consistent, we will have the following
convergence (in $\P$ or a.s.):

\begin{equation}
  F_{ij}(p) \rightarrow
  \left\{
    \begin{array}{ll}
      0;\ x_j \npwgc x_i\\
      \infty;\ x_j \pwgc x_i
    \end{array}
  \right. \text{ as } T \rightarrow \infty.  
\end{equation}

In our finite sample implementation (see Algorithm
\ref{alg:finite_pwgc}) we add edges to $\widehat{\gcg}$ in order of
the decreasing magnitude of $F_{ij}$ instead of proceeding backwards
through $P_{k - r}$ in Algorithm \ref{alg:pwgr}.  This makes greater
use of the information provided by the test statistic $F_{ij}$,
moreover, if $x_i \gc x_j$ and $x_j \gc x_k$, it is expected that
$F_{kj} > F_{ki}$, thereby providing the same effect as proceeding
backwards through $P_{k - r}$.
\subsection{Model Order Selection}
\label{sec:model_order_selection}
There are a variety of methods to choose the filter order $p$ (see
e.g. \cite{lutkepohl2005new}), but we will focus in particular on the
Bayesian Information Criteria (BIC).  The BIC is substantially more
conservative than the popular alternative Akaiake Information Criteria
(the BIC is also asymptotically consistent), and since we are
searching for \textit{sparse graphs}, we therefore prefer the BIC,
where we seek to \textit{minimize} over $p$:

\begin{equation}
  \label{eqn:bic}
  \begin{aligned}
    BIC_{\text{univariate}}(p) &= \ln\ \xi_i(p) + p\frac{\ln T}{T},\\
    BIC_{\text{bivariate}}(p) &= \ln \det \widehat{\Sigma}_{ij}(p) + 4p\frac{\ln T}{T},\\
  \end{aligned}
\end{equation}

where $\widehat{\Sigma}_{ij}(p)$ is the $2 \times 2$ residual
covariance matrix for the $\VAR(p)$ model of $(x_i(t), x_j(t))$.  The
bivariate errors $\xi_{ij}(p)$ and $\xi_{ji}(p)$ are the diagonal
entries of $\widehat{\Sigma}_{ij}(p)$.

We carry this out by a simple direct search on each model order
between $0$ and some prescribed $p_\text{max}$, resulting in a
collection $p_{ij}$ of model order estimates.  In practice, it is
sufficient to pick $p_\text{max}$ ad-hoc or via some simple heuristic
e.g. plotting the sequence $BIC(p)$ over $p$, though it is not
technically possible to guarantee that the optimal $p$ is less than
the chosen $p_\text{max}$ (since there can in general be arbitrarily
long lags from one variable to another).

\subsection{Efficient Model Estimation}
\label{sec:efficient_model_estimation}
In practice, the vast majority of computational effort involved in
implementing our estimation algorithm is spent calculating the error
estimates $\xi_i(p_i)$ and $\xi_{ij}(p_{ij})$.  This requires fitting a
total of $n^2p_{\text{max}}$ autoregressive models, where the most
naive algorithm (e.g. solving a least squares problem for each model)
for this task will consume $O(n^2p_{\text{max}}^4T)$ time, it is
possible to carry out this task in a much more modest
$O(n^2p_{\text{max}}^2 ) + O(n^2p_{\text{max}}T)$ time via the
autocorrelation method
\cite{hayes_statistical_digital_signal_processing} which substitutes
the following autocovariance estimates in the Yule-Walker
equations:\footnote{The particular indexing and normalization given in
  Equation \eqref{eqn:covariance_estimate} is critical to ensure
  $\widehat{R}$ is positive semidefinite.  The estimate can be viewed
  as calculating the covariance sequence of a signal multiplied by a
  rectangular window.}

\begin{equation}
  \label{eqn:covariance_estimate}
  \widehat{R}_x(\tau) = \frac{1}{T}\sum_{t = \tau + 1}^T x(t) x(t - \tau)^\T;\ \tau = 0, \ldots, p_{\text{max}},
\end{equation}

It is imperative that the first index in the summation is $\tau + 1$, as
opposed perhaps to $p_\text{max}$ and that the normalization is
$1 / T$, as opposed perhaps to $1 / (T - p_\text{max})$, in order to
guarantee that $\widehat{R}_x(\tau)$ forms a valid (i.e. positive
definite) covariance sequence.  This results in some bias, however the
dramatic computational speedup is worth it for our purposes.

These covariance estimates constitute the $O(n^2p_{\text{max}}T)$
operation.  Given these particular estimates, the variances $\xi_i(p)$
for $p = 1, \ldots, p_{\text{max}}$ can be evaluated in
$O(p_{\text{max}}^2)$ time each by applying the Levinson-Durbin
recursion to $\widehat{R}_{ii}(\tau)$, which effectively estimates a
sequence of $AR$ models, producing $\xi_i(p)$ as a side-effect (see
\cite{hayes_statistical_digital_signal_processing} and
\cite{levinson_durbin_recursion}).

Similarly, the variance estimates $\widehat{\Sigma}_{ij}(p)$ (which
include $\xi_{ij}$ and $\xi_{ji}$) can be obtained by estimating
$\frac{(n + 1)n}{2}$ bivariate AR models, again in
$O(p_{\text{max}}^2)$ time via Whittle's generalized Levinson-Durbin
recursion\footnote{We have made use of standalone tailor made
  implementations of these algorithms, available at
  \textsf{github.com/RJTK/Levinson-Durbin-Recursion}.}
\cite{whittle_generalized_levinson_durbin}.

\subsection{Edge Probabilities and Error Rate Controls}
\label{sec:error_rate_control}
Denote $F_{ij}$ the Granger causality statistic of Equation
\eqref{eqn:gc_statistics} with model orders chosen by the methods of
Section \ref{sec:model_order_selection}.  We assume that this
statistic is asymptotically $\chi^2(p_{ij})$ distributed (e.g. the
disturbances are Gaussian), and denote by $G$ the cumulative
distribution function thereof.  We will define the matrix

\begin{equation}
  \label{eqn:edge_inclusion_probability}
  P_{ij} = G(F_{ij}),
\end{equation}

to be the matrix of pairwise edge inclusion P-values.  This is
motivated by the hypothesis test where the hypothesis $H_0$ will be
rejected (and thence we will conclude that $x_j \pwgc x_i$) if
$P_{ij} > 1 - \delta$.

The value $\delta$ can be chosen by a variety of methods, in our case
we apply the Benjamini Hochberg criteria \cite{benjamini_hochberg}
\cite{all_of_statistics} to control the false discovery rate of
pairwise edges to a level $\alpha$ (where we generally take
$\alpha = 0.05$).

\subsection{Finite Sample Recovery Algorithm}
\label{sec:finite_pwgc}

After the graph topology $\widehat{\gcg}$ has been estimated via
Algorithm \ref{alg:finite_pwgc}, we refit the entire model with the
specified sparsity pattern directly via ordinary least squares.

We note that producing graph estimates which are not strongly causal
can potentially be achieved by performing sequential estimates
$\widehat{x}_1(t), \widehat{x}_2(t), \ldots$ estimating a strongly causal
graph with the residuals of the previous model as input, and then
refitting on the combined sparsity pattern.  We intend to consider
this heuristic in future work.

\begin{algorithm}
    \SetKwInOut{Input}{input}
    \SetKwInOut{Output}{output}
    \SetKwInOut{Initialize}{initialize}
    \DontPrintSemicolon

    \BlankLine
    \caption{Finite Sample Pairwise Graph Recovery (PWGC)}
    \label{alg:finite_pwgc}

    \Input{Estimates of pairwise Granger causality statistics $F_{ij}$
      (eqn. \ref{eqn:gc_statistics}).  Matrix of edge probabilities $P_{ij}$ (eqn. \ref{eqn:edge_inclusion_probability}).  Hypothesis testing threshold $\delta$ chosen via the Benjamini-Hochberg criterion (Section \ref{sec:error_rate_control})}
    \Output{A strongly causal graph $\widehat{\gcg}$}
    \Initialize{$S = [n]$  \texttt{\# unprocessed nodes}\\
      $E = \emptyset$  \texttt{\# edges of }$\widehat{\gcg}$\\
      $k = 1$ \texttt{\# a counter used only for notation}}

    \BlankLine

    $W_\delta \leftarrow \{(i, j)\ |\ P_{ji} > 1 - \delta, F_{ji} > F_{ij}\}$  \texttt{\# candidate edges}\\
    $\mathcal{I}_0 \leftarrow \big(\sum_{j\in S: (j, i) \in W_\delta} P_{ij}, \mathsf{\ for\ }i \in S \big)$ \texttt{\# total node incident probability}\\
    $P_0 \leftarrow \{i \in S\ |\ \mathcal{I}_0(i) < \ceil{\text{min}(\mathcal{I}_0)}\}$ \texttt{\# Nodes with fewest incident edges}\\
    \If{$P_0 = \emptyset$}{
      $P_0 \leftarrow \{i \in S\ |\ \mathcal{I}_0(i) \le \ceil{\text{min}(\mathcal{I}_0)}\}$ \texttt{\# Ensure non-empty}
    }
    \BlankLine

    \While{$S \ne \emptyset$}{
      $S \leftarrow S \setminus P_{k - 1}$ \texttt{\# remove processed nodes}\\
      $\mathcal{I}_k \leftarrow \big(\sum_{j\in S: (j, i) \in W_\delta} P_{ij}, \mathsf{\ for\ }i \in S \big)$\\
      $P_k \leftarrow \{i \in S\ |\ \mathcal{I}_k(i) < \ceil{\text{min}(\mathcal{I}_k)}\}$\\
      \If{$P_k = \emptyset$}{
        $P_k \leftarrow \{i \in S\ |\ \mathcal{I}_k(i) \le \ceil{\text{min}(\mathcal{I}_k)}\}$
      }
      \;
      \texttt{\# add strongest edges, maintaining strong causality}\\
      $U_k \leftarrow \bigcup_{r = 1}^k P_{k - r}$ \texttt{\# Include all forward edges}\\
      \For{$(i, j) \in \mathsf{sort}\Big(\{(i, j) \in U_k \times P_k\ |\ (i, j) \in W_\delta\} \mathsf{\ by\ descending\ } F_{ji}\Big)$} {
        \If{$\mathsf{is\_strongly\_causal}(E \cup \{(i, j)\})$} {
          \texttt{\# }$\mathsf{is\_strongly\_causal}$ \texttt{can be implemented by keeping}\\
          \texttt{\# track of ancestor / descendant relationships}\\
          $E \leftarrow E \cup \{(i, j)\}$
        }
      }
      $k \leftarrow k + 1$\\
    }
    \Return{$([n], E)$}
\end{algorithm}

\section{Simulation}
\label{apx:simulation}
We have implemented our empirical experiments in Python \cite{scipy},
in particular we leverage the LASSO implementation from
\texttt{sklearn} \cite{sklearn} and the random graph generators from
\texttt{networkx} \cite{networkx}.  We run experiments using two
separate graph topologies having $n = 50$ nodes. These are generated
respectively by drawing a random tree and a random Erdos Renyi graph
then creating a directed graph by directing edges from lower numbered
nodes to higher numbered nodes.

We populate each of the edges (including self loops) with random
linear filters constructed by placing $5$ transfer function poles
(i.e. $p = 5$) uniformly at random in a disc of radius $3 / 4$ (which
guarantees stability for acyclic graphs).  The resulting system is
driven by i.i.d. Gaussian random noise, each component having random
variance $\sigma_i^2 = 1/2 + r_i$ where $r_i \sim \text{exp}(1/2)$.  To ensure
we are generating data from a stationary system, we first discard
samples during a long burnin period.

For both PWGC and adaLASSO We set the maximum lag length
$p_{\text{max}} = 10$.

Results are collected in Figures
\ref{fig:simulation_results_comparison1},
\ref{fig:simulation_results_comparison2},
\ref{fig:simulation_results_scaling_and_small_T},
\ref{fig:simulation_results_dense}.

\begin{figure}
  \centering
  \caption{PWGC Compared Against AdaLASSO \cite{adaptive_lasso_zou2006} (SCG)}
  \label{fig:simulation_results_comparison1}
  \includegraphics[width=0.95\textwidth]{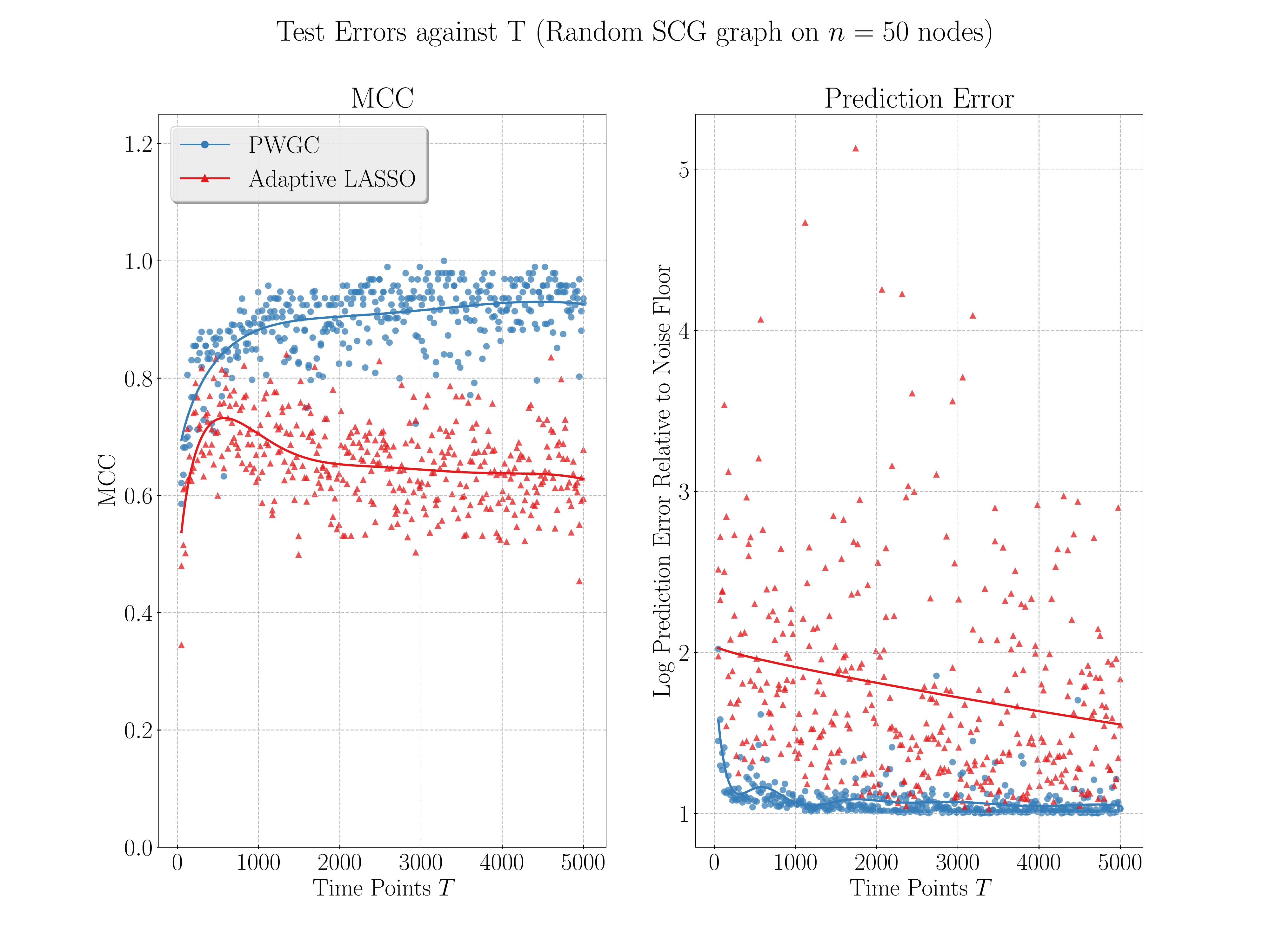}

  {\footnotesize Comparison of PWGC and LASSO for $\VAR(p)$ model
    estimation.  We make comparisons against both the MCC and the
    relative log mean-squared prediction error
    $\frac{\ln\tr \widehat{\Sigma}_v}{\ln\tr \Sigma_v}$.  Results
    in Figure \ref{fig:simulation_results_comparison1} are for systems
    guaranteed to satisfy the assumptions required for Theorem
    \ref{thm:scg_recovery}.}
\end{figure}

\begin{figure}
  \caption{PWGC vs adaLASSO (DAG, $q = \frac{2}{n}$)}
  \label{fig:simulation_results_comparison2}
  \includegraphics[width=0.95\textwidth]{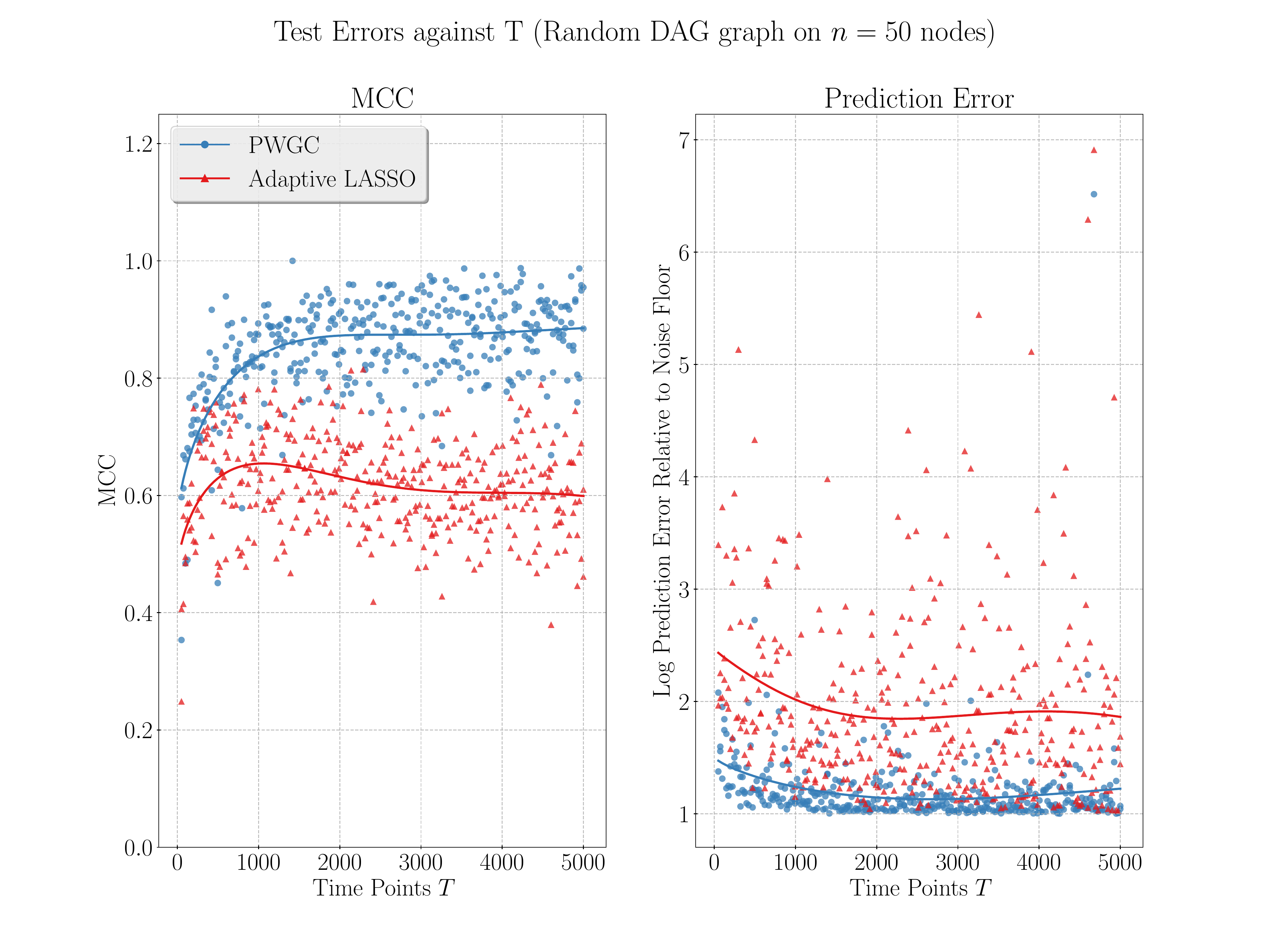}

  {\footnotesize Figure \ref{fig:simulation_results_comparison2}
    provides results for systems which do not guarantee the
    assumptions of Theorem \ref{thm:scg_recovery}, though the graph
    has a similar level of sparsity.}
\end{figure}

\begin{figure}
  \centering
  \caption{PWGC Scaling and Small Sample Performance}
  \label{fig:simulation_results_scaling_and_small_T}
  \begin{subfigure}[b]{0.45\textwidth}
    \caption{Fixed $T$, increasing $n$ (SCG)}
    \label{fig:simulation_results_scaling}
    \includegraphics[width=\linewidth]{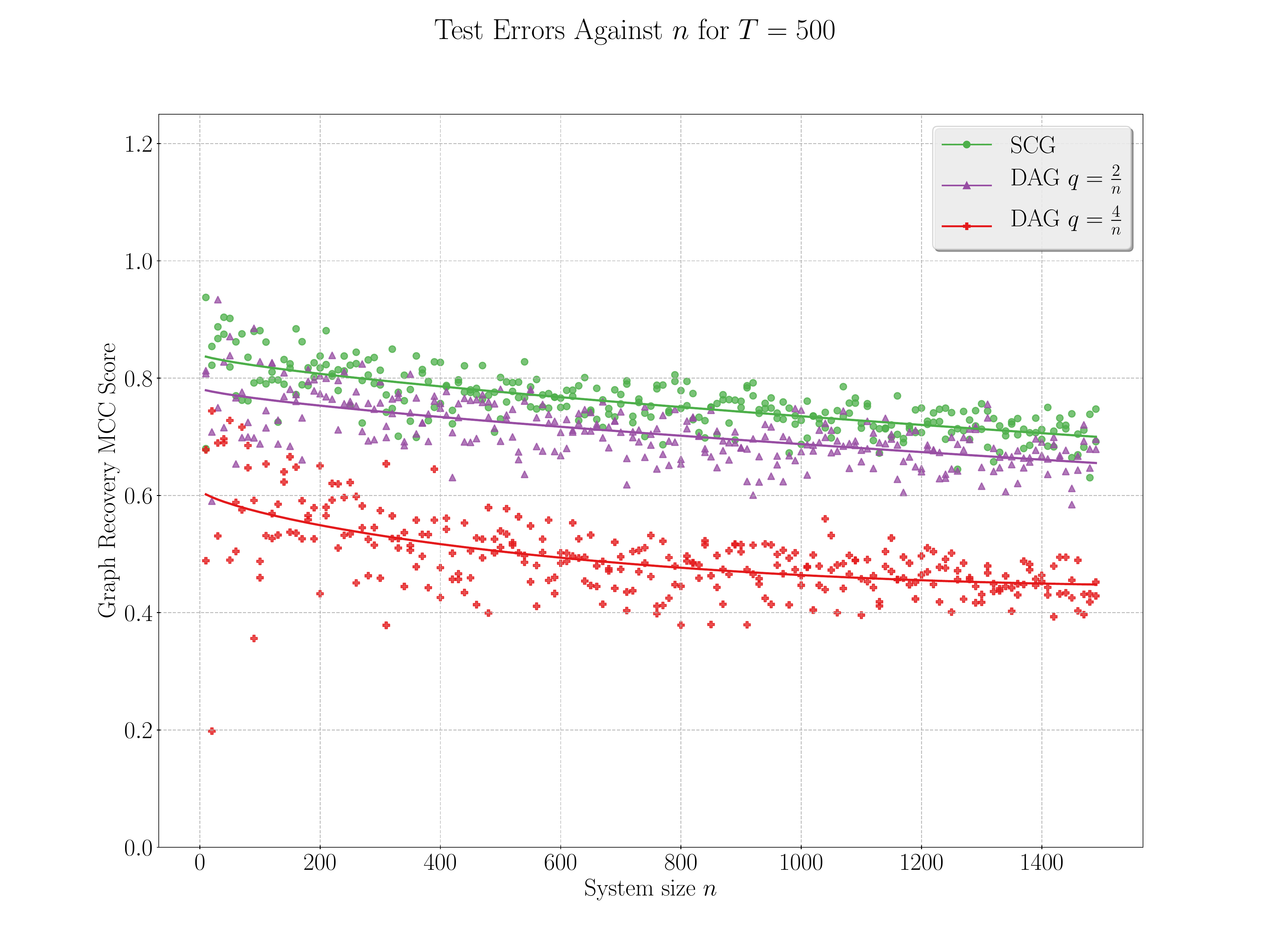}
  \end{subfigure}
  \begin{subfigure}[b]{0.45\textwidth}
    \caption{MCC Comparison for $T \le 100$}
    \label{fig:small_T_comparison}
    \includegraphics[width=\linewidth]{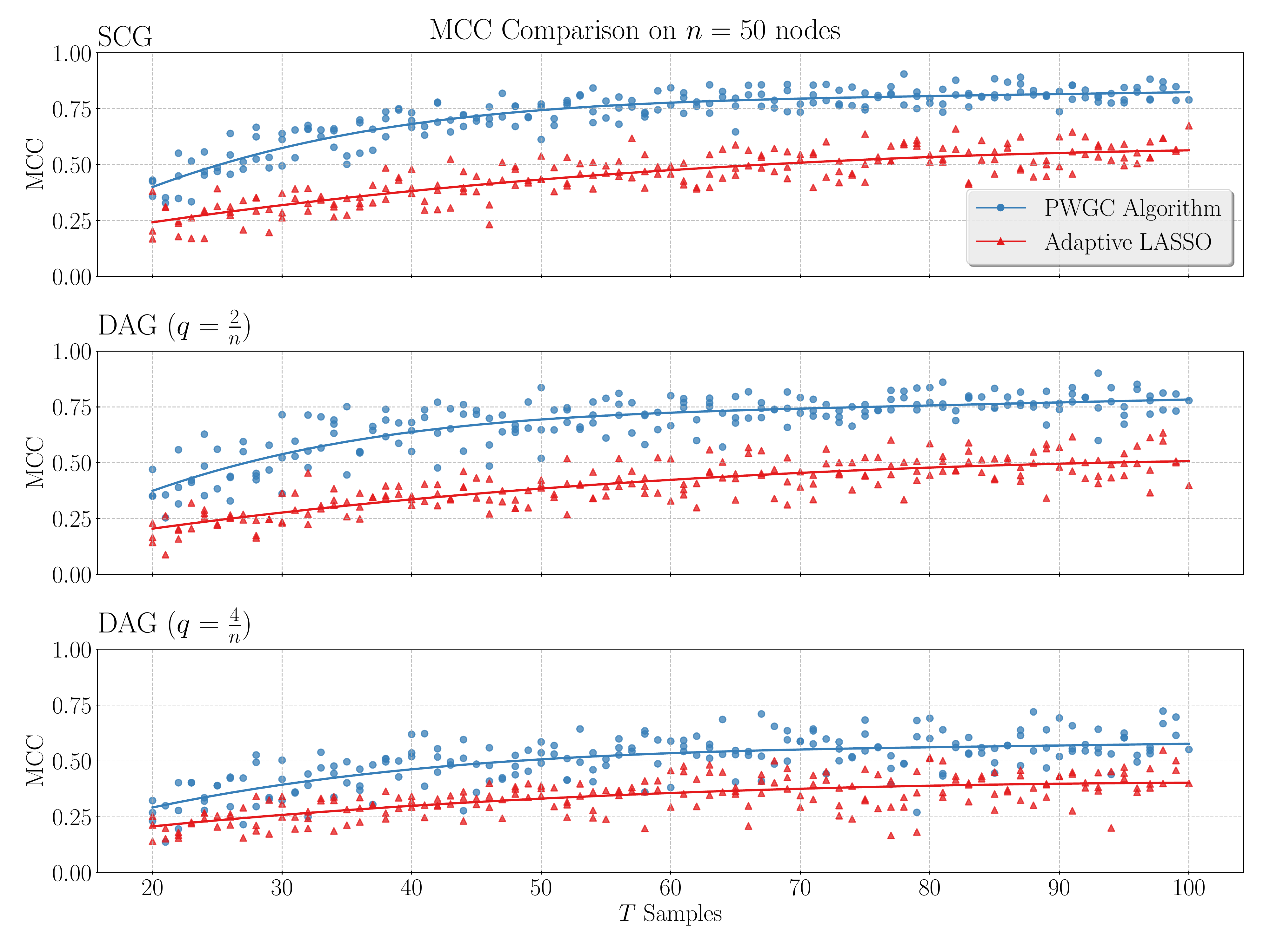}
  \end{subfigure}

  {\footnotesize Figure \ref{fig:simulation_results_scaling} measures
    support recovery performance as the number of nodes $n$ increases,
    and the edge proportion as well as the number of samples $T$ is
    held fixed.  Remarkably, the degradation as $n$ increases is
    limited, it is primarily the graph topology (SCG or non-SCG) as
    well as the level of sparsity (measured by $q$) which are the
    determining factors for support recovery performance.

    Figure \ref{fig:small_T_comparison} provides a support recovery
    comparison for very small values of, $T$ typical for many
    applications.}
\end{figure}

\begin{figure}
  \caption{Fixed $T, n$, increasing edges $q$ (DAG)}
  \label{fig:simulation_results_dense}
  \includegraphics[width=\linewidth]{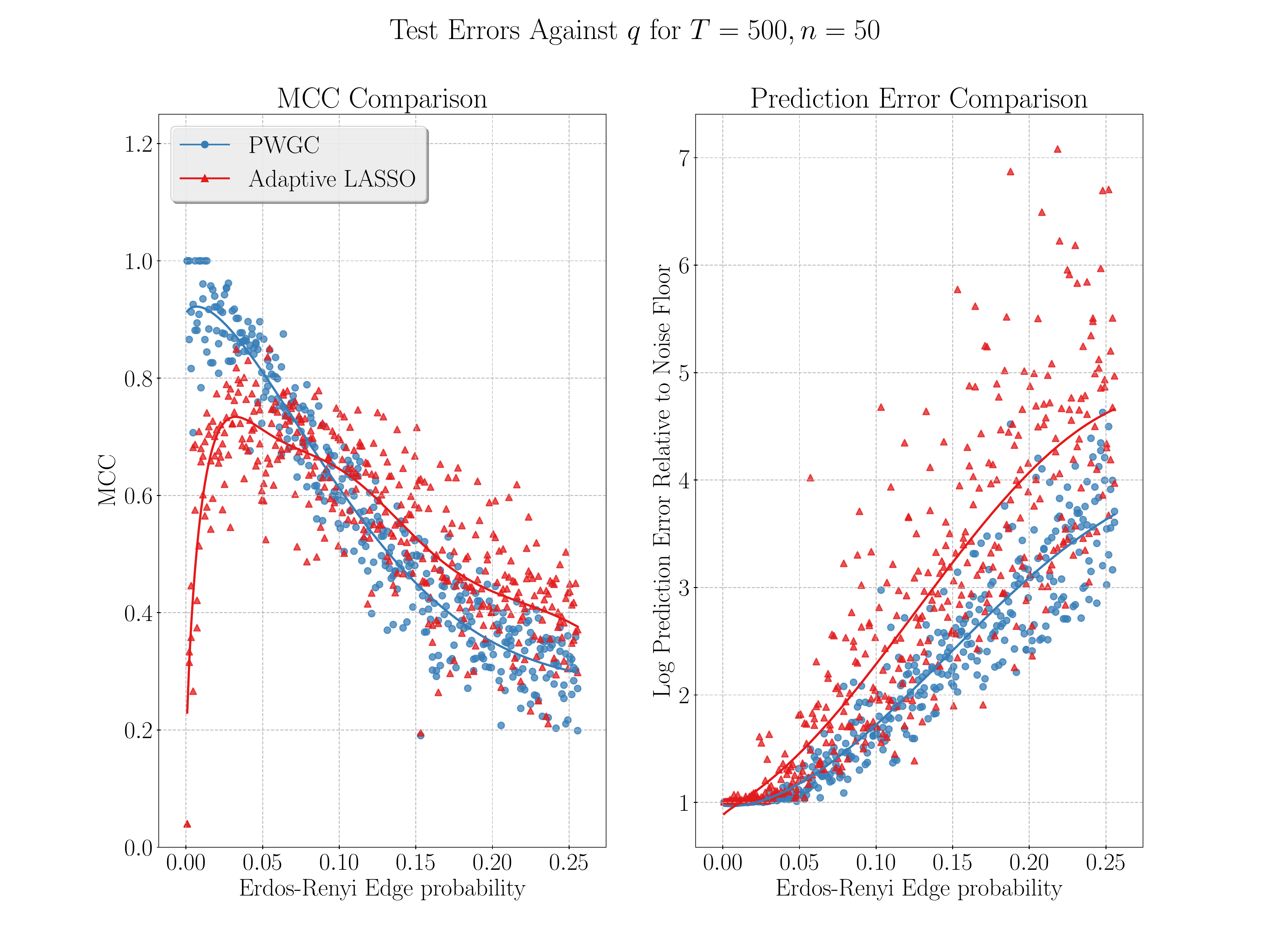}

  {\footnotesize Figure \ref{fig:simulation_results_dense} provides a
    comparison between PWGC and AdaLASSO as the density of graph edges
    (as measured by $q$) increases.  For reference,
    $\frac{2}{n} = 0.04$ has approximately the same level of sparsity
    as the SCGs we simulated.  As $q$ increases, the AdaLASSO
    outperforms PWGC as measured by the MCC.  However, PWGC maintains
    superior performance for 1-step-ahead prediction.  We speculate
    that this is a result of fitting the sparsity pattern recovered by
    PWGC via OLS which directly seeks to optimize this metric, whereas
    the LASSO is encumbered by the sparsity inducing penalty.}
\end{figure}

In reference to Figure \ref{fig:simulation_results_comparison1} it
should not be overly surprising that our PWGC algorithm performs
better than the LASSO for the case of a strongly causal graph, since
in this case the theory from which our heuristic derives is valid.
However, the performance is still markedly superior in the case of a
more general DAG.  We would conjecture that a DAG having a similar
degree of sparsity as an SCG is ``likely'' to be ``close'' to an SCG,
in some appropriate sense.

Figure \ref{fig:simulation_results_dense} illustrates the severe
(expected) degradation in performance as the number of edges increases
while the number of data samples $T$ remains fixed.  For larger values
$q$ in this plot, the number of edges in the graph is comparable to
the number of data samples.

We have also paid close attention to the performance of PWGC in the
very small sample ($T \le 100$) regime (see Figure
\ref{fig:small_T_comparison}), as this is the regime many applications
must contend with.

In regards scalability, we have observed that performing the $O(n^2)$
pairwise Granger causality calculations consumes the vast majority
($> 90\%$) of the computation time.  Since this step is trivially
parallelizable, our algorithm also scales well with multiple cores or
multiple machines.  Figure \ref{fig:simulation_results_scaling} is a
demonstration of this scalability, where we are able to estimate
graphs having over $1500$ nodes (over $2.25 \times 10 ^6$ possible edges)
using only $T = 500$ data points, granted, an SCG on this many nodes
is extremely sparse.

\clearpage
\end{appendices}

\printbibliography
\end{document}